\documentclass[10pt,a4paper]{article}

\usepackage{cite}
\usepackage{graphicx}
\usepackage{amssymb,amsmath,amsthm}
\usepackage{esint}
\usepackage{tikz}
\usepackage{bbm,dsfont,bm}
\usepackage{color}

\theoremstyle{definition}
\newtheorem{theorem}{Theorem}[section]

\newtheorem{lemma}{Lemma}[section]

\newtheorem{definition}{Definition}[section]

\newtheorem{remark}{Remark}[section]
\newtheorem{proposition}{Proposition}[section]

\usepackage{times}
\usepackage{float}
\usepackage{graphicx}

\usepackage{amsmath,amsthm,amssymb,enumerate, fancyhdr}
\usepackage[english]{babel}
\usepackage[colorlinks=true, linkcolor=red]{hyperref}
\usepackage{thmtools}

\newtheorem*{theorem*}{Theorem}

\numberwithin{equation}{section}

\usepackage{graphicx}
\usepackage{tikz}

\usepackage[left=2cm,right=2cm,top=2cm,bottom=2cm]{geometry}
\usepackage{authblk}
\begin{document}
\title{Existence of weak solutions for incompressible fluid– Koiter shell interactions with Navier slip boundary condition}



\author[1,2]{Claudiu M\^{i}ndril\u{a}, Arnab Roy}

\maketitle
   \begin{abstract}
We study a three-dimensional fluid--structure interaction problem describing the motion of an incompressible, viscous fluid coupled with a deformable elastic shell of Koiter type that forms part of the fluid boundary. The fluid motion is governed by the incompressible Navier--Stokes equations posed on a time-dependent domain, while the shell evolution is described by a nonlinear elastic model. At the fluid--structure interface, we impose Navier slip boundary conditions, allowing for tangential slip penalized by friction. Our main result establishes the global-in-time existence of weak solutions up to the first possible self-intersection of the shell, for arbitrarily large initial data with finite energy. The analysis is carried out in a fully three-dimensional setting and addresses the major mathematical challenges arising from the moving domain, the geometric nonlinearity of the shell, and the reduced regularization induced by the slip boundary condition. The proof relies on a careful construction of suitable approximation schemes, novel compactness arguments adapted to the slip framework, and a new extension operator for divergence-free test functions compatible with the fluid--shell coupling. As a further contribution, we provide a direct approach to the strong convergence of second-order spatial derivatives of the shell displacement, which allows us to treat nonlinear Koiter shell models within the same framework. 
\end{abstract}


\section{Introduction}\label{sec:intro}

Fluid-structure interaction (FSI) problems arise in many areas of science and engineering, where the motion of a rigid or deformable structure interacts dynamically with a surrounding fluid. Such problems are inherently nonlinear and multi-physical, combining the analytical challenges of fluid dynamics and elasticity. In this work, we study the coupled evolution of an incompressible, viscous fluid and a deformable elastic cylindrical shell that constitutes part of the boundary of the fluid domain. The fluid occupies a three-dimensional time-dependent domain whose motion is induced by the deformation of the shell. The fluid dynamics are governed by the incompressible Navier-Stokes equations, while the elastic structure is modeled by a Koiter-type shell theory.

A distinguishing feature of our study is the use of \emph{Navier slip} boundary conditions at the fluid–structure interface. In contrast to the classical no-slip condition, Navier slip boundary conditions allow for tangential relative motion along the interface, penalized by a friction term. Such boundary conditions are physically relevant in several regimes, including flows near hydrophobic surfaces, microfluidic applications, and fluid–biological tissue interactions. In particular, recent in- vivo measurements reported in \cite{malek-in-vivo} indicate a slip character of blood flow in vessels; see also \cite{malek2024determining} and \cite{bulicek-malek-rajagopal-sima} for further discussion and experimental evidence.
From a mathematical viewpoint, Navier slip boundary conditions are more subtle than their no-slip counterparts, as they weaken the coupling at the interface and reduce the regularizing effect of the fluid–structure interaction. For incompressible viscous fluids in fixed domains and under suitable regularity assumptions on the boundary, the existence theory is by now well established; see, for example, \cite{bulivcek2007navier,bulivcek2009navier}. More recently, well-posedness results for the Stokes system under Navier slip boundary conditions in irregular domains were obtained in \cite{breit-schwarz-sslip}, see also \cite{gahn2025effectiveinterfacelawsnaviersliptype}.

The mathematical analysis of fluid–structure interaction problems under no-slip boundary conditions has attracted considerable attention in recent years. For viscous incompressible fluids, we refer to \cite{chambolle2005existence,LR14,lengeler2014weak,MS22,cheng2010interaction} and the references therein, while for inviscid fluids we mention, among many other works, \cite{alazard2025global,alazard2025global-arxiv}. In contrast, significantly less is known in the presence of slip boundary conditions, where the weakened coupling at the fluid–structure interface introduces substantial analytical difficulties, particularly in the derivation of compactness and strong convergence properties.
In two spatial dimensions, the existence of weak solutions for fluid–structure interaction problems with Navier slip boundary conditions was established by \v{C}ani\'{c} and Muha in \cite{muha-canic-slip}, while a stochastic counterpart was recently obtained by Tawri in \cite{tawri2024stochastic} using related techniques. In fact, the recent monograph \cite{Canic-book} offers an overview on the subject and the methodology based on the arbitrary lagriangian eulerian method. In particular we refer to \cite[Section 6.3]
{Canic-book} for results related to the slip boundary conditions.

In three spatial dimensions, to our best  knowledge, the only available existence result concerns strong solutions and is proved by Djebour and Takahashi in \cite{takeo-imene-slip}. There, local-in-time existence and uniqueness of strong solutions are obtained, together with global-in-time well-posedness for small initial data.
Recently, Mitra and Schwarzacher \cite{mitra2025thermal} investigated incompressible and heat-conducting fluids governed by the Navier–Stokes–Fourier system interacting with nonlinear Koiter shells under no-slip boundary conditions. Their analysis highlights the delicate role of the pressure in coupled fluid–structure systems and motivates further investigation of slip boundary conditions in thermodynamically consistent settings; see also \cite[Remark~1.1]{mitra2025thermal} and \cite[Subsection~3.2]{bulivcek2009navier}.

Motivated by the above developments, we establish the existence of weak solutions to a coupled incompressible fluid–elastic shell system under Navier slip boundary conditions in three spatial dimensions. The fluid occupies a time-dependent domain whose boundary is determined by the deformation of the shell. The principal analytical challenges arise from the geometric nonlinearity induced by the moving interface, the reduced dissipation caused by the slip condition, and the subtle interaction between the fluid pressure and the elastic response of the shell.
Our approach builds on the existence framework introduced in \cite{LR14}, which has proven effective for a broad class of fluid–structure interaction problems. In particular, this methodology was further developed in \cite{BS18,BS21} to treat compressible, viscous, and heat-conducting fluids, thereby complementing the incompressible setting of \cite{LR14}. However, the presence of Navier slip boundary conditions necessitates substantial new analytical ingredients, especially at the level of compactness and strong convergence.

\paragraph{Novelty and significance.}
Our main result, Theorem~\ref{thm:main}, establishes the existence of a weak solution on any time interval \( I=[0,T] \) with \( T<T_\star \), where \( T_\star\in(0,\infty] \) denotes the first possible time of self-intersection of the shell. The maximal time \( T_\star \) depends on the initial data, which are assumed to have finite energy and to satisfy natural compatibility conditions specified below. Theorem~\ref{thm:main} is proved for a linearized membrane energy (see \eqref{eqn:lame}). We subsequently extend the analysis to a class of nonlinear elastic energies, leading to Theorem~\ref{thm:main-nonlinear}, which covers nonlinear Koiter shell models.

The presence of Navier slip boundary conditions introduces several new analytical difficulties, most notably due to the reduced dissipation and weakened interface coupling. The principal novelties of this work can be summarized as follows:
\begin{itemize}
\item \textbf{New compactness arguments.}  
We develop refined \(L^2\)-compactness techniques in Proposition~\ref{prop:compactness}, extending the framework of \cite{LR14} to the setting of slip boundary conditions.

\item \textbf{Slip-adapted extension operators.}  
We construct a novel extension operator for shell test functions that yields divergence-free test functions defined on the full fluid domain, while preserving the coupling only in the normal direction along the moving interface. This construction is crucial in the presence of slip and is detailed in Proposition~\ref{prop:extension-slip}.

\item \textbf{Strong convergence of second-order shell gradients.}  
We provide a direct proof of the strong convergence of the second spatial derivatives of the shell displacement\footnote{Due to the Piola transform, spatial differentiation naturally produces second-order derivatives of the shell displacement.} in \(L^2(I;L^2)\)-- in Proposition~\ref{prop:2nd-gradients-convergence}.

To the best of our knowledge, such convergence was first obtained in the no-slip setting in \cite{MS22} using difference quotient techniques. Our approach yields a more direct argument, tailored to the slip framework. This result plays a crucial role in extending the analysis to nonlinear elastic energies and, in particular, to nonlinear Koiter shell models, as presented in Section~\ref{sec:nonlinear-koiter} and Theorem~\ref{thm:main-nonlinear}.
\end{itemize}

Let us also emphasize that our model involves \emph{purely elastic} shells, without any additional visco-elastic regularization. This feature represents a significant analytical challenge and distinguishes our approach from previous works on weak solutions with slip boundary conditions, such as \cite{muha-canic-slip} and \cite{tawri2024stochastic}, where additional regularity---typically in the form of visco-elastic damping---is required to ensure Lipschitz continuity  of the change-of-variables mapping between the reference and deformed configurations. In contrast, our analysis does not rely on such artificial regularization mechanisms. For strong solutions, a related result in the presence of Navier slip boundary conditions was obtained by Djebour and Takahashi in \cite{takeo-imene-slip} in the case of an incompressible fluid interacting with a linear elastic plate. Their analysis, however, requires highly regular and sufficiently small initial data, leading to short-time well-posedness results. In comparison, our framework accommodates arbitrarily large initial data, subject only to a finite energy condition and natural compatibility assumptions. The maximal time of existence is constrained solely by the possible onset of geometric degeneracies, namely self-intersections of the shell.

\paragraph{Overview.}The paper is organized as follows. In the remainder of Section~\ref{sec:intro}, we introduce the mathematical setting of the problem, describe the geometry of the reference and evolving domains, and formulate the coupled fluid--structure interaction model. In Section~\ref{sec:weak-form-main-res}, we define the notion of weak (distributional) solutions in Definition~\ref{def:weak-soln}, derive the fundamental a priori estimates, and state our main results, namely Theorem~\ref{thm:main} and Theorem~\ref{thm:main-nonlinear}. Section~\ref{sec:tools} collects the analytical tools and auxiliary results that are repeatedly used throughout the paper. In Section~\ref{sec:proof-main}, we present the main steps of the proof of Theorem~\ref{thm:main}. Finally, Section~\ref{sec:nonlinear-koiter} is devoted to the analysis of nonlinear Koiter shell models and the proof of Theorem~\ref{thm:main-nonlinear}.

\subsection{Geometry}

Let $\Omega \subset \mathbb{R}^{3}$ be a cylinder of radius $R$ and length $L$. We will make use of cylindrical coordinates and we can write
\begin{equation*}
\Omega:=\left\{ \left(x,y,z\right):\ x=r\cos\theta,\ y=r\sin\theta,\ r\in\left[0,R\right),\ \theta\in\left[0,2\pi\right),\ z\in\left(0,L\right)\right\}. 
\end{equation*}
We are interested in the evolution in time of  the \emph{flexible part of the boundary} which will be denoted by
\begin{equation}
\Gamma:=\left\{ \left(x,y,z\right):x=R\cos\theta,y=R\sin\theta,\ \left(\theta,z\right)\in\omega\right\},  
\end{equation}
where $\omega := (0,2\pi)\times (0,L) \subset \mathbb{R}^{2}$  and defines a parametrization of $\Gamma$ with the mapping \[\phi:\omega\mapsto\Gamma\subset\mathbb{R}^{3},\quad\phi\left(\theta,z\right)=\left(R\cos\theta,R\sin\theta,z\right),\ \forall\left(\theta,z\right)\in\omega.\] 

The displacement of the elastic structure is denoted by $\boldsymbol{\eta}:\omega \mapsto \mathbb{R}^{3}$ and we assume it happens in the outer normal/radial direction $\mathbf{e}_{r}(\theta)=(\cos \theta, \sin \theta,0)$  for each $t\in I$, where $I:=[0,T]$ denotes a time horizon of the evolution with $T>0$.
Thus, it suffices to study the radial component of $\boldsymbol{\eta}$. So we write $\boldsymbol{\eta}=\eta \mathbf{e}_{r}$ and we study  the scalar function
\[
\eta: I\times \omega \mapsto \mathbb{R}\]
which is the unknown of the elastic equation from now on. Then, the \emph{deformed boundary} at each $t\in I$ is given by 
\begin{equation}
\Gamma^{\eta}\left(t\right):=\left\{ \left(\left(R+\eta\right)\cos\theta,\left(R+\eta\right)\sin\theta,z\right):\left(\theta,z\right)\in\omega\right\}. 
\end{equation}
The moving boundary $\Gamma^{\eta}(t)$ can be parametrized by considering  
\begin{equation*}
\phi_{\eta\left(t\right)}:\omega\mapsto\mathbb{R}^{3},\ \phi_{\eta\left(t\right)}\left(\theta,z\right):=\left(\left(R+\eta\right)\cos\theta,\left(R+\eta\right)\sin\theta,z\right).
\end{equation*}
This allows us to define the  tangent vectors $\boldsymbol{\tau}_{1}^{\eta},\boldsymbol{\tau}_{2}^{\eta}$ at any point $p=\phi_{\eta\left(t\right)}\left(\theta ,z\right)\in \Gamma ^{\eta}(t)$ with $(\theta, z)\in \omega$ by 
\begin{equation*}
\boldsymbol{\tau}_{1}^{\eta}\left(\theta,z\right):=\partial_{\theta}\phi_{\eta\left(t\right)}\left(\theta,z\right),\quad\boldsymbol{\tau}_{2}^{\eta}\left(\theta,z\right):=\partial_{z}\phi_{\eta\left(t\right)}\left(\theta,z\right).
\end{equation*}
The unit outer normal vector $\boldsymbol{\nu}^{\eta}$ at $p$ is given by 
\begin{equation*}
\boldsymbol{\nu}^{\eta}\left(\theta,z\right):=\frac{\partial_{\theta}\phi_{\eta\left(t\right)}\times\partial_{z}\phi_{\eta\left(t\right)}}{\left|\partial_{\theta}\phi_{\eta\left(t\right)}\times\partial_{z}\phi_{\eta\left(t\right)}\right|}\left(\theta,z\right).
\end{equation*}
Let us now introduce  the \emph{inflow} and \emph{outflow} parts of the boundary
\begin{equation*}
  \Gamma_{in/out}:=\left\{ \left(r\cos\theta,r\sin\theta,z\right):\left(r,\theta\right)\in\left(0,R\right)\times\left(0,2\pi\right),\ z\in\left\{ 0,L\right\} \right\}.
\end{equation*}
They will remain fixed, not changing during the evolution in time. Finally, let us introduce  the moving domains by
\begin{equation*}
\Omega^{\eta}\left(t\right):=\left\{ \left(r\cos\theta,r\sin\theta,z\right)\in\mathbb{R}^{3}:\left(\theta,z\right)\in\omega,0<r<R+\eta\left(t,\theta,z\right)\right\} 
\end{equation*}
which, at all times $t\in I$ will contain an incompressible and viscous fluid. The setting is sketched in Figure~\ref{fig:moving-domains}.
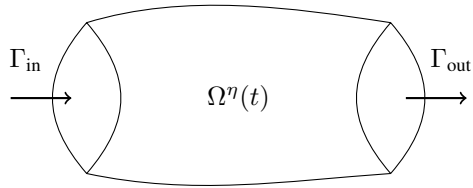
\begin{figure}[H]
    \centering
    \begin{tikzpicture}[scale=1]

\draw
  (0,-1) .. controls (0.6,-0.3) and (0.6,0.3) .. (0,1)
  .. controls (-0.6,0.3) and (-0.6,-0.3) .. (0,-1);

\draw
  (4,-1) .. controls (4.6,-0.3) and (4.6,0.3) .. (4,1)
  .. controls (3.4,0.3) and (3.4,-0.3) .. (4,-1);

\draw
  (0,1)
  .. controls (1.3,1.4) and (2.7,1.2) ..
  (4,1);

\draw
  (0,-1)
  .. controls (1.4,-1.3) and (2.6,-1.1) ..
  (4,-1);

\node at (2,0) {$\Omega^{\eta}(t)$};

\node at (-0.8,0.5) {$\Gamma_{\text{in}}$};
\node at (4.8,0.5) {$\Gamma_{\text{out}}$};

\draw[->, thick] (-1.0,0) -- (-0.2,0);

\draw[->, thick] (4.2,0) -- (5.0,0);

    \end{tikzpicture}

    \caption{The moving domains}
    \label{fig:moving-domains}
\end{figure}

\begin{remark}
    The assumption that the displacement of the shell is restricted to normal or radial direction can be relaxed to include displacements in all three directions, as long as the injectivity of $\phi_{\eta}$ can be guaranteed; see  \cite[p. 6639]{galic-canic-muha} for further details. 
\end{remark}
\subsection{Formulation of the problem}\label{ssec:problem}
Let us denote, here and throughout the work, the time-space domains by  
\[
I\times\Omega^{\eta}:=\bigcup_{t\in I}\left\{ t\right\} \times\Omega^{\eta}\left(t\right).
\]

Let $\mathbf{u}:I\times\Omega^{\eta}\mapsto\mathbb{R}^{3}$ be the velocity field of the fluid and $p:I\times \Omega^{\eta}\mapsto \mathbb{R}$ is the associated pressure field associated to $\mathbf{u}$. We assume the fluid to be homogeneous, of density $\rho_{f}>0$, incompressible, viscous with viscosity $\mu_{f}>0$ and obeying the \emph{Navier-Stokes equations}. This means that we have 

\begin{equation}\label{eqn:fluid}
  \left.\begin{array}{c}
\rho_{f}\left(\partial_{t}\mathbf{u}+\mathbf{u}\cdot\nabla\mathbf{u}\right)=\operatorname{div}\sigma\\
\text{div}\ \mathbf{u}=0
\end{array}\right\} \text{in}\ I\times \Omega^{\eta}
\end{equation}
with $\sigma$ denoting the usual \emph{Cauchy stress tensor} given by the formula 
\begin{equation}
\sigma=\sigma(x,\mathbf{u},p):=2\mu_{f}\frac{\nabla\mathbf{u}+\left(\nabla\mathbf{u}\right)^{T}}{2}-p\mathbb{I}_{3\times3}=:2\mu_{f}\mathbb{D}\mathbf{u}-p\mathbb{I}
\end{equation}
where $\mathbb{D}$ represents the symmetric gradient. 
Throughout this work we shall neglect the problem of the pressure, by dealing with test-functions with divergence zero. 
Once found a velocity field $\mathbf{u}$, the pressure can then be reconstructed by standard methods in the literature. 

The elastic shell is assumed to be a homogeneous medium of given density $\rho_{s}>0$ and of thickness $h>0$. 
We denote its displacement, measured with respect to the (fixed) lateral boundary $\Gamma$, by $\eta:I\times\omega \mapsto \mathbb{R}$ and we assume that it fulfills a Lam\'{e} type equation of the form 
\begin{equation}\label{eqn:lame}
\rho_{s}h\partial_{tt}\eta+\Delta^{2}\eta=f.
\end{equation}
Let us neglect $\rho_{s}$ and $h$ for the moment. The equation~\eqref{eqn:lame} comes as the Euler-Lagrange equation associated to the following energy:
\begin{equation}\label{eqn:energy-linear-shell}
E_{\eta}:=\frac{1}{2}\int_{\omega}\left|\partial_{t}\eta\right|^{2}\ dA+\frac{1}{2 }\int_{\omega}\left|\nabla^{2}\eta\right|^{2}\ dA-\int_{\omega}f\eta\ dA
\end{equation}
and the presence of the  second gradient of $\eta$ is related to a \emph{linearized} model of Koiter type shells, well known in the literature- see e.g. \cite{Ciarlet05} and the references therein. This is a simplified model and the presence of $\int_{\omega} \left\vert\nabla^{2}\eta\right\vert^{2}dA$ in the elastic energy $E_{\eta}$ already contains essential properties of a more general linear self-adjoint, coercive, second order differential operator (with coefficients that depend on the elastic material) denoted $\mathcal{L}_{e}:H_{0}^{2}\mapsto H^{-2}$ such that $\left\langle \mathcal{L}_{e}\left(\eta\right),\eta\right\rangle _{H^{-2}\to H^{2}}\ge c\left\Vert \eta\right\Vert_{H^2}^{2}$ for some $c>0$ and  all $\eta \in H_{0}^{2}(\omega)$ and used in many related works, see e.g. \cite{muha-canic-slip}.

\begin{remark}
Most of the paper, we focus on the linearized model derived from \eqref{eqn:energy-linear-shell} for which we present the existence result of Theorem~\ref{thm:main}. A more general version, called \emph{nonlinear Koiter model} is discussed in more details in Section~\ref{sec:nonlinear-koiter}. Therefore, in a second part of the paper, in Section~\ref{sec:nonlinear-koiter}, we also include the nonlinear model and explain several adjustments that need to be made to obtain Theorem~\ref{thm:main-nonlinear}.
\end{remark}
Regarding the forcing $f$, we assume the so-called \emph{dynamic coupling} boundary condition meaning that $f$ is given by the tension exerted by the fluid through $\sigma$ and evaluated in the direction $-\mathbf{e}_{r}$. This reads as 
\begin{equation}\label{eqn:force-f-shell}
f=-J_{\eta\left(t\right)}\left(x\right)\sigma\left(\phi_{\eta\left(t\right)}\left(x\right),\mathbf{u},p\right)\boldsymbol{\nu}^{\eta}\left(t,x\right)\cdot\mathbf{e}_{r},\quad  x\in\omega
\end{equation}
where we denote $J_{\eta\left(t\right)}:=\left|\partial_{\theta}\phi_{\eta\left(t\right)}\times\partial_{z}\phi_{\eta\left(t\right)}\right|$ the Jacobian of the transformation $\phi_{\eta}$.
By simple computations we obtain  
\begin{equation}\label{eqn:jacobian}
J_{\eta}=\sqrt{\left(R+\eta\right)^{2}\left(1+\left(\partial_{z}\eta\right)^{2}\right)+\left(\partial_{\theta}\eta\right)^{2}}
\end{equation}
and we observe that  $J_{\eta}>0$ if we ensure the condition $\min\limits_{x\in\omega}\left(R+\eta\left(x\right)\right)>0$. 

The problem of evolution after a possible contact (although clearly interesting and widely open) is beyond the aims of this paper, therefore we will focus on the following condition 
 \begin{equation}
     \left\Vert \eta\right\Vert _{L^{\infty}\left(I\times\omega\right)}\le M<R
 \end{equation}
for a certain $M>0$, which will be a consequence of the energy estimates and
which ensures that  $J_{\eta}$ and $J_{\eta}^{-1}$ remain bounded by a constant depending on $M$ and $\left|\nabla\eta\right|$.
Thus, between the area elements of $\Gamma^{\eta}$ and $\Gamma$ we have that
$dA_{\eta}=J_{\eta}dA$.


Let us now discuss the coupling conditions between the fluid and the membrane. As in the title of this work, we assume the so called \emph{Navier-slip boundary conditions}. They were introduced by Navier in \cite{navier1823memoire}. These conditions state that the normal component of the velocity and solid are equal on the moving interface $\Gamma^{\eta}$. Thus we have
\begin{equation}
\mathbf{u}\left(t,\phi_{\eta\left(t\right)}\left(x\right)\right)\cdot\boldsymbol{\nu}^{\eta\left(t\right)}\left(x\right)=\partial_{t}\eta\left(x\right)\mathbf{e}_{r}\left(\theta\right)\cdot\boldsymbol{\nu}^{\eta\left(t\right)}\left(x\right),\quad t\in I,\ x=\left(\theta,z\right)\in\omega.
\end{equation}
The second statement of the Navier's boundary condition states that the amount of slip in the tangential direction is proportional to the tangential part of the normal stress exerted by the fluid on the boundary. We express this in the form of
\begin{equation}
\left(\partial_{t}\eta\mathbf{e}_{r}-\mathbf{u}\left(t,\phi_{\eta\left(t\right)}\right)\right)\cdot\boldsymbol{\tau}_{i}^{\eta\left(t\right)}\left(x\right)=\alpha\cdot\sigma\left(\phi_{\eta\left(t\right)}\right)\boldsymbol{\nu}^{\eta\left(t\right)}\cdot\boldsymbol{\tau}_{i}^{\eta\left(t\right)}\left(x\right)\quad i=1,2
\end{equation}
where $\alpha>0$ is a parameter called \emph{slip length} and which will be fixed throughout this work. Usually $\alpha:\Gamma^{\eta}\left(t\right)\mapsto\left[0,\infty\right)$ with the case $\alpha \equiv 0$ corresponding to the case of \emph{no slip}. However, we will keep $\alpha$ constant throughout this work.
 
Finally, let us specify the boundary conditions at the inflow/outflow regions. We use the so-called \emph{dynamic boundary conditions} which involve the pressure $p$ and given by the formulas 

\begin{equation}
    \left.\begin{array}{c}
\mathbf{u}\cdot\mathbf{\boldsymbol{\tau}}_{i}=0,\ i=1,2\\
\frac{\rho_{f}}{2}\left|\mathbf{u}\right|^{2}+p=P_{in/out}\left(t\right)
\end{array}\right\} \ \text{on}\ \Gamma_{in/out},\ t\in I
\end{equation}
where $\boldsymbol{\tau}_{1,2}$ are the tangent vectors at $\Gamma_{in/out}$ which are represented by $\mathbf{e}_{r}$ (introduced before) and $\mathbf{e}_{\theta}=\left(-\sin\theta,\cos\theta,0\right)$ for  $(\theta, z)\in \omega$. Therefore on $\Gamma_{in/out}$ we have that $\mathbf{u}$ is nonzero only in the normal direction $\mathbf{e}_{z}=(0,0,1)$.

The \emph{forcing} of our system, leading its dynamics is given by prescribing the function $P_{in/out}$  and we denote
\begin{equation}
P=P(t):=P_{in/out}\left(t\right):\left(0, \infty \right)\mapsto\mathbb{R}
\end{equation}

Concerning the elastic shell, we assume that it is \emph{clamped}\footnote{This asumption can be relaxed to $\left|\eta\right|=\left|\partial_{z}\eta\right|=0$ as in \cite{muha-canic-slip}, for example.} at the endpoints, meaning that
\begin{equation}
\left|\eta\right|=\left|\nabla\eta\right|=0\quad\text{at}\ z\in\left\{ 0,L\right\} 
\end{equation}
We assume that at time $t=0$ we are provided with the quantities $\mathbf{u}_{0},\eta_{0},\eta_{1}$ satisfying the following compatibility conditions: 
\begin{equation}\label{eqn:initial-cond}
\begin{cases}
\begin{array}{c}
\mathbf{u}\left(0,\cdot\right)=\mathbf{u}_{0}\in L_{div}^{2}\left(\Omega^{\eta}\left(0\right)\right)\\
\mathbf{u}_{0}\cdot\boldsymbol{\tau}_{1,2}=0\ \text{on}\ \Gamma_{in/out}\\
\mathbf{u}_{0}\left(\phi_{\eta\left(0\right)}\right)\cdot\boldsymbol{\nu}^{\eta\left(0\right)}=\eta_{1}\mathbf{e}_{r}\cdot\boldsymbol{\nu}^{\eta\left(0\right)}\ \text{on}\ \omega\\
\eta\left(0,\cdot\right)=\eta_{0}\in H_{0}^{2}\left(\omega\right),\ \partial_{t}\eta\left(0,\cdot\right)=\eta_{1}\in L^{2}\left(\omega\right)
\end{array}\end{cases}
\end{equation}
We also assume that they have finite energy (see \eqref{eqn:energy-ineq}).

The space $L_{\text{div}}^{2}\left(\Omega^{\eta}\left(0\right)\right)$ is defined via
$L_{\text{div}}^{2}:=\overline{\left\{ \mathbf{v}\in C^{\infty}\left(\Omega^{\eta}\left(0\right);\mathbb{R}^{3}\right),\ \text{div}\ \mathbf{v}=0\right\} }^{\left\Vert \cdot\right\Vert _{2}}$.

\begin{remark}
For simplicity of the exposition we will refer to the $\texttt{data}$ of the problem given by the set 
\[\texttt{data}:=\left\{ R,L,\rho_{f},\mu_{f},\rho_{s},h,\alpha\right\}. \]
We fix $\rho_{f}=\rho_{s}=h=1,\mu_{f}=\frac{1}{2}$  and $\alpha=\alpha_{0}>0$ constant to simplify the computations.
\end{remark}

\paragraph{The fluid-structure interaction (FSI) problem.} 
In this work we aim to find a pair $\left(\mathbf{u},\eta\right)$ as described above which  solves the following system: 
\begin{equation}\label{eqn:FSI-system}
   \begin{cases}
\partial_{t}\mathbf{u}+\left(\mathbf{u}\cdot\nabla\right)\mathbf{u}=\operatorname{div}\sigma & \text{in}\ I\times\Omega^{\eta}\\
\text{div}\mathbf{u}=0 & \text{in}\ I\times\Omega^{\eta}\\
\partial_{tt}\eta+\Delta^{2}\eta=f & \text{on}\ I\times\omega\\
\left|\eta\right|=\left|\nabla\eta\right|=0 & \text{on}\ I\times\partial\omega\\
\frac{1}{2}\left|\mathbf{u}\right|^{2}+p=P(t):=P_{in/out}\left(t\right),\ \mathbf{u}\cdot\boldsymbol{\tau}_{1,2}=0 & \text{on}\ I\times\Gamma_{in/out}\\
\left(\partial_{t}\eta\mathbf{e}_{r}-\mathbf{u}\left(t,\phi_{\eta\left(t\right)}\right)\right)\cdot\boldsymbol{\nu}^{\eta\left(t\right)}=0 & \text{on}\ I\times\omega\\
\left(\partial_{t}\eta\mathbf{e}_{r}-\mathbf{u}\left(t,\phi_{\eta\left(t\right)}\right)-\alpha\ \sigma\left(\phi_{\eta\left(t\right)}\right)\boldsymbol{\nu}^{\eta}\left(t,x\right)\right)\cdot\boldsymbol{\tau}_{1,2}^{\eta}=0 & \text{on}\ I\times\omega\\
\mathbf{u}\left(0,\cdot\right)=\mathbf{u}_{0}, & \text{in \ensuremath{\Omega^{\eta_{0}}}}\\
\eta\left(0,\cdot\right)=\eta_{0},\ \partial_{t}\eta\left(0,\cdot\right)=\eta_{1} & \text{in}\ \omega
\end{cases}
 \tag{FSI}
\end{equation}

\section{Variational formulation and the main results}\label{sec:weak-form-main-res}
Wwe are interested in the study of the \emph{weak} or \emph{variational} solutions to \eqref{eqn:FSI-system}.
Let us assume that all the involved functions in the sequel are smooth.
We multiply the fluid equations \eqref{eqn:fluid} by a function $\mathbf{q}:I\times \Omega^{\eta} \mapsto \mathbb{R}^{3} $ and \eqref{eqn:lame} by $\xi:I\times \omega \mapsto \mathbb {R}$ between which we assume the following compatibility conditions:
\begin{equation}\label{eqn:compatible}
\begin{cases}
\mathbf{q}\left(t,\phi_{\eta}\right)\cdot\boldsymbol{\nu}^{\eta}=\xi\mathbf{e}_{r}\cdot\boldsymbol{\nu}^{\eta} & \text{on}\ I\times\omega\\
\text{div}\mathbf{q}=0 & \text{in}\ I\times\Omega^{\eta}\left(t\right)\\
\mathbf{q}\cdot\boldsymbol{\tau}_{1,2}=0 & \text{on}\ I\times\Gamma_{in/out}.
\end{cases}
\end{equation}

Using Reynolds' transport theorem, we obtain 
\begin{equation}
\begin{aligned}\int_{\Omega^{\eta}\left(t\right)}\partial_{t}\mathbf{u}\cdot\mathbf{q}dx= & \int_{\Omega^{\eta}\left(t\right)}\partial_{t}\left(\mathbf{u}\cdot\mathbf{q}\right)dx-\int_{\Omega^{\eta}\left(t\right)}\mathbf{u}\cdot\partial_{t}\mathbf{q}dx\\
= & \frac{d}{dt}\int_{\Omega^{\eta}\left(t\right)}\mathbf{u}\cdot\mathbf{q}dx-\int_{\Gamma^{\eta}\left(t\right)}\left(\mathbf{u}\cdot\mathbf{q}\right)\left(\partial_{t}\eta\mathbf{e}_{r}\circ\phi_{\eta\left(t\right)}^{-1}\right)\cdot\boldsymbol{\nu}^{\eta\left(t\right)}dA_{\eta\left(t\right)}+\\
 & -\int_{\Omega^{\eta}\left(t\right)}\mathbf{u}\cdot\partial_{t}\mathbf{q}dx.
\end{aligned}
\end{equation}
Concerning the convective term we write, using integration by parts
\begin{equation}
\begin{aligned}\int_{\Omega^{\eta}\left(t\right)}\left(\mathbf{u}\cdot\nabla\right)\mathbf{u}\cdot\mathbf{q}dx= & \frac{1}{2}\int_{\Omega^{\eta}\left(t\right)}\left(\mathbf{u}\cdot\nabla\right)\mathbf{u}\cdot\mathbf{q}dx+\frac{1}{2}\int_{\Omega^{\eta}\left(t\right)}\left(\mathbf{u}\cdot\nabla\right)\mathbf{u}\cdot\mathbf{q}dx\\
= & \frac{1}{2}\int_{\partial\Omega^{\eta}\left(t\right)}\left(\mathbf{u}\cdot\mathbf{q}\right)\left(\mathbf{u}\cdot\boldsymbol{\nu}^{\eta\left(t\right)}\right)dA_{\eta}+\frac{1}{2}\int_{\Omega^{\eta}\left(t\right)}\left(\mathbf{u}\cdot\nabla\right)\mathbf{u}\cdot\mathbf{q}dx\\
= & \frac{1}{2}\int_{\Gamma^{\eta}\left(t\right)}\left(\mathbf{u}\cdot\mathbf{q}\right)\left(\mathbf{u}\cdot\boldsymbol{\nu}^{\eta\left(t\right)}\right)+\frac{1}{2}\int_{\Gamma_{in/out}}\left(\mathbf{u}\cdot\mathbf{q}\right)\left(\mathbf{u}\cdot\boldsymbol{\nu}\right)+\\
 & \frac{1}{2}\int_{\Omega^{\eta}\left(t\right)}-\left(\mathbf{u}\cdot\nabla\right)\mathbf{q}\cdot\mathbf{u}dx+\frac{1}{2}\left(\mathbf{u}\cdot\nabla\right)\mathbf{u}\cdot\mathbf{q}dx\\
= & \frac{1}{2}\int_{\Gamma^{\eta}\left(t\right)}\left(\mathbf{u}\cdot\mathbf{q}\right)\left(\mathbf{u}\cdot\boldsymbol{\nu}^{\eta\left(t\right)}\right)dA_{\eta}+\int_{\Gamma_{in/out}}\frac{\left|\mathbf{u}\right|^{2}}{2}\mathbf{q}\cdot\boldsymbol{\nu}dA+b\left(t,\mathbf{u},\mathbf{u},\mathbf{q}\right)
\end{aligned}
\end{equation}
 where we have denoted 
\begin{equation}
b\left(t,\mathbf{u},\mathbf{v},\mathbf{w}\right):= \int_{\Omega^{\eta}\left(t\right)} \left(\frac{1}{2}\left(\mathbf{u}\cdot\nabla\right)\mathbf{v}\cdot\mathbf{w}- \frac{1}{2}\left(\mathbf{u}\cdot\nabla\right)\mathbf{w}\cdot\mathbf{v}\right) dx
\end{equation}
and used the fact that on $\Gamma_{in/out}$ we have that $\mathbf{q}=q_z\mathbf{e}_{z}$ and $\boldsymbol{\nu}=\pm\mathbf{e}_{z}$.
Then, we have
\begin{equation}
\begin{aligned}\int_{\Omega^{\eta}\left(t\right)}\operatorname{div}\sigma\cdot\mathbf{q}dx= & \int_{\partial\Omega^{\eta}\left(t\right)}\sigma\mathbf{q}\cdot\boldsymbol{\nu}^{\eta\left(t\right)}dx-\int_{\Omega^{\eta}\left(t\right)}\sigma:\mathbb{D}\mathbf{q}dx\\
= & \int_{\Gamma^{\eta}\left(t\right)}\sigma\mathbf{q}\cdot\boldsymbol{\nu}^{\eta\left(t\right)}dA_{\eta}+\int_{\Gamma_{in/out}}\sigma\mathbf{q}\cdot\boldsymbol{\nu}dA-\int_{\Omega^{\eta}\left(t\right)}\sigma:\mathbb{D}\mathbf{q}dx\\
= & \int_{\Gamma^{\eta}\left(t\right)}\sigma\mathbf{q}\cdot\boldsymbol{\nu}^{\eta\left(t\right)}dA_{\eta}+\int_{\Gamma_{in/out}}p\left(\mathbf{q}\cdot\boldsymbol{\nu}\right)dA-\int_{\Omega^{\eta}\left(t\right)}\sigma:\mathbb{D}\mathbf{q}dx,
\end{aligned}
\end{equation}
and further the first summand in the right hand side equals
\begin{equation}
\begin{aligned}\int_{\Gamma^{\eta}}\sigma\mathbf{q}\cdot\boldsymbol{\nu}^{\eta\left(t\right)}dA_{\eta}= & \int_{\Gamma^{\eta}}\sigma\boldsymbol{\nu}^{\eta\left(t\right)}\cdot\mathbf{\mathbf{q}}dA_{\eta}\\
= & \int_{\Gamma^{\eta}}\left(\sigma\boldsymbol{\nu}^{\eta\left(t\right)}\cdot\boldsymbol{\nu}^{\eta\left(t\right)}\right)\left(\mathbf{q}\cdot\boldsymbol{\nu}^{\eta\left(t\right)}\right)+\left(\sigma\boldsymbol{\nu}^{\eta\left(t\right)}\cdot\boldsymbol{\tau}_{1,2}^{\eta\left(t\right)}\right)\left(\mathbf{q}\cdot\boldsymbol{\tau}_{1,2}^{\eta\left(t\right)}\right)dA_{\eta}\\
= & \int_{\Gamma^{\eta}}\left(\sigma\boldsymbol{\nu}^{\eta\left(t\right)}\cdot\boldsymbol{\nu}^{\eta\left(t\right)}\right)\left(\mathbf{q}\cdot\boldsymbol{\nu}^{\eta\left(t\right)}\right)+\frac{1}{\alpha}\left(\partial_{t}\eta\mathbf{e}_{r}\circ\phi_{\eta\left(t\right)}^{-1}-\mathbf{u}\right)\cdot\boldsymbol{\tau}_{1,2}^{\eta\left(t\right)}\left(\mathbf{q}\cdot\boldsymbol{\tau}_{1,2}^{\eta\left(t\right)}\right)dA_{\eta}.
\end{aligned}
\end{equation}
Concerning the elastic shell, by multiplying \eqref{eqn:lame} by $\xi$ and using the definition of $f$ from \eqref{eqn:force-f-shell} we obtain that 
\begin{equation}
  \frac{d}{dt}\int_{\omega}\partial_{t}\eta\xi dA+\int_{\omega}-\partial_{t}\eta\partial_{t}\xi+\nabla^{2}\eta:\nabla^{2}\xi dA=-\int_{\Gamma^{\eta}\left(t\right)}\sigma\mathbf{q}\cdot\boldsymbol{\nu}^{\eta\left(t\right)}dA_{\eta}
\end{equation}
Adding all the computations we obtain formally that
\begin{equation}\label{eqn:formal-weak-formulation}
\begin{aligned}\frac{d}{dt}\int_{\Omega^{\eta}\left(t\right)}\mathbf{u}\cdot\mathbf{q}dx+\int_{\Omega^{\eta}\left(t\right)}-\mathbf{u}\cdot\partial_{t}\mathbf{q}+\mathbb{D}\mathbf{u}:\mathbb{D}\mathbf{q}dx+b\left(t,\mathbf{u},\mathbf{u},\mathbf{q}\right) & +\\
\int_{\Gamma^{\eta}\left(t\right)}-\frac{1}{2}\left(\mathbf{u}\cdot\mathbf{q}\right)\left(\partial_{t}\eta\mathbf{e}_{r}\circ\phi_{\eta\left(t\right)}^{-1}\right)\cdot\boldsymbol{\nu}^{\eta\left(t\right)}dA_{\eta} & +\\
\sum_{i=1}^{2}\int_{\omega}\frac{1}{\alpha}\left(\mathbf{u}\circ\phi_{\eta\left(t\right)}-\partial_{t}\eta\mathbf{e}_{r}\right)_{\tau_{i}^{\eta}}\left(\mathbf{q}\circ\phi_{\eta\left(t\right)}-\xi\mathbf{e}_{r}\right)_{\tau_{i}^{\eta}}J_{\eta\left(t\right)}dA & +\\
\frac{d}{dt}\int_{\omega}\partial_{t}\eta\cdot\xi dA+\int_{\omega}-\partial_{t}\eta\cdot\partial_{t}\xi+\nabla^{2}\eta:\nabla^{2}\xi dA & =\\
\left\langle F\left(t\right),\mathbf{q}\right\rangle 
\end{aligned}
\end{equation}
where we have used the notation $\boldsymbol{\xi}_{\tau_{i}^{\eta}}=\boldsymbol{\xi}\cdot\boldsymbol{\tau}_{i}^{\eta}$ for a generic\footnote{Such as $\mathbf{q}\circ\phi_{\eta\left(t\right)},\ \xi\mathbf{e}_{r}$}, sufficiently smooth function $\boldsymbol{\xi}:\omega\mapsto\mathbb{R}^{3}$  and also
\begin{equation}\label{eqn:notation-F-b}
\begin{aligned}
\left\langle F\left(t\right),\mathbf{q}\right\rangle := & P_{in}\left(t\right)\int_{\Gamma_{in}}\mathbf{q}\cdot\boldsymbol{\nu}dA-P_{out}\left(t\right)\int_{\Gamma_{out}}\mathbf{q}\cdot\boldsymbol{\nu}dA.
\end{aligned}
\end{equation}

\subsection{Formal a-priori estimates}

Let us use the basic admissible test function $(\mathbf{u},\partial_{t}\eta)$ in \eqref{eqn:formal-weak-formulation}  We get the  energy balance 
\begin{equation}\label{eqn:energy-balance}
   \frac{d}{dt}E\left(t\right)+E_{slip}\left(t\right)+D\left(t\right)=\left\langle F\left(t\right),\mathbf{u}\right\rangle \quad t\in I
\end{equation}

where
\begin{equation}\label{eqn:energy-E-D-Eslip}
\begin{aligned}E\left(t\right):= & \frac{1}{2}\int_{\Omega^{\eta}\left(t\right)}\left|\mathbf{u}\left(t,x\right)\right|^{2}dx+\frac{1}{2}\int_{\omega}\left|\partial_{t}\eta\right|^{2}dA+\frac{1}{2}\int_{\omega}\left|\nabla^{2}\eta\right|^{2}dA\\
D\left(t\right):= & \int_{\Omega^{\eta}\left(t\right)}\left|\mathbb{D}\mathbf{u}\right|^{2}dx\\
E_{slip}\left(t\right):= & \frac{1}{\alpha}\int_{\omega}\left|\mathbf{u}\circ\phi_{\eta\left(t\right)}-\partial_{t}\eta\mathbf{e}_{r}\right|^{2}J_{\eta\left(t\right)}dA.
\end{aligned}
\end{equation}
\begin{remark}\label{rmk:steady}
    Due to the fact that the shell is clamped, that is the condition $\left|\eta\right|=\left|\nabla\eta\right|=0$ at $\partial\omega$ it follows that when the $z$-coordinate is very close to $0$ or $L$ we have that the moving boundary $\Gamma ^{\eta}$ is steady in time since $\Gamma^{\eta}\left(t\right)=\left\{ \left(R+\eta\right)\mathbf{e}_{r}+z\mathbf{e}_{z}\right\} =\left\{ R\mathbf{e}_{r}+z\mathbf{e}_{z}\right\} =\Gamma$. Therefore denoting by $\Omega_{s}$ the region of $\Omega$ with $z$ very close to $0$ or $L$ we can see that $\Omega_{s}$ is Lipschitz regular and thus we can estimate 
    \begin{equation}
       \begin{aligned}\left\Vert \text{tr}_{\Gamma_{in/out}}\mathbf{u}\right\Vert _{L_{t}^{2}L_{x}^{2}\left(\Gamma_{in/out}\right)}\lesssim & \left\Vert \mathbf{u}\right\Vert _{L_{t}^{2}L_{x}^{2}\left(\Omega_{s}\right)}+\left\Vert \nabla\mathbf{u}\right\Vert _{L_{t}^{2}L_{x}^{2}\left(\Omega_{s}\right)}\\
\lesssim & \left\Vert \mathbf{u}\right\Vert _{L_{t}^{\infty}L_{x}^{2}\left(\Omega_{s}\right)}+\left\Vert \mathbb{D}\mathbf{u}\right\Vert _{L_{t}^{2}L_{x}^{2}\left(\Omega_{s}\right)}\\
\lesssim & \left(\sup_{t\in I}E\left(t\right)\right)^{1/2}+\left\Vert D\right\Vert _{L_{t}^{2}}
\end{aligned}
    \end{equation}
    where the last inequality is due to Korn's inequality (valid in Lipschitz domains, see \cite{acosta2006weighted} for more details).
    Alternatively, one could also argue using the trace operator from Lemma~\ref{lm:trace}, after extending $\eta$ by zero to $\mathbb{R}^{2}$.
\end{remark}

Now, using H\"{o}lder's inequality, Young's inequality and Remark~\ref{rmk:steady} we obtain  
\begin{equation}
\begin{aligned}E\left(t\right)+\int_{0}^{t}E_{slip}\left(s\right)+D\left(s\right)ds= & E\left(0\right)+\int_{0}^{t}\left\langle F\left(t\right),\mathbf{u}\right\rangle ds\\
\lesssim & E\left(0\right)+\int_{0}^{t}\left|P\left(s\right)\right|\left\Vert \mathbf{u}\left(s\right)\right\Vert _{L_{x}^{2}\left(\Gamma_{in/out}\right)}ds\\
\lesssim & E\left(0\right)+\varepsilon\int_{0}^{t}\left\Vert \mathbf{u}\right\Vert _{L_{x}^{2}\left(\Gamma_{in/out}\right)}^{2}+\frac{1}{4\varepsilon}\int_{0}^{t}P^{2}\left(s\right)ds\\
\lesssim & E\left(0\right)+\varepsilon\int_{0}^{t}\left\Vert \mathbf{u}\right\Vert _{L_{t}^{\infty}L_{x}^{2}\left(\Omega^{\eta}\right)}^{2}+\left\Vert \mathbb{D}\mathbf{u}\right\Vert _{L_{x}^{2}\left(\Omega^{\eta}\right)}^{2}ds+\frac{1}{4\varepsilon}\left\Vert P\right\Vert _{L_{t}^{2}}^{2}\\
\lesssim & E\left(0\right)+\varepsilon\left(\sup_{t\in I}E\left(t\right)+\int_{0}^{T}D\left(s\right)ds\right)+\frac{1}{4\varepsilon}\left\Vert P\right\Vert _{L_{t}^{2}}^{2}
\end{aligned}
\end{equation}
up to a constant depending on $\Omega$, for any $\varepsilon>0$. Since the inequality in the left-hand side is valid for all $t>0$, we may now choose $\varepsilon$ sufficiently small  to get  
\begin{equation}\label{eqn:energy-ineq}
\sup_{t\in \left(0,T\right)}E\left(t\right)+\int_{0}^{T}E_{slip}\left(s\right)+D\left(s\right)ds\lesssim E\left(0\right)+\left\Vert P\right\Vert _{L_{t}^{2}}^{2}.
\end{equation}
From \eqref{eqn:energy-ineq} it follows that, in general, $\eta$ is only H\"{o}lder continuous in both time and space with 
\begin{equation}\label{eqn:holder-eta}
    \eta\in C^{0,1-\alpha}\left(I;C^{0,2\alpha-1}\right),\quad\alpha\in\left(\frac{1}{2},1\right).
\end{equation}
So $\eta$ and  the moving interface $\Gamma^{\eta}$ are \emph{not Lipschitz continuous}, which creates some additional difficulties.

Now, we use Korn's inequality from Proposition~\ref{prop:korn} (with $p=2$). We obtain that for any $r<2$ the following holds:

\begin{equation}\label{eqn:energy}
\sup_{t\in\left(0,T\right)}E\left(t\right)+\int_{0}^{T}E_{slip}\left(t\right)dt+\int_{0}^{T}\int_{\Omega^{\eta}\left(t\right)}\left|\nabla\mathbf{u}\right|^{r}dxdt\lesssim E(0)+\left\Vert P\right\Vert _{L_{t}^{2}}^{2}<\infty 
\end{equation}
up to a constant depending of the \texttt{data}.

\subsection{Traces. Function spaces. Variational formulation.}
\paragraph{Trace operator.} Let us now introduce a notion of trace adapted to the moving domains $\Omega^{\eta}$, even though they are not Lipschitz.
Following \cite{LR14} we have the following
\begin{lemma}\label{lm:trace}
Let $\eta \in H^{2}(\omega)$,  $1<p<\infty$ and $1<r<p$.
Then there exists a linear and continuous \emph{trace operator} given by the formula 
\begin{equation}
\text{tr}_{\eta}:=\text{tr}_{\Gamma^{\eta}}:W^{1,p}\left(\Omega^{\eta}\left(t\right)\right)\mapsto W^{1-\frac{1}{r},r}\left(\omega\right),\quad \text{tr}_{\eta}:\mathbf{q}\mapsto\mathbf{q}\circ\phi_{\eta}
\end{equation}
whose continuity constant depends on $\Omega,r$ and an upper bound for $\left\Vert \eta\right\Vert _{H_{x}^{2}}$.
\end{lemma}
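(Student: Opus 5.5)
The plan is to follow the argument of \cite{LR14}: reduce to a one-dimensional estimate along radial rays, then interpolate. Since $\eta\in H^{2}(\omega)\hookrightarrow C^{0,\gamma}(\overline{\omega})$ for all $\gamma<1$, and $\|\eta\|_{\infty}\le M<R$, the domain $\Omega^{\eta}$ is a subgraph in cylindrical coordinates. First take $\mathbf{q}\in C^{1}(\overline{\Omega^{\eta}})$. For each $(\theta,z)\in\omega$ and some $r_{0}<R-M$, write
\[
\mathbf{q}(\phi_{\eta}(\theta,z))=\mathbf{q}(r_{0},\theta,z)+\int_{r_{0}}^{R+\eta(\theta,z)}\partial_{r}\mathbf{q}(s,\theta,z)\,ds .
\]
Taking $L^{p}(\omega)$ norms and using Hölder's inequality in $s$ (the Jacobian $s\,ds\,d\theta\,dz$ is comparable to Lebesgue measure, since $s\ge r_{0}>0$) gives
\[
\|\text{tr}_{\eta}\mathbf{q}\|_{L^{p}(\omega)}\lesssim\|\mathbf{q}\|_{W^{1,p}(\Omega^{\eta})},
\]
with the first term controlled by the trace on the fixed smooth interior cylinder $\{r=r_{0}\}$. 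This settles the zero-order part.

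For the fractional part, differentiate the composition tangentially:
\[
\partial_{\theta}(\mathbf{q}\circ\phi_{\eta})=(\nabla\mathbf{q})\circ\phi_{\eta}\,\partial_{\theta}\phi_{\eta},
\]
and similarly for $\partial_{z}$. Here $\partial_{\theta}\phi_{\eta}$ involves $\nabla\eta\in W^{1,2}(\omega)\subset L^{s}(\omega)$ for every $s<\infty$, but $(\nabla\mathbf{q})\circ\phi_{\eta}$ is not an $L^{p}$ function on $\omega$. So $W^{1,r}$ of the trace is not available, and the argument must be at fractional level. Following \cite{LR14}, I will prove $\mathbf{q}\circ\phi_{\eta}\in W^{1-1/r,r}$ through the Gagliardo seminorm
\[
\int_{\omega}\int_{\omega}\frac{|\mathbf{q}(\phi_{\eta}(x))-\mathbf{q}(\phi_{\eta}(y))|^{r}}{|x-y|^{1+r}}\,dx\,dy .
\]
The difference splits into three pieces: a radial segment from $\phi_{\eta}(x)$ down to depth $|x-y|$, a horizontal segment at that depth, and a radial segment back up to $\phi_{\eta}(y)$. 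The depth is shifted by $|\eta(x)-\eta(y)|\lesssim|x-y|^{\gamma}$. Each piece is bounded by an integral of $|\nabla\mathbf{q}|$ over a cone-like region of size $|x-y|$. A maximal-function/Hardy-inequality argument then bounds the double integral by $\|\nabla\mathbf{q}\|_{L^{p}}^{r}$. Using $r<p$ gives the integrability margin via Hölder's inequality, which absorbs the non-Lipschitz geometry of the shifted depth (Hölder with exponent $\gamma$ close to $1$). The constants depend only on $\|\eta\|_{H^{2}}$ through $\gamma$-Hölder norms and $M$. Alternatively, one can use a Lipschitz-in-all-but-$\varepsilon$ flattening map $\Psi_{\eta}:\Omega\to\Omega^{\eta}$ (radial stretching, $(r,\theta,z)\mapsto(r(R+\eta)/R,\theta,z)$ near $\Gamma$). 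Its gradient involves $\nabla\eta\in L^{s}$ for all $s<\infty$, so Hölder gives $\mathbf{q}\circ\Psi_{\eta}\in W^{1,r}(\Omega)$ for every $r<p$. The classical trace theorem on the fixed Lipschitz domain $\Omega$ then yields the claim in $W^{1-1/r,r}(\omega)$. I will take this second route as the main one, since it is cleaner.

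Finally, extend to all $\mathbf{q}\in W^{1,p}(\Omega^{\eta})$ by density of smooth functions. Density holds because $\Omega^{\eta}$ is a continuous subgraph, so it has the segment property. Linearity is immediate. The main obstacle is the Hölder step: bounding $|(\nabla\mathbf{q})\circ\Psi_{\eta}\,\nabla\Psi_{\eta}|$ in $L^{r}$ with $\nabla\Psi_{\eta}$ only in $\bigcap_{s<\infty}L^{s}$, together with the Jacobian of $\Psi_{\eta}$ being bounded above and below. The latter holds because $\det\nabla\Psi_{\eta}\sim(R+\eta)/R$, which involves no derivatives of $\eta$.
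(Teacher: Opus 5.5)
Your main route is correct and is the argument of \cite{LR14}, which the paper cites without giving a proof of its own. You pull back by the radial stretching $\Psi_\eta$, whose Jacobian determinant involves $\eta$ but none of its derivatives and whose gradient lies in every $L^s$ since $\nabla\eta\in W^{1,2}(\omega)$; Hölder then gives $\mathbf{q}\circ\Psi_\eta\in W^{1,r}(\Omega)$ for $r<p$, and the classical trace theorem on the fixed cylinder, whose trace on $\Gamma$ is $\mathbf{q}\circ\phi_\eta$, finishes the proof. The zero-order ray estimate and the Gagliardo/cone sketch are superfluous, and for the chain rule interior (Meyers--Serrin) approximation already suffices, because the Hölder bound makes $\mathbf{q}_k\circ\Psi_\eta$ Cauchy in $W^{1,r}(\Omega)$; your constants also correctly depend on $M$ and $p$, which the paper's statement leaves implicit.
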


\paragraph{Function spaces.} Motivated by \eqref{eqn:energy}, we can introduce the Lebesgue function spaces of the type 
\[
L^{p}\left(I;L^{r}\left(\Omega^{\eta}\left(t\right)\right)\right):=\left\{ \mathbf{v}\in L^{1}\left(I\times\Omega^{\eta}\right):t\mapsto\left\Vert \mathbf{v}\left(t,\cdot\right)\right\Vert _{L^{r}\left(\Omega^{\eta}\left(t\right)\right)}\in L^{p}\left(I\right)\right\} 
\]

where we recall the notation $I\times\Omega^{\eta}:=\bigcup_{t\in I}\left\{ t\right\} \times\Omega^{\eta}\left(t\right).$
The Sobolev spaces are defined in an analogous manner. 

Due to the lack of Korn's inequality, we also need to consider the auxiliary spaces 
\[
E^{1,p}\left(\Omega^{\eta}\right):=\left\{ \mathbf{q}:\left\Vert \mathbf{q}\right\Vert _{L^{p}\left(\Omega^{\eta}\right)}+\left\Vert \mathbb{D}\mathbf{q}\right\Vert _{L^{p}\left(\Omega^{\eta}\right)}<\infty\right\} ,\quad p\in\left[1,\infty\right].
\]
\begin{remark}\label{rmk:loss}
    As already pointed out in \eqref{eqn:energy}, using Propositon~\ref{prop:korn} and the Sobolev embedding (see also \cite[Corollary 2.10]{lengeler2014weak}) we see that $E_{x}^{1,2}\hookrightarrow W_{x}^{1,2-}\hookrightarrow L_{x}^{6-}$
    where the notation "$2-$" means any $r<2$. This is why we will denote further norms of the type $\left\Vert \mathbf{u}\right\Vert _{L_{t}^{2}W^{1,2-}},\ \left\Vert \mathbf{u}\right\Vert _{L_{t}^{2}L^{6-}}$ etc. 
\end{remark}

Next, let us consider 
\[
H_{fl}^{\eta\left(t\right)}:=\left\{ \mathbf{u}\in E^{1,2}\left(\Omega^{\eta}\left(t\right)\right):\ \text{div}\ \mathbf{u}=0,\ \text{tr}_{\Gamma_{in/out}}\mathbf{u}\cdot\boldsymbol{\tau}_{1,2}=0\right\}
\]
and the energy spaces for fluid/shell respectively by 
\begin{equation}
    \begin{aligned}V_{fl}^{\eta}:= & L^{\infty}\left(I;L^{2}\left(\Omega^{\eta}\right)\right)\cap L^{2}\left(I;H_{fl}^{\eta}\right)\\
V_{s}:= & W^{1,\infty}\left(I;L^{2}\left(\omega\right)\right)\cap L^{2}\left(I;H_{0}^{2}\left(\omega\right)\right).
\end{aligned}
\end{equation} 
Note that by using the Aubin- Lions theorem we can obtain the compact embedding 
    \begin{equation}\label{eqn:comp-emd-AL}
        V_{s} \hookrightarrow \hookrightarrow L^{\infty}\left(I; W^{1,2}(\omega) \right).
    \end{equation}

We may now introduce the function spaces of \emph{solutions} and  \emph{test-functions} respectively via
\begin{equation}
\begin{aligned}\mathcal{S}^{\eta}:= & \left\{ \left(\mathbf{u},\eta\right)\in V_{fl}^{\eta}\times V_{s}:\left(\mathbf{u}\circ\phi_{\eta}-\partial_{t}\eta\mathbf{e}_{r}\right)\cdot\boldsymbol{\nu^{\eta}}=0\right\} \\
\mathcal{T}^{\eta}:= & \left\{ \left(\mathbf{q},\xi\right)\in\mathcal{S}^{\eta}:\ \partial_{t}\mathbf{q}\in L^{2}\left(I;L^{2}\left(\Omega^{\eta}\right)\right)\right\} .
\end{aligned}
\end{equation}
endowed with the norms 
\[
\left\Vert \left(\mathbf{u},\eta\right)\right\Vert _{\,\mathcal{S}^{\eta}}:=\left\Vert \mathbf{u}\right\Vert _{V_{fl}^{\eta}}+\left\Vert \eta\right\Vert _{V_{s}}
\]
and 
\[\left\Vert \left(\mathbf{q},\xi\right)\right\Vert _{\,\mathcal{T}^{\eta}}:=\left\Vert \left(\mathbf{q},\xi\right)\right\Vert _{\mathcal{S}^{\eta}}+\left\Vert \partial_{t}\mathbf{q}\right\Vert _{L^{2}\left(I;L^{2}\left(\Omega^{\eta}\right)\right)}\] respectively.
\begin{remark}
    The condition $\left(\mathbf{u}\circ\phi_{\eta}-\partial_{t}\eta\mathbf{e}_{r}\right)\cdot\boldsymbol{\nu}^{\eta}=0$ holds pointwise for almost every $(\theta,z)\in \omega$ and almost every $t\in I$. This is due to the low regularity of $\left|\nabla\eta\right|\in L^{\infty}\left(I;W^{1,2}\left(\omega\right)\right)$. 

\end{remark}
\paragraph{Variational formulation of the equations.}
\begin{definition}\label{def:weak-soln}
    We call a couple $\left(\mathbf{u},\eta\right)\in\mathcal{S}^{\eta}$ a \emph{weak-solution} for the FSI-problem \eqref{eqn:FSI-system} provided that
    \begin{enumerate}
        \item For almost every $t\in I=[0,T]$ and for all $\left(\mathbf{q},\xi\right)\in\mathcal{T}^{\eta}$, the following relation holds:
   
    \begin{equation}\label{eqn:weak-formulation}
\begin{aligned}\int_{\Omega^{\eta}\left(t\right)}\mathbf{u}\cdot\mathbf{q}\left(t\right)dx+\int_{0}^{t}\int_{\Omega^{\eta}\left(t\right)}-\mathbf{u}\cdot\partial_{t}\mathbf{q}+\mathbb{D}\mathbf{u}:\mathbb{D}\mathbf{q}dxds & +\\
\int_{0}^{t}b\left(s,\mathbf{u},\mathbf{u},\mathbf{q}\right)ds+\int_{0}^{t}\int_{\Gamma^{\eta}\left(s\right)}-\frac{1}{2}\left(\mathbf{u}\cdot\mathbf{q}\right)\left(\partial_{t}\eta\mathbf{e}_{r}\circ\phi_{\eta\left(t\right)}^{-1}\right)\cdot\boldsymbol{\nu}^{\eta\left(t\right)}dA_{\eta\left(s\right)}ds & +\\
\int_{0}^{t}\int_{\omega}\frac{1}{\alpha}\left(\mathbf{u}\circ\phi_{\eta}-\partial_{t}\eta\mathbf{e}_{r}\right)\cdot\left(\mathbf{q}\circ\phi_{\eta}-\xi\mathbf{e}_{r}\right)J_{\eta}dAds & +\\
\int_{\omega}\left(\partial_{t}\eta\right)\cdot\xi\left(t\right)dA+\int_{0}^{t}\int_{\omega}-\partial_{t}\eta\partial_{t}\xi+\nabla^{2}\eta:\nabla^{2}\xi dAds & =\\
\int_{I}\left\langle F\left(t\right),\mathbf{q}\right\rangle dt+\int_{\Omega^{\eta}\left(0\right)}\mathbf{u}_{0}\cdot\mathbf{q}\left(0\right)dx+\int_{\omega}\eta_{1}\xi dA.
\end{aligned}
\end{equation}
Recall that $J_{\eta}$ was defined in \eqref{eqn:jacobian} and $F$ and $b$ were introduced in \eqref{eqn:notation-F-b}.
\item The energy balance from \eqref{eqn:energy} holds, namely
\begin{equation}
\sup_{t\in\left(0,T\right)}E\left(t\right)+\int_{0}^{T}E_{slip}\left(t\right)dt+\int_{0}^{T}\int_{\Omega^{\eta}\left(t\right)}\left|\nabla\mathbf{u}\right|^{r}dxdt\lesssim E(0)+\left\Vert P\right\Vert _{L_{t}^{2}}^{2}<\infty 
\end{equation}
for any $r<2$. The continuity constant depends on \texttt{data}.
  \end{enumerate}
\end{definition}

\begin{remark} In the derivation of \eqref{eqn:weak-formulation} we used the elementary identity
\[
\sum_{i=1}^{2}\left(\mathbf{u}\circ\phi_{\eta}-\partial_{t}\eta\mathbf{e}_{r}\right)_{\tau_{i}^{\eta}}\left(\mathbf{q}\circ\phi_{\eta}-\xi\mathbf{e}_{r}\right)_{\tau_{i}^{\eta}}=\left(\mathbf{u}\circ\phi_{\eta}-\partial_{t}\eta\mathbf{e}_{r}\right)\cdot\left(\mathbf{q}\circ\phi_{\eta}-\xi\mathbf{e}_{r}\right).
\]
\end{remark}
\subsection{Main results}\label{ssec:main-results}
The main result of this work is the following 
\begin{theorem}\label{thm:main}
Let $P=P(t)\in L^2(0,\infty)$ be an inflow/outflow prescribed source, and let the initial data $\left(\mathbf{u}_{0},\eta_{0},\eta_{1}\right)$ fulfill the conditions \eqref{eqn:initial-cond}. Then there exists a  positive time $T_{\star}=T_{\star}\left(\mathbf{u}_{0},\eta_{0},\eta_{1}\right)$ with $0<T_{\star}\le \infty$ and such that, in any  time interval $I=[0,T]$ with $T<T_{\star}$ the problem \eqref{eqn:FSI-system} admits at least one weak solution in the sense of Definition~\ref{def:weak-soln}. In case no self-contact of the shell occurs, we can take $T_{\star}=\infty$, thus obtaining the global existence of weak solutions.\footnote{With the same proof we can obtain the same result adapted to the case of a 2-dimensional fluid and 1-dimensional elastic beam, recovering the result of \cite{muha-canic-slip}.}
\end{theorem}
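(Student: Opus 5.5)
We follow the Lengeler--R\r{u}\v{z}i\v{c}ka strategy of \cite{LR14}, adapted to slip: decouple, regularize, solve by fixed point, and pass to the limit. For a regularization parameter $\kappa>0$, mollify the geometry by replacing $\eta$ in all geometric quantities ($\Omega^{\eta}$, $\phi_\eta$, $\boldsymbol{\nu}^\eta$, $J_\eta$, the transport term on $\Gamma^\eta$) with a space--time mollification $\mathcal{R}_\kappa\delta$ of a given displacement $\delta\in C(\overline I\times\omega)$ with $\|\delta\|_\infty<R$. With $\delta$ frozen, the system \eqref{eqn:weak-formulation} becomes linear in $(\mathbf{u},\eta)$ on a smooth moving domain; we also regularize the convective term by mollifying the transport field. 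We then solve this linearized problem with a Galerkin scheme. The basis is built from a shell basis of $H^2_0(\omega)$ together with its divergence-free extensions to $\Omega^{\mathcal{R}_\kappa\delta}(t)$, given by the slip-adapted extension operator of Proposition~\ref{prop:extension-slip} (normal coupling only), complemented by fluid modes with vanishing normal trace on $\Gamma^{\mathcal R_\kappa\delta}$. We transport these to the moving domain via a Piola transform so that divergence and normal traces are preserved. The energy identity \eqref{eqn:energy-balance} holds exactly at the Galerkin level, which gives uniform bounds of the form \eqref{eqn:energy-ineq}. Korn's inequality (Proposition~\ref{prop:korn}) is uniform on the smooth regularized domains, and this allows passage to the Galerkin limit.

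Next we close the loop in $\delta$. The map $\delta\mapsto\eta$ sends the set $\{\|\delta\|_{L^\infty}\le M\}$ into a bounded set of $V_s$. On a short interval $[0,T_1]$, chosen via the H\"older continuity \eqref{eqn:holder-eta} so that $\|\eta\|_\infty\le M<R$, this image is compact in $C(\overline I\times\omega)$ by \eqref{eqn:comp-emd-AL}. Schauder's (or Kakutani's, if uniqueness fails) fixed point theorem then yields a solution of the $\kappa$-regularized problem with $\delta=\eta$. The energy estimate \eqref{eqn:energy}, with the constant independent of $\kappa$, allows us to iterate in time up to any $T<T_\star$, where $T_\star$ is the first time $\min(R+\eta)$ would reach $0$. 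Since the bound depends only on the energy, $T_\star=\infty$ when no contact occurs.

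Then we let $\kappa\to0$. The uniform bounds give $\eta_\kappa\rightharpoonup^*\eta$ in $V_s$ and $\eta_\kappa\to\eta$ uniformly. They also give $\mathbf{u}_\kappa\rightharpoonup\mathbf{u}$ weakly in $L^2W^{1,r}$ (extended by zero) and weakly-$*$ in $L^\infty L^2$, together with weak convergence of the slip traces $\mathbf{u}_\kappa\circ\phi_{\eta_\kappa}-\partial_t\eta_\kappa\mathbf{e}_r$ in $L^2(I\times\omega)$. Every term is then linear except three: the convective term $b$, the boundary transport term, and the products $\mathbf{u}\circ\phi_\eta\cdot\xi$ and $|\partial_t\eta|^2$ in the time-derivative parts. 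For these we need strong convergence
\[
\mathbf{u}_\kappa\to\mathbf{u}\ \text{in}\ L^2(I\times\mathbb{R}^3),\qquad \partial_t\eta_\kappa\to\partial_t\eta\ \text{in}\ L^2(I\times\omega),
\]
which is the content of Proposition~\ref{prop:compactness}. The test functions must also be approximated: given $(\mathbf{q},\xi)\in\mathcal{T}^\eta$, we build admissible $(\mathbf{q}_\kappa,\xi)$ for the $\kappa$-problems via Proposition~\ref{prop:extension-slip}, converging strongly. The normal constraint in $\mathcal{S}^\eta$ survives in the limit by strong convergence of $\boldsymbol{\nu}^{\eta_\kappa}$, which follows from $\nabla\eta_\kappa\to\nabla\eta$ uniformly in $L^p$. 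The energy inequality follows by weak lower semicontinuity.

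The main obstacle is the compactness of $\mathbf{u}_\kappa$ and $\partial_t\eta_\kappa$. In the no-slip case the fluid trace equals $\partial_t\eta$, which couples fluid and shell compactness. Here only the normal component is coupled, and $\mathbf{u}_\kappa$ lives on non-Lipschitz moving domains, so Aubin--Lions does not apply directly. We will follow \cite{LR14} and aim for an estimate
\[
\int_I\!\int \mathbf{u}_\kappa\cdot\mathbf{u}_\kappa+\int_I\!\int_\omega|\partial_t\eta_\kappa|^2\;\to\;\int_I\!\int\mathbf{u}\cdot\mathbf{u}+\int_I\!\int_\omega|\partial_t\eta|^2 .
\]
To get it, we test the equation with time-mollified, spatially regularized projections of $(\mathbf{u}_\kappa,\partial_t\eta_\kappa)$ onto divergence-free fields adapted to $\Omega^{\eta_\kappa}$. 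These projections are obtained by squeezing the domain slightly inward, composing with $\phi_{\eta}$, and correcting the normal trace using the extension operator. The commutator errors are controlled by the equicontinuity of $\eta_\kappa$ and by the slip dissipation $E_{slip}$, which gives $L^2$ control of the tangential traces. This Aubin--Lions-type argument in the weak-strong pairing form is where we expect most of the technical work.
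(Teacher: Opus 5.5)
\textbf{Gap: strong convergence of the slip-adapted test functions.} Your architecture (regularized geometry, Galerkin with the slip extension plus Piola-transported fluid modes, fixed point, then $\kappa\to0$ using Proposition~\ref{prop:compactness}) matches the paper. The limit $\kappa\to0$ breaks at the sentence ``we build admissible $(\mathbf{q}_\kappa,\xi)$ \dots via Proposition~\ref{prop:extension-slip}, converging strongly''.

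Since $\mathbb{D}\mathbf{u}_\kappa$ converges only weakly in $L^2$, the diffusive term $\int_I\int\mathbb{D}\mathbf{u}_\kappa:\mathbb{D}\mathbf{q}_\kappa$ needs $\mathbb{D}\mathbf{q}_\kappa\to\mathbb{D}\mathbf{q}$ strongly in $L^2(I\times\mathbb{R}^3)$. But $\mathcal{F}^s_{\mathcal{R}_\kappa\eta_\kappa}(\xi)$ is a Piola transform, so its gradient contains $\nabla^2\eta_\kappa$ times $\xi$ (Lemma~\ref{lm:pointwise-F-s-eta}). The same holds for the Piola-type approximants $\mathcal{J}_{\mathcal{R}_\kappa\eta_\kappa}\mathcal{J}_\eta^{-1}(\mathbf{q}-\mathcal{F}_\eta(\xi))$ that you need to reach a general $(\mathbf{q},\xi)\in\mathcal{T}^\eta$. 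The energy bound gives $\nabla^2\eta_\kappa$ only weakly-$*$ in $L^\infty(I;L^2(\omega))$, so the diffusive term is a product of two merely weakly converging sequences.

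Your list of problematic terms treats this diffusive term as linear and harmless. In the slip setting it is exactly the new difficulty the paper isolates. In the no-slip case it does not arise, because the test-function trace $\xi\mathbf{e}_r$ is independent of the geometry and extensions involving only $\nabla\eta_\kappa$ suffice.

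\textbf{The missing ingredient.} What you need is Proposition~\ref{prop:2nd-gradients-convergence}: $\nabla^2\eta_\kappa\to\nabla^2\eta$ in $L^2(I\times\omega)$. The paper proves it after the fluid/shell compactness, by a norm-convergence argument.
\begin{enumerate}
\item Test the $\kappa$-problem with the no-slip pair $(\mathcal{F}_{\mathcal{R}_\kappa\eta_\kappa}(\eta_\kappa),\eta_\kappa)$ of Proposition~\ref{prop:extension-no-slip}. Its trace on $\Gamma^{\mathcal{R}_\kappa\eta_\kappa}$ is $\eta_\kappa\mathbf{e}_r$, so the slip term vanishes.
\item Its gradient involves only zeroth and first derivatives of $\eta_\kappa$ and $\mathcal{R}_\kappa\eta_\kappa$, which converge strongly.
\item This yields an identity expressing $\int_0^t\int_\omega|\nabla^2\eta_\kappa|^2$ through terms that all converge once $\mathbf{u}_\kappa\to\mathbf{u}$ and $\partial_t\eta_\kappa\to\partial_t\eta$ strongly.
\item Compare with the same identity for the limit, tested with $(\mathcal{F}_\eta(\eta),\eta)$. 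This gives convergence of norms, hence strong convergence.
\end{enumerate}
The fractional bound $\eta_\kappa\in L^2(I;H^{2+s})$ of \cite{MS22} would be an alternative. Only after this step can you pass to the limit with $\mathcal{F}^s$-type and Piola-type test functions.

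\textbf{Smaller point on the fixed point.} You say the frozen-$\delta$ problem ``becomes linear'' while also mollifying the transport field, and you close the loop in $\delta$ alone. If you invoke Kakutani, convexity of the solution sets requires a genuinely linear problem. That means freezing the transport field as a given $\mathbf{v}$ and taking the fixed point in $(\delta,\mathbf{v})$, as the paper does; otherwise you need uniqueness to use Schauder.

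Your compactness route (squeezed domains with normal-trace correction) differs from the paper's. The paper uses Arzel\`a--Ascoli with no-slip extensions, then the decomposition $\mathbf{u}_n=\mathcal{F}_{\eta_n}(\partial_t\eta_n)+\mathcal{J}_{\eta_n}\mathbf{q}_n$ with $\mathbf{q}_n$ of zero normal trace. That difference is not where your argument breaks.
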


The proof of Theorem~\ref{thm:main} will be carried out in several steps, in Section~\ref{sec:proof-main}.

Once this is achieved, we discuss the possibility to include the \emph{nonlinear Koiter} elasticity model as introduced in \cite{MS22}. 
We refer to Section~\ref{sec:nonlinear-koiter} for the precise formulation of the nonlinear Koiter model, which provides a new, nonlinear FSI problem, \eqref{eqn:nonl-system-fsi} -- which is the counterpart of \eqref{eqn:FSI-system}.
In this case, we are able to obtain the analogue of Theorem~\ref{thm:main}. This means:   assume  the same hypothesis as in Theorem~\ref{thm:main} and replace the linear elastic model by the nonlinear Koiter model presented in Section~\ref{sec:nonlinear-koiter}. This results in the new problem \eqref{eqn:nonl-system-fsi}. 
  Then, the same conclusion holds. 

\begin{theorem}\label{thm:main-nonlinear}

 There exists a  positive time $T_{\star}=T_{\star}\left(\mathbf{u}_{0},\eta_{0},\eta_{1}\right)$ with $0<T_{\star}\le \infty$ such that, in any  time interval $I=[0,T]$ with $T<T_{\star}$ the problem \eqref{eqn:nonl-system-fsi} admits at least one weak solution in the sense of Definition~\ref{def:weak-soln}. In case no self-contact of the shell occurs we can take $T_{\star}=+\infty$.
\end{theorem}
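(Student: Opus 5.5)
The plan follows the scheme of Theorem~\ref{thm:main}: build approximate solutions, derive uniform bounds, pass to the limit. The only new ingredient is the nonlinear Koiter energy. Let $K(\eta)$ denote the nonlinear Koiter energy of Section~\ref{sec:nonlinear-koiter}, built from the membrane strain $G(\eta)$ and the bending strain $R(\eta)$. Its first variation $K'(\eta)$ replaces $\Delta^2\eta$, so in \eqref{eqn:weak-formulation} the term $\int_\omega\nabla^2\eta:\nabla^2\xi$ becomes $\langle K'(\eta),\xi\rangle$.

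\textbf{Step 1: approximation and uniform bounds.} I would use the same decoupled/regularized scheme as in Section~\ref{sec:proof-main}: regularize the geometry $\eta\mapsto\eta_\kappa$ and solve a Galerkin problem. The shell part becomes a finite-dimensional ODE with the smooth nonlinearity $K'$. Testing with $(\mathbf{u},\partial_t\eta)$ gives the energy balance \eqref{eqn:energy-balance} with $\frac12\|\nabla^2\eta\|^2$ replaced by $K(\eta)$. Using the coercivity of $K$ on $H_0^2$ as long as $\|\eta\|_\infty\le M<R$ (the bending part controls $\|\nabla^2\eta\|_2^2$ up to lower-order terms handled by the $H^2\hookrightarrow W^{1,p}$ interpolation), one recovers \eqref{eqn:energy-ineq}. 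This gives the same uniform bounds: $\eta$ bounded in $V_s$, $\mathbf{u}$ bounded in $V^\eta_{fl}$, and the slip dissipation bounded. The non-self-contact time $T_\star$ is chosen exactly as before, using \eqref{eqn:holder-eta} and the compactness \eqref{eqn:comp-emd-AL}.

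\textbf{Step 2: compactness.} Proposition~\ref{prop:compactness} yields strong $L^2$ convergence of $(\mathbf{u}_n,\partial_t\eta_n)$. Its proof only uses the energy bounds, the extension operator of Proposition~\ref{prop:extension-slip}, and the structure of the weak formulation, where $\langle K'(\eta_n),\xi\rangle$ is bounded for bounded $\eta_n$ in $H^2$. So that argument transfers unchanged. The fluid terms, the convective term $b$, the boundary term on $\Gamma^\eta$ and the slip term then pass to the limit exactly as in the linear case.

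\textbf{Step 3: the nonlinear elastic term (main obstacle).} $K'(\eta)$ contains products of the form $\nabla^2\eta\cdot\Phi(\eta,\nabla\eta)$ together with terms quadratic in $\nabla^2\eta$, for example $R(\eta)\,R'(\eta)\xi$. Since $\nabla^2\eta_n$ converges only weakly, these terms cannot be passed to the limit directly. Here I would invoke Proposition~\ref{prop:2nd-gradients-convergence}, which gives $\nabla^2\eta_n\to\nabla^2\eta$ strongly in $L^2(I;L^2)$. Its proof must be rechecked for the Koiter operator: it tests the equation with $\eta_n-\eta$, suitably extended by the slip-adapted operator, and uses monotonicity of the leading part. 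That leading part is $\nabla^2\eta:\mathbb{A}(\eta,\nabla\eta)\nabla^2\xi$ with $\mathbb{A}$ uniformly elliptic and continuous in $(\eta,\nabla\eta)$, and $(\eta_n,\nabla\eta_n)$ converge uniformly by \eqref{eqn:comp-emd-AL}. So the difference $\mathbb{A}_n-\mathbb{A}$ tends to zero in $L^\infty$ and the monotonicity argument survives. Once this strong convergence holds, together with uniform convergence of $\eta_n,\nabla\eta_n$, dominated convergence (Vitali, with $L^2$ bounds on the quadratic terms times a smooth $\xi$) gives $\langle K'(\eta_n),\xi\rangle\to\langle K'(\eta),\xi\rangle$ in $L^1(I)$.

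Finally, the energy inequality follows by weak lower semicontinuity; in fact, strong convergence gives $K(\eta_n)\to K(\eta)$. This establishes the weak solution on $[0,T]$ for every $T<T_\star$, with $T_\star=\infty$ if no contact occurs.
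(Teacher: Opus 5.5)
\textbf{The gap is in Step 1, where the coupling is closed.} You keep the full nonlinearity $K'(\eta)$ inside the decoupled problem: ``the shell part becomes a finite-dimensional ODE with the smooth nonlinearity $K'$.''

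\emph{Why this fails.} The scheme of Section~\ref{sec:proof-main}, which you say you follow, recovers $\delta=\eta$ through the set-valued Theorem~\ref{thm:kakutani}. Hypothesis (iii) of that theorem requires the solution set $F(\delta,\mathbf{v})$ to be convex. This is precisely why the convective term is linearized around $\mathbf{v}$, and the shell raises the same obstruction. $\mathbb{G}(\eta)$ is quadratic in $(\eta,\nabla\eta)$, and $\mathbb{R}^{\sharp}(\eta)$ contains $\gamma(\eta)\nabla^{2}\eta$ and $(\partial_{\theta}\eta)^{2}$. So a convex combination of two solutions with the same $(\delta,\mathbf{v})$ is in general not a solution. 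You also have no uniqueness for the nonlinear decoupled problem that would let you switch to a single-valued fixed-point theorem.

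\emph{The missing idea.} This is the key structural modification in the paper's proof of this theorem: freeze the nonlinear coefficients at the given $\delta$. That is, replace $\mathbb{G}(\eta)$ and $\mathbb{R}^{\sharp}(\eta)$ by the tensors $\mathbb{G}_{\delta}(\eta)$ and $\mathbb{R}^{\sharp}_{\delta}(\eta)$ of \eqref{eqn:chg-metric-lin}--\eqref{eqn:chg-curv-lin}, which are affine in $\eta$. This has three consequences:
\begin{enumerate}
\item The decoupled problem becomes affine in $(\mathbf{u},\eta)$, so $F(\delta,\mathbf{v})$ is convex.
\item The $H^{2}$-coercivity of $K_{\delta}$ comes from $\gamma(\delta)>0$, which is built into the convex set $D$, rather than from a bound on the unknown $\eta$.
\item At a fixed point $\delta=\eta$ one recovers $K_{\delta}(\eta)=K(\eta)$ and the nonlinear Koiter term in the weak formulation.
\end{enumerate}

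\textbf{Step 3 is sound but differs from the paper.} Proposition~\ref{prop:2nd-gradients-convergence} neither tests with $\eta_{\varepsilon}-\eta$ nor uses the slip extension. Instead it tests the approximate equation with $(\mathcal{F}_{\mathcal{R}_{\varepsilon}\eta_{\varepsilon}}(\eta_{\varepsilon}),\eta_{\varepsilon})$ and the limit equation with $(\mathcal{F}_{\eta}(\eta),\eta)$, and deduces convergence of $\|\nabla^{2}\eta_{\varepsilon}\|_{L^{2}L^{2}}$. Both test functions use the no-slip extension of Proposition~\ref{prop:extension-no-slip}, which is also the operator used in Proposition~\ref{prop:compactness}, not $\mathcal{F}^{s}$.

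Your difference-testing argument only needs the approximate equations. That is a genuine advantage for the Koiter model: there, obtaining the limit identity first would require passing to the limit in terms quadratic in $\nabla^{2}\eta_{\varepsilon}$. For the nonlinear case the paper alternatively points to the fractional $L^{2}_{t}H^{2+s}_{x}$ bounds of \cite{MS22}.

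One correction: $\nabla\eta_{n}$ does not converge uniformly, because $H^{2}(\omega)$ does not embed into $W^{1,\infty}(\omega)$. From \eqref{eqn:comp-emd-AL} and interpolation you only get convergence in $L^{\infty}(I;L^{p})$ for $p<\infty$. This is harmless here, since the principal coefficient of the Koiter operator, $\gamma(\eta)^{2}\mathcal{A}$, depends on $\eta$ alone.
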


We refer to Section~\ref{sec:nonlinear-koiter} for additional setting and the proof.

\section{Auxiliary tools}\label{sec:tools}
We collect here some results needed in the proof of Theorem~\ref{thm:main}.
\subsection{Korn's inequality in moving H\"{o}lder domains}

One important difference between the \emph{no-slip} and the  \emph{Navier's slip} in moving domains condition concerns the Korn's (in)equality which plays a prominent role in \eqref{eqn:weak-formulation}. 

Let us recall \cite[Lemma A.5]{LR14} which proves that in the case of \emph{no-slip} one can even obtain a \emph{Korn identity}. For all smooth $\mathbf{q}$ which have the form $\text{tr}_{\eta}\mathbf{q}=\xi\mathbf{e}_{r}$ for a $\xi:\omega\mapsto\mathbb{R}$, it holds that
\begin{equation}
\int_{\Omega^{\eta}\left(t\right)}\mathbb{D}\mathbf{u}:\mathbb{D}\mathbf{q}dx=2\int_{\Omega^{\eta}\left(t\right)}\nabla\mathbf{u}:\nabla\mathbf{q}dx.
\end{equation}

This is not the case in the present work. Usual Korn inequalities are formulated in steady and Lipschitz domains, see e.g. \cite{velvcic2012nonlinear} and \cite[Lemma 1]{muha-canic-slip}.
Our domains $\Omega^{\eta}$ are merely of type $C^{0,\alpha}_{x}$ for any $\alpha \in [0,1)$. 

To circumvent this, let us recall the following seminal result of \cite[Theorem 3.1]{acosta2006weighted}:

\begin{theorem}\label{thm:Duran}
Let $\Omega$ be an open, bounded domain, of $\alpha$-H\"{o}lder regularity with $\alpha \in (0,1]$ and $1<p<\infty$. Then it holds that 
\begin{equation}
  \left\Vert d^{1-\alpha}\nabla\mathbf{u}\right\Vert _{L^{p}\left(\Omega\right)}\le C\left(\Omega,p\right)\left(\left\Vert \mathbb{D}\mathbf{u}\right\Vert _{L^{p}\left(\Omega\right)}+\left\Vert \mathbf{u}\right\Vert _{L^{p}\left(\Omega\right)}\right)
\end{equation}
where $d:\Omega\mapsto\mathbb{R}$ represents the distance function from a point $x\in \Omega$ to $\partial\Omega$.
\end{theorem}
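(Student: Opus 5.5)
The plan is the classical Whitney decomposition and chaining argument, adapted to a weight. We decompose $\Omega$ into Whitney cubes $\{Q\}$, so that $\ell(Q)\simeq d(x)$ for $x\in Q$, and we use the expanded cubes $Q^{*}$, which still have bounded overlap. We fix a central cube $Q_0$. On each expanded cube the scale-invariant Korn inequality for cubes gives a constant skew-symmetric matrix $A_Q$ with
\[
\left\Vert \nabla\mathbf{u}-A_{Q}\right\Vert _{L^{p}(Q^{*})}\le C\left\Vert \mathbb{D}\mathbf{u}\right\Vert _{L^{p}(Q^{*})},
\]
where $C$ depends only on $p$ (and the dimension). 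On $Q_0$ we use the second Korn inequality to get $|A_{Q_0}|\lesssim \Vert\mathbb{D}\mathbf{u}\Vert_{L^p(\Omega)}+\Vert\mathbf{u}\Vert_{L^p(\Omega)}$; this is the only place where the lower-order term $\Vert\mathbf{u}\Vert_{L^p}$ enters. It is then enough to bound $\sum_Q \ell(Q)^{(1-\alpha)p}|Q|\,|A_Q|^p$.

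Next we exploit the $\alpha$-Hölder geometry. As in the work of Acosta--Durán--Muschietti, we build a tree structure on the Whitney cubes, rooted at $Q_0$. Each cube $Q$ is joined to $Q_0$ by a chain of neighbouring cubes (its ancestors), and neighbouring cubes have comparable size. For a cube $Q'$, its shadow $S(Q')$ is the union of all cubes having $Q'$ as an ancestor. The Hölder regularity gives the geometric shadow estimate
\[
\operatorname{diam} S(Q')\lesssim \ell(Q')^{\alpha}.
\]
For neighbours $Q,Q'$, the cubes $Q^{*}$ and $Q'^{*}$ overlap in a set of measure comparable to $|Q|$. Comparing the two local Korn estimates on this overlap gives
\[
|A_Q-A_{Q'}|\lesssim |Q|^{-1/p}\Vert \mathbb{D}\mathbf{u}\Vert_{L^p(Q^{*}\cup Q'^{*})}.
\]
Telescoping along the chain then yields
\[
|A_Q|\le |A_{Q_0}|+\sum_{Q'\succeq Q} g(Q'),\qquad g(Q'):=|Q'|^{-1/p}\Vert\mathbb{D}\mathbf{u}\Vert_{L^p(Q'^{*})}.
\]

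The main obstacle is the resulting discrete weighted Hardy inequality on the tree. We need
\[
\sum_{Q}\ell(Q)^{(1-\alpha)p}|Q|\Big(\sum_{Q'\succeq Q} g(Q')\Big)^p\lesssim \sum_{Q'} |Q'|\,g(Q')^p .
\]
I would prove it by the known characterization of Hardy inequalities on trees: it suffices to check the test condition
\[
\sup_{Q'}\Big(\sum_{Q\in S(Q')}\ell(Q)^{(1-\alpha)p}|Q|\Big)^{1/p}\Big(\sum_{Q''\succeq Q'}|Q''|^{-p'/p}\Big)^{1/p'}<\infty .
\]
For the first factor, cubes in $S(Q')$ have $\ell(Q)\lesssim\ell(Q')^{\alpha}$, so the sum is $\lesssim \ell(Q')^{\alpha(1-\alpha)p}\,\ell(Q')^{3\alpha}$ by the shadow estimate. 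The second factor is a geometric sum along the ancestors and is dominated by $\ell(Q')^{-3/p}$. The exponent $1-\alpha$ of the weight is exactly what is needed to balance the powers of $\ell(Q')$ coming from the shadow diameter $\ell(Q')^{\alpha}$; I expect this bookkeeping of exponents, and the construction of chains satisfying the shadow estimate, to be the delicate part.

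Finally, we combine the local estimates with the bound on the $A_Q$. Bounded overlap of the $Q^{*}$ gives
\[
\Vert d^{1-\alpha}(\nabla\mathbf{u}-A_Q)\Vert_{L^p(\cup Q)}\lesssim\Vert\mathbb{D}\mathbf{u}\Vert_{L^p(\Omega)},
\]
using $d\lesssim 1$ on the bounded domain $\Omega$. The weighted Hardy bound controls the part carried by the $A_Q$, and together these give the claim with $C=C(\Omega,p)$.
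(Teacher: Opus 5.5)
Your Whitney decomposition, the local Korn inequality on $Q^{*}$, the bound on $A_{Q_0}$ and the telescoping estimate $|A_Q|\le|A_{Q_0}|+\sum_{Q'\succeq Q}g(Q')$ are fine. The gap is in the step that carries the whole proof, the weighted Hardy inequality on the tree, which is not established.

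First, the reduction is invalid. Write $w(Q)=\ell(Q)^{(1-\alpha)p}|Q|$ and $v(Q)=|Q|$. Boundedness of the single-box quantity
\[
\sup_{Q'}\Big(\sum_{Q\in S(Q')}w(Q)\Big)^{1/p}\Big(\sum_{Q''\succeq Q'}v(Q'')^{1-p'}\Big)^{1/p'}
\]
is necessary, but on a branching tree it is not sufficient. The characterizations of Evans--Harris--Pick and Arcozzi--Rochberg--Sawyer need a capacity-type condition, such as $\sum_{Q''\in S(Q')}\big(\sum_{S(Q'')}w\big)^{p'}v(Q'')^{1-p'}\lesssim\sum_{S(Q')}w$. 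A counterexample: take $p=2$, $v\equiv1$, and a binary tree with weight only at depth $N=2^K$. Arrange it so that, for depths $j\in[2^k,2^{k+1})$, exactly $2^k$ nodes have shadow mass $2^{-k}$. The single-box quantity stays bounded, but testing the dual inequality with $g\equiv1$ gives $\sum_x\big(\sum_{S(x)}w\big)^2\approx K\to\infty$.

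Second, your bounds do not even give the test condition. They yield $\ell(Q')^{\alpha(1-\alpha)+3\alpha/p}\,\ell(Q')^{-3/p}=\ell(Q')^{(1-\alpha)(\alpha-3/p)}$. This blows up as $\ell(Q')\to0$ whenever $p<3/\alpha$, in particular for $p=2$, the case used in the paper. Both geometric inputs are inadequate.

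\begin{itemize}
\item The bound $\operatorname{diam}S(Q')\lesssim\ell(Q')^{\alpha}$ is too weak. If a shadow really contained a cube of side $\ell(Q')^{\alpha}$, the (necessary) single-box quantity would be unbounded for $p<3/\alpha$, and the Hardy inequality would fail. What you need is that $S(Q')$ lies in a column of width $\sim\ell(Q')$ and height $\lesssim\ell(Q')^{\alpha}$, and contains only cubes of side $\lesssim\ell(Q')$. This holds for chains running vertically in a graph chart, and gives $|S(Q')|\lesssim\ell(Q')^{2+\alpha}$.
\item The ancestor sum is not geometric in a H\"older domain; this is exactly where it differs from a John domain. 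Along the axis of an outward cusp $\{x_3>|x'|^{\alpha}\}$, a chain meets $\sim t^{1-1/\alpha}$ cubes of side $\sim t^{1/\alpha}$ between heights $t$ and $2t$. Hence $\sum_{Q''\succeq Q'}|Q''|^{-p'/p}$ is of order $\ell(Q')^{\alpha-1-3p'/p}$, not $\ell(Q')^{-3p'/p}$.
\end{itemize}

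With the corrected bounds, the single-box quantity is $\lesssim\ell(Q')^{(1-\alpha)(1-1/p-1/p')}=1$, and this order is attained at the cusp. It is exactly critical, so there is no slack to absorb the branching, and a genuine proof of the Hardy-type inequality is still missing.

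To close the gap you have two options. One is to verify a capacity-type condition as above, using the column structure of the shadows. The other is to follow the route of \cite{acosta2006weighted}, from which the paper simply quotes this result: prove a weighted Poincar\'e inequality on H\"older-$\alpha$ domains, then derive the weighted Korn inequality from it. A natural way to do the derivation is through the identity $\partial_k\omega_{ij}=\partial_j(\mathbb{D}\mathbf{u})_{ik}-\partial_i(\mathbb{D}\mathbf{u})_{jk}$, where $\omega_{ij}=\frac12(\partial_ju_i-\partial_iu_j)$.
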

Now with the aid of Theorem~\ref{thm:Duran}
one can obtain the result of \cite[Proposition 2.9]{lengeler2014weak}. For the sake of completeness, we include it here:
\begin{proposition}\label{prop:korn}
    Let $1<p<\infty$ and $1\le r<p$. For any $t\in I$ and for all the smooth functions $\mathbf{q}\in C^{\infty}\left(\Omega^{\eta}\left(t\right)\right)$ it holds that 
    \begin{equation}
        \left\Vert \nabla\mathbf{q}\right\Vert _{L_{x}^{r}}\lesssim\left\Vert \mathbb{D}\mathbf{q}\right\Vert _{L_{x}^{p}}+\left\Vert \mathbf{q}\right\Vert _{L_{x}^{p}}.
    \end{equation}
    The continuity constant depends on $\Omega,p,r$.
\end{proposition}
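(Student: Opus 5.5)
The plan is to combine the weighted Korn inequality of Theorem~\ref{thm:Duran} with Hölder's inequality applied to a negative power of the distance function. The key point is that the boundary singularity of $\nabla\mathbf{q}$ allowed by Theorem~\ref{thm:Duran} is only of order $d^{\alpha-1}$. The exponent $\alpha<1$ can be chosen as close to $1$ as we like, so this singularity is integrable in $L^{r}$ for every $r<p$.

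\textbf{Step 1: Hölder regularity of $\Omega^{\eta}(t)$.} Fix $t\in I$. Since $\eta(t)\in H^{2}_{0}(\omega)$ and $\omega$ is two-dimensional, $\eta(t)\in C^{0,\beta}(\overline\omega)$ for every $\beta<1$, with norm controlled by $\|\eta\|_{L^\infty_tH^2_x}$. We also have $\|\eta\|_{L^\infty}\le M<R$. Hence the graph $r=R+\eta(t,\theta,z)$ is a $C^{0,\beta}$ boundary, and the lateral part is well separated from the axis. The flat caps $\Gamma_{in/out}$ meet the lateral boundary in a Lipschitz fashion, because $\eta$ vanishes near $z\in\{0,L\}$ (Remark~\ref{rmk:steady}). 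Therefore $\Omega^{\eta}(t)$ is a bounded $\beta$-Hölder domain. The Hölder constants are uniform in $t$, depending only on $\Omega$, $M$ and the energy bound. Consequently the constant $C(\Omega^{\eta}(t),p)$ in Theorem~\ref{thm:Duran} can be taken uniform. This uniformity is the step I expect to be the main obstacle: one must check in the proof of \cite{acosta2006weighted} that the constant depends only on the Hölder character (covering by finitely many Hölder graphs with controlled constants), not on the specific domain.

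\textbf{Step 2: Hölder's inequality with the weight.} Write $\nabla\mathbf{q}=d^{\beta-1}\cdot d^{1-\beta}\nabla\mathbf{q}$. Take $s$ with $\frac1r=\frac1p+\frac1s$. Then
\[
\|\nabla\mathbf{q}\|_{L^{r}}\le \|d^{\beta-1}\|_{L^{s}(\Omega^{\eta}(t))}\,\|d^{1-\beta}\nabla\mathbf{q}\|_{L^{p}(\Omega^{\eta}(t))}.
\]
The second factor is bounded by $C(\|\mathbb{D}\mathbf{q}\|_{L^p}+\|\mathbf{q}\|_{L^p})$ by Theorem~\ref{thm:Duran}.

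\textbf{Step 3: integrability of $d^{(\beta-1)s}$.} It remains to choose $\beta$ so that $(1-\beta)s<1$, that is, $\beta>1-\frac1s=1-\frac1r+\frac1p$. Such a $\beta<1$ exists precisely because $r<p$. We then show that $\int_{\Omega^{\eta}(t)}d^{-\gamma}\,dx<\infty$ for $\gamma=(1-\beta)s<1$. Near the lateral boundary, $d(x)$ is comparable from below, in a suitable sense, to the radial distance $R+\eta-r$ along rays. More robustly, we use the coarea/layer-cake bound $|\{x\in\Omega^{\eta}(t):d(x)<\varepsilon\}|\lesssim\varepsilon^{\beta}$ for Hölder graph domains. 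Combined with $\beta>\gamma$, which we may also impose since both can be taken close to $1$, this gives
\[
\int d^{-\gamma}\,dx\lesssim\sum_k 2^{k\gamma}\,|\{d<2^{-k}\}|\lesssim\sum_k 2^{k(\gamma-\beta)}<\infty.
\]
In fact the thin-layer estimate $|\{d<\varepsilon\}|\lesssim\varepsilon$ holds for graphs with an $L^{\infty}$-controlled surface measure, such as $W^{1,2}$-graphs; the weaker $\varepsilon^{\beta}$ bound suffices in any case. The resulting constant depends only on $\Omega$, $p$, $r$ and the uniform bounds on $\eta$, which proves the proposition for smooth $\mathbf{q}$.
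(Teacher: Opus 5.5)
Your proposal is correct and follows essentially the same route as the paper. Both use the weighted Korn inequality of Theorem~\ref{thm:Duran} on the $C^{0,\beta}$ domain $\Omega^{\eta}(t)$, H\"{o}lder's inequality with $\frac{1}{s}=\frac{1}{r}-\frac{1}{p}$, and integrability of $d^{(\beta-1)s}$ once $(1-\beta)s<1$, which can be arranged precisely because $r<p$. Your dyadic layer-cake bound $|\{d<\varepsilon\}|\lesssim\varepsilon^{\beta}$ justifies that last step more carefully than the paper's coarea sketch, which tacitly assumes the level sets $\{d=\theta\}$ have uniformly bounded area. The uniformity of the Korn constant in $t$ that you flag is also left implicit in the paper.
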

\begin{proof}
    Since according to \eqref{eqn:holder-eta} we have that $\partial\Omega^{\eta}\left(t\right)\in C^{0,\beta}$ for \emph{all} $\beta \in (0,1)$ we can use Theorem~\ref{thm:Duran} and H\"{o}lder's inequality for $s$ with $\frac{1}{s}=\frac{1}{r}-\frac{1}{p}$ to get that 
    $\left\Vert \nabla\mathbf{q}\right\Vert _{L_{x}^{r}}\le\left\Vert d^{1-\beta}\nabla\mathbf{q}\right\Vert _{L_{x}^{p}}\left\Vert d^{\beta-1}\right\Vert _{L_{x}^{q}}$. Now we only need to bound $\left\Vert d^{\beta-1}\right\Vert _{L_{x}^{s}}$. For this let us assume that $\left\Vert \eta\right\Vert _{L_{x}^{\infty}}\le R/2$ and we write 
    \[
    \int_{\Omega^{\eta}\left(t\right)}d^{\left(\beta-1\right)s}=\int_{\left\{ x\in\Omega^{\eta}\left(t\right):d\left(x\right)<R\right\} }d^{\left(\beta-1\right)s}+\int_{\left\{ x\in\Omega^{\eta}\left(t\right):d\left(x\right)\ge R\right\} }d^{\left(\beta-1\right)s}.
    \]
The upper bound of the second integral is obvious, while for the first one we have 
\[
\int_{0}^{R}\int_{\left\{ x\in\Omega^{\eta}\left(t\right):d\left(x\right)=\theta\right\} }d^{\left(\beta-1\right)s}dxd\theta\lesssim\int_{0}^{R}\theta^{\left(\beta-1\right)s}d\theta<\infty
\]
    provided that, $\left(\beta-1\right)s+1>0$ or that $s<\frac{1}{1-\beta}$ which can be ensured.
\end{proof}

\subsection{The Piola transform}
Let us now introduce the well known \emph{Piola transform}, denoted by $\mathcal{J}_{\eta}$. It allows us to extend functions from $f:\Omega\mapsto\mathbb{R}^{3}$ to $\mathcal{J}_{\eta}f:\Omega^{\eta}\mapsto\mathbb{R}^{3}$ such that $\text{div}\ f=0$ implies $\text{div}\ \mathcal{J}_{\eta}f=0$. Moreover, corresponding zero boundary conditions on the trace are also preserved, we present the details below.

Let us consider an arbitrary displacement, say  $\eta \in C^{2}(\omega)$ with 
\begin{equation}\label{eqn:cond-eta-M}
    \left\Vert \eta\right\Vert _{L_{x}^{\infty}}\le M <R
\end{equation}
and extend it in radial direction by setting 
$\tilde{\eta}=\rho\left(r\right)\eta$
where $\rho\in C^{\infty}\left(0,R\right)$ is a fixed function such that 
\begin{equation}\label{eqn:cond-rho}
\rho=\rho_{a,b}\left(r\right)=\begin{cases}
0 & r\in\left(0,a\right)\\
1 & r\in\left(R-b,R\right)
\end{cases}, \quad 0\le\rho^{\prime}<\frac{1}{M}
\end{equation}
for some $a,b>0$ with $a<R-b$.
Note that  although all the further estimates will depend on $\rho$, we will not mention this dependence once $\rho$ is fixed.

This allows us now consider the following mapping from the reference configuration $\Omega$ to the deformed configuration $\Omega^{\eta}=\Omega^{\eta}\left(t\right)$ defined via
\[
\psi_{\eta}:\Omega\mapsto\Omega^{\eta}\left(t\right),\quad\psi_{\eta}\left(r,\theta,z\right)=\left(r+\tilde{\eta}\right)\mathbf{e}_{r}+z\mathbf{e}_{z}, \quad r\in\left(0,R\right),\ \left(\theta,z\right)\in\omega.
\]
One can check easily that $\psi_{\eta}$ is a homeomorphism and even a diffeomorphism.

Then, we can define the \emph{Piola transform}, denoted by $\mathcal{J}_{\eta}$, which assigns to each $\phi:\Omega\mapsto\mathbb{R}^{3}$  the mapping
\begin{equation}\label{eqn:Piola}
\mathcal{J}_{\eta}\phi:=\left(\frac{\nabla\psi_{\eta}}{\det\nabla\psi_{\eta}}\phi\right)\circ\psi_{\eta}^{-1}:\Omega^{\eta}\mapsto\mathbb{R}^{3}
\end{equation}

For each $1\le p \le \infty$ and $1\le r<p$ we have that 
$\mathcal{J}_{\eta}:W^{1,p}\left(\Omega\right)\mapsto W^{1,r}\left(\Omega^{\eta}\right)$
is linear, continuous  and one-to-one.
\begin{lemma}\label{lm:Piola-est}
    We have the following pointwise estimates
    \begin{equation}
\begin{aligned}\left|\mathcal{J}_{\eta}\phi\right|\lesssim & \left(1+\left|\eta\right|+\left|\nabla\eta\right|\right)\left|\phi\right|\\
\left|\nabla\mathcal{J}_{\eta}\phi\right|\lesssim & \left(1+\left|\eta\right|+\left|\nabla\eta\right|\right)^{3}\left|\phi\right|+\left|\nabla^{2}\eta\right|\left|\phi\right|+\left(1+\left|\eta\right|+\left|\nabla\eta\right|\right)\left|\nabla\phi\right|
\end{aligned}
    \end{equation}
    with a constant depending on $\Omega,M$.
    
    In particular, the Piola mapping is a continuous operator from $L^{p}\left(\Omega\right)$ to $L^{r}\left(\Omega^{\eta}\right)$ and from $W^{1,p}\left(\Omega\right)$ to $W^{1,r}\left(\Omega^{\eta}\right)$ for any $1<p<\infty $ and $1\le r<p$. The loss of regularity comes from the fact that $\nabla\eta\notin L_{x}^{\infty}$. 
\end{lemma}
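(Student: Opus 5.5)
We compute $\nabla\psi_\eta$ explicitly in cylindrical coordinates and then differentiate the Piola formula \eqref{eqn:Piola} by the chain rule.

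\textbf{Step 1: the deformation gradient.} Write $\psi_\eta=(r+\rho(r)\eta(\theta,z))\mathbf{e}_r+z\mathbf{e}_z$. In the orthonormal frame $(\mathbf{e}_r,\mathbf{e}_\theta,\mathbf{e}_z)$, $\nabla\psi_\eta$ is lower-triangular-like. Its radial row is $(1+\rho'\eta,\ \rho\partial_\theta\eta/r,\ \rho\partial_z\eta)$. Its angular entry is $(r+\rho\eta)/r$, and the $z$ entry is $1$. Hence
\[
\det\nabla\psi_\eta=(1+\rho'\eta)\frac{r+\rho\eta}{r}.
\]
By \eqref{eqn:cond-rho} and \eqref{eqn:cond-eta-M}, we have $1+\rho'\eta\ge 1-\rho'M>0$. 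Since $\rho=0$ for $r<a$, the factor $(r+\rho\eta)/r$ equals $1$ there, and for $r\ge a$ it is bounded below by a positive constant depending on $a,R,M$. So $\det\nabla\psi_\eta\ge c(\Omega,M,\rho)>0$ uniformly.

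Each entry of $\nabla\psi_\eta$ is bounded by $C(1+|\eta|+|\nabla\eta|)$. Here $1/r$ is harmless because $\rho$ vanishes near the axis, and a uniform bound on $|\eta|\le M$ may be absorbed. This gives the first estimate $|\mathcal{J}_\eta\phi|\lesssim(1+|\eta|+|\nabla\eta|)|\phi|$, evaluated at $\psi_\eta^{-1}(x)$.

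\textbf{Step 2: gradient bound.} The chain rule gives
\[
\nabla(\mathcal{J}_\eta\phi)=\Big[\nabla\Big(\tfrac{\nabla\psi_\eta}{\det\nabla\psi_\eta}\phi\Big)\Big](\nabla\psi_\eta)^{-1}\circ\psi_\eta^{-1}.
\]
We bound each factor in turn.

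1. The inverse $(\nabla\psi_\eta)^{-1}=\operatorname{cof}(\nabla\psi_\eta)^T/\det$. Inspecting the triangular structure, only the entries $\rho\partial_\theta\eta/r$ and $\rho\partial_z\eta$ are unbounded, and they enter linearly. Hence $|(\nabla\psi_\eta)^{-1}|\lesssim 1+|\nabla\eta|$.
2. The term $\nabla(\nabla\psi_\eta)$ contains $\nabla^2\eta$ linearly, together with terms of type $\rho''\eta$, $\rho'\nabla\eta$ and $\nabla\eta$ (the last arising from derivatives of $1/r$ and of the frame vectors).
3. The term $\nabla(\det)^{-1}=-\nabla\det/\det^2$ is bounded by $|\nabla\eta|$-type terms, since $\det$ involves only $\eta$ and $\rho'$, with no gradient of $\eta$.

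Collecting these, the products are of the following types:

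- $(1+|\nabla\eta|)\cdot$ (first derivatives of the matrix) $\cdot|\phi|$, which can be organized as $(1+|\eta|+|\nabla\eta|)^3|\phi|$ plus $|\nabla^2\eta||\phi|$ terms;
- $|\nabla\psi_\eta|\,|\nabla\phi|\,|(\nabla\psi_\eta)^{-1}|$.

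\textbf{Main obstacle.} The last product naively gives a square $(1+|\nabla\eta|)^2|\nabla\phi|$. The claimed bound has only a linear factor $(1+|\eta|+|\nabla\eta|)|\nabla\phi|$, and likewise only $|\nabla^2\eta||\phi|$ rather than $(1+|\nabla\eta|)|\nabla^2\eta||\phi|$. We must therefore exploit the structure.

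The product $\nabla\psi_\eta\,\nabla\phi\,(\nabla\psi_\eta)^{-1}$ is a conjugation of $\nabla\phi$. The unbounded entries sit only in the first row of $\nabla\psi_\eta$ and correspondingly only in the first row of the inverse (up to bounded factors). I would verify by explicit multiplication that the $\partial\eta$-terms from the two factors never multiply each other in the same entry.

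If this cancellation fails, I would state the bound with the quadratic factor instead. This still yields the stated continuity: $W^{1,p}\to W^{1,r}$ and $L^p\to L^r$ for every $r<p$, since $\eta\in H^2(\omega)$ gives $\nabla\eta\in L^q$ for all $q<\infty$ by the 2D Sobolev embedding, and $\nabla^2\eta\in L^2$.

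\textbf{Step 3: continuity.} Integrate the pointwise bounds over $\Omega^\eta$, changing variables with the bounded Jacobian. Then apply Hölder's inequality with $\frac1r=\frac1p+\frac1s$ for large $s$, using $\nabla\eta\in L^s(\omega)$. For the term $|\nabla^2\eta||\phi|$, use $\phi\in W^{1,p}\hookrightarrow L^{p^*}$, which compensates $\nabla^2\eta\in L^2$ via the trace or anisotropic integrability in $r$. The loss $r<p$ arises exactly from $\nabla\eta\notin L^\infty$.
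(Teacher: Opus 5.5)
Your Step 1 and the bounds on the individual factors in Step 2 are correct. The gap is in the ``main obstacle'': the cancellation you hope to verify does not exist. For the Eulerian gradient $\nabla\mathcal{J}_\eta\phi$, which is what $W^{1,r}(\Omega^\eta)$ requires, the linear factor in front of $|\nabla\phi|$ in the statement is false. The paper's proof, ``elementary computations and triangle's inequality'', does not address this. The stated form is correct for the reference gradient $\nabla_X\bigl(\frac{\nabla\psi_\eta}{\det\nabla\psi_\eta}\phi\bigr)$, i.e.\ before multiplying by $(\nabla\psi_\eta)^{-1}$.

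In $\frac{\nabla\psi_\eta}{\det\nabla\psi_\eta}\,\nabla\phi\,(\nabla\psi_\eta)^{-1}$, the unbounded entries of the two outer factors do meet. They sit at positions $(r,\theta),(r,z)$ of the first factor and in the $r$-row of the second, and they are linked through $\partial_r\phi_\theta$ and $\partial_r\phi_z$.

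Here is a concrete counterexample. Fix $X_0=(r_0,\theta_0,z_0)$ with $r_0\in(R-b,R)$, so $\rho\equiv1$ near $X_0$. Let $d=(r_0+\eta(\theta_0,z_0))/r_0$ and $\phi=\chi(r)(r-r_0)\mathbf{e}_z$, with a cutoff $\chi\equiv1$ near $r_0$. Then:
\begin{itemize}
\item $\phi(X_0)=0$ and $|\nabla\phi(X_0)|=1$;
\item the reference gradient of $\frac{\nabla\psi_\eta}{\det\nabla\psi_\eta}\phi$ at $X_0$ is $\bigl(\frac{\partial_z\eta}{d}\mathbf{e}_r+\frac{1}{d}\mathbf{e}_z\bigr)\otimes\mathbf{e}_r$;
\item the $r$-row of $(\nabla\psi_\eta)^{-1}$ is $\bigl(1,-\frac{\partial_\theta\eta}{r_0 d},-\partial_z\eta\bigr)$.
\end{itemize}
Hence the $(\mathbf{e}_r,\mathbf{e}_z)$-entry of $\nabla\mathcal{J}_\eta\phi$ at $\psi_\eta(X_0)$ is $-(\partial_z\eta)^2/d$, while the claimed right-hand side there equals $1+|\eta|+|\nabla\eta|$. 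Taking smooth $\eta$ with $\|\eta\|_{L^\infty}\le M$ and $|\partial_z\eta(\theta_0,z_0)|$ large rules out any constant depending only on $\Omega$ and $M$.

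So your fallback is not a fallback but the correct statement:
\[
|\nabla\mathcal{J}_\eta\phi|\lesssim\bigl((1+|\nabla\eta|)^2+|\nabla^2\eta|\bigr)|\phi|+(1+|\nabla\eta|)^2|\nabla\phi|.
\]
The bounded coefficient of $|\nabla^2\eta|$, on the other hand, is genuine, for a structural reason you did not supply. Since $\eta$ does not depend on $r$, $\nabla^2\eta$ enters only the $\partial_\theta$- and $\partial_z$-columns of the reference gradient. These columns are multiplied by the $\theta$- and $z$-rows of $(\nabla\psi_\eta)^{-1}$, namely $(0,1/d,0)$ and $(0,0,1)$ with $d=(r+\rho\eta)/r$, which are bounded. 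The unbounded $r$-row only meets $\partial_r$-derivatives, and those produce $\rho'\nabla\eta$ and $\rho''\eta$ but no second derivatives of $\eta$.

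Step 3 has a second, related problem. Once $r>2$, no integrability of $\phi$ can compensate the term $|\nabla^2\eta||\phi|$, because $\nabla^2\eta$ is constant in $r$ and only in $L^2(\omega)$. Take $\phi=\chi(r)\mathbf{e}_z$ with $\chi\equiv1$ near $R$. Near $\Gamma^\eta$, the $(\mathbf{e}_r,\mathbf{e}_z)$-entry of $\nabla\mathcal{J}_\eta\phi$ is $\partial_{zz}\eta/d$ plus terms lying in every $L^q$, $q<\infty$. So $\mathcal{J}_\eta\phi\in W^{1,r}(\Omega^\eta)$ forces $\partial_{zz}\eta\in L^r(\omega)$.

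Hence, with $\eta$ only in $H^2(\omega)$ as you assume, the claimed continuity $W^{1,p}(\Omega)\to W^{1,r}(\Omega^\eta)$ holds only for $r<p$ with $r\le 2$. In that range the right tool is a mixed-norm H\"older inequality on $\{r\ge a\}$:
\[
\|\nabla^2\eta\,\phi\|_{L^r}\le\|\nabla^2\eta\|_{L^2(\omega)}\,\|\phi\|_{L^r(a,R;L^s(\omega))},\qquad \tfrac1s=\tfrac1r-\tfrac12,
\]
combined with $\phi\in L^p(a,R;W^{1,p}(\omega))$ and the two-dimensional Sobolev embedding on the slices. Your H\"older argument with $\nabla\eta\in L^s(\omega)$ for all $s<\infty$ does handle correctly the $L^p\to L^r$ statement and all terms carrying powers of $1+|\nabla\eta|$.
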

\begin{proof}
    Elementary computations and triangle's inequality. 
\end{proof}

It is obvious that the Piola mapping preserves the zero trace of functions $\phi$ as above. Let us now prove that the Piola mapping preserves the zero boundary values in the normal direction only. More precisely, we have 
\begin{lemma}\label{lm:Piola}
For each 
$\phi\in C^{\infty}\left(\Omega;\mathbb{R}^{3}\right)$ with $\left.\phi\right|_{r=R}\cdot\mathbf{e}_{r}=0$ we have that $\left.\mathcal{J}_{\eta}\phi\right|_{r=R+\eta}\cdot\boldsymbol{\nu}^{\eta}=0$. 
\end{lemma}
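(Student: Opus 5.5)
The plan is to reduce the claim to the classical fact that the Piola transform carries normal vectors into normal vectors, i.e.\ that $(\nabla\psi_{\eta})^{-T}$ maps the normal of $\Gamma$ to a multiple of the normal of $\Gamma^{\eta}$. Fix a point $x=(\theta,z)\in\omega$. By \eqref{eqn:cond-rho} we have $\rho\equiv1$ near $r=R$, so $\psi_{\eta}(R,\theta,z)=\phi_{\eta}(\theta,z)$ and $\psi_{\eta}^{-1}(\phi_{\eta}(x))=\phi(x)$. Evaluating \eqref{eqn:Piola} at the boundary point therefore gives
\[
\mathcal{J}_{\eta}\phi\big(\phi_{\eta}(x)\big)=\frac{1}{\det\nabla\psi_{\eta}}\,\nabla\psi_{\eta}\big(\phi(x)\big)\,\phi\big(\phi(x)\big).
\]
It thus suffices to show $\nabla\psi_{\eta}\,\mathbf{v}\cdot\boldsymbol{\nu}^{\eta}=0$ whenever $\mathbf{v}\cdot\mathbf{e}_{r}=0$ at $r=R$. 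Here $\det\nabla\psi_{\eta}>0$ is finite, by \eqref{eqn:cond-eta-M} and $0\le\rho'<1/M$.

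The key observation is purely geometric. Near $r=R$ one has $\psi_{\eta}(R,\cdot,\cdot)=\phi_{\eta}$ as a map of $\omega$, so differentiating along the surface $\{r=R\}$ gives the following. In cylindrical coordinates, a vector $\mathbf{v}$ with $\mathbf{v}\cdot\mathbf{e}_{r}=0$ is a combination of $\mathbf{e}_{\theta}$ and $\mathbf{e}_{z}$. Both of these are tangent to $\Gamma=\phi(\omega)$, since $\partial_{\theta}\phi=R\,\mathbf{e}_{\theta}$ and $\partial_{z}\phi=\mathbf{e}_{z}$. By the chain rule,
\[
\nabla\psi_{\eta}\,\partial_{\theta}\phi=\partial_{\theta}\big(\psi_{\eta}\circ\phi\big)=\partial_{\theta}\phi_{\eta}=\boldsymbol{\tau}^{\eta}_{1},\qquad \nabla\psi_{\eta}\,\partial_{z}\phi=\boldsymbol{\tau}^{\eta}_{2}.
\]
Hence $\nabla\psi_{\eta}\mathbf{v}=\tfrac{v_{\theta}}{R}\boldsymbol{\tau}_{1}^{\eta}+v_{z}\boldsymbol{\tau}_{2}^{\eta}$. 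This vector is orthogonal to $\boldsymbol{\nu}^{\eta}$, because $\boldsymbol{\nu}^{\eta}$ is proportional to $\boldsymbol{\tau}_1^{\eta}\times\boldsymbol{\tau}_2^{\eta}$. This gives the claim.

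As a cross-check, I would verify this by direct computation. Write $\nabla\psi_{\eta}$ in the frame $(\mathbf{e}_r,\mathbf{e}_\theta,\mathbf{e}_z)$ at $r=R$:
\[
\nabla\psi_{\eta}\mathbf{e}_{r}=(1+\rho'\eta)\mathbf{e}_{r},\qquad \nabla\psi_{\eta}\mathbf{e}_{\theta}=\frac{\partial_{\theta}\eta}{R}\mathbf{e}_{r}+\frac{R+\eta}{R}\mathbf{e}_{\theta},\qquad \nabla\psi_{\eta}\mathbf{e}_{z}=\partial_{z}\eta\,\mathbf{e}_{r}+\mathbf{e}_{z}.
\]
Also $\boldsymbol{\nu}^{\eta}\propto(R+\eta)\mathbf{e}_{r}-\partial_{\theta}\eta\,\mathbf{e}_{\theta}-(R+\eta)\partial_{z}\eta\,\mathbf{e}_{z}$. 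The dot products then vanish explicitly.

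The only delicate point is regularity. The statement is for smooth $\phi$ and $\eta\in C^{2}$, so the traces are pointwise and no approximation is needed. I would remark that for $\eta\in H^{2}$ the identity holds a.e.\ on $\omega$ by density, using Lemma~\ref{lm:trace} and Lemma~\ref{lm:Piola-est}. The main obstacle is bookkeeping: checking that $\rho\equiv1$ near $r=R$, so that no $\rho'$ contributions appear in the tangential columns. This is guaranteed by \eqref{eqn:cond-rho}.
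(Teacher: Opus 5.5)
Your proof is correct, but its core argument differs from the paper's. The paper writes $\nabla\psi_{\eta}$ as an explicit matrix in cylindrical coordinates and computes at $r=R$ that $[\nabla\psi_{\eta}]\phi\cdot\mathbf{n}(\eta)=(R+\eta)\phi_{r}$, from which the claim follows. Your ``cross-check'' is essentially that computation. Yours uses the Jacobian in the orthonormal frame, with the factor $1/R$ in the $\mathbf{e}_{\theta}$-column, whereas the paper's matrix has the coordinate derivatives $\partial_{r}\psi_{\eta},\partial_{\theta}\psi_{\eta},\partial_{z}\psi_{\eta}$ as columns; this only rescales a tangential column and does not matter here.

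Your main argument is coordinate-free. Since $\psi_{\eta}\circ\phi=\phi_{\eta}$ on $\omega$, the chain rule shows that $\nabla\psi_{\eta}$ maps the tangent plane $\mathrm{span}\{\mathbf{e}_{\theta},\mathbf{e}_{z}\}$ of $\Gamma$ onto $\mathrm{span}\{\boldsymbol{\tau}_{1}^{\eta},\boldsymbol{\tau}_{2}^{\eta}\}=(\boldsymbol{\nu}^{\eta})^{\perp}$, and the scalar factor $1/\det\nabla\psi_{\eta}$ plays no role. This explains why the lemma holds and is robust. It uses nothing about the specific extension $\tilde{\eta}=\rho\eta$ or the cylinder beyond $\psi_{\eta}$ being $C^{1}$ up to $\Gamma$ and agreeing there with $\phi_{\eta}\circ\phi^{-1}$, so it would carry over to other extensions or non-radial displacements. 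Because $\nabla\psi_{\eta}$ is invertible, it also gives the converse for $\mathcal{J}_{\eta}^{-1}$, which the paper later uses in the compactness argument. The paper's computation is more direct for this particular geometry and is quantitative: it identifies the normal component exactly as a multiple of $\phi_{r}$, which also yields the converse since $R+\eta>0$.

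Two small remarks:
\begin{itemize}
\item You use $\phi$ both for the vector field and for the parametrization of $\Gamma$, as the paper does. This makes your first display hard to parse, so rename one of them.
\item $\rho'$ never enters the tangential columns $\partial_{\theta}\psi_{\eta},\partial_{z}\psi_{\eta}$. It only appears in $\nabla\psi_{\eta}\mathbf{e}_{r}$, which is multiplied by $\phi_{r}=0$. So your argument actually needs only $\rho(R)=1$, not $\rho\equiv1$ in a neighbourhood of $r=R$.
\end{itemize}
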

\begin{proof}
Let us recall the unit tangent vectors and unit outer normal at $\Gamma$ by 
\begin{equation}\label{eqn:vectors-tan-normal}
\mathbf{e}_{r}=\left(\cos\theta,\sin\theta,0\right),\ \mathbf{e}_{\theta}=\left(-\sin\theta,\cos\theta,0\right),\mathbf{e}_{z}=\left(0,0,1\right)
\end{equation}

We compute now 
\[
\nabla\psi_{\eta}=\left[\begin{array}{ccc}
1+\partial_{r}\tilde{\eta} & \partial_{\theta}\tilde{\eta} & \partial_{z}\tilde{\eta}\\
0 & r+\tilde{\eta} & 0\\
0 & 0 & 1
\end{array}\right],\quad\det\nabla\psi_{\eta}=\left(1+\partial_{r}\tilde{\eta}\right)\left(r+\tilde{\eta}\right)
\]
with $\det \nabla \psi_{\eta}>0$ due to conditions \eqref{eqn:cond-eta-M} and 
\eqref{eqn:cond-rho}.

Next, on $\Gamma^{\eta}$ that is at $r=R+\eta$ we have that the tangent vectors $\boldsymbol{\tau}^{\eta}_{1,2}$ and the unit outer normal vector  $\boldsymbol{\nu}^{\eta}$ are given by:
\[
\boldsymbol{\tau}_{1}^{\eta}:=\partial_{\theta}\eta\mathbf{e}_{r}+\left(R+\eta\right)\mathbf{e}_{\theta},\quad  \boldsymbol{\tau}_{2}^{\eta}:=\partial_{z}\eta\mathbf{e}_{r}+\mathbf{e}_{z}
\]
with the outer normal direction vector 
\[
\mathbf{n}\left(\eta\right):=\boldsymbol{\tau}_{1}^{\eta}\times\boldsymbol{\tau}_{2}^{\eta}=\left(R+\eta\right)\mathbf{e}_{r}-\partial_{\theta}\eta\mathbf{e}_{\theta}-\partial_{z}\eta\left(R+\eta\right)\mathbf{e}_{z}.
\]

We obtain, using the fact that when $r$ is close to $R$ we have $\tilde{\eta }=\eta$, that:
\begin{equation}\label{eqn:Piola-zero-normal}
\left[\nabla\psi_{\eta}\right]\phi\left(R\right)\cdot\mathbf{n}\left(\eta\right)=\left[\begin{array}{c}
\left(1+\partial_{r}\tilde{\eta}\right)\phi_{r}+\partial_{\theta}\tilde{\eta}\phi_{\theta}+\partial_{z}\tilde{\eta}\phi_{z}\\
\left(R+\tilde{\eta}\right)\phi_{\theta}\\
\phi_{z}
\end{array}\right]^{T}\cdot\left[\begin{array}{c}
R+\eta\\
-\partial_{\theta}\eta\\
-\partial_{z}\eta\left(R+\eta\right)
\end{array}\right]=(R+\eta)\phi_{r}
\end{equation}
and the conclusion follows. The Piola transform \emph{preserves  the zero boundary values in the normal direction}.
\end{proof}

In the rest of the section, we address the problem of extending test functions $\xi: I\times \omega \mapsto \mathbb{R}$ for the shell equation. More precisely, we would like to pair such a $\xi$ with a function defined on $\Omega ^{\eta}(t)$ with values in $\mathbb{R}^{3}$ which has  divergence zero and the correct boundary conditions. This task is not trivial, particularly if the extension operator needs to enjoy also suitable estimates. For this, we introduce the term \emph{extension operator}.
For example,  in \cite{mindrila-roy-multuilayered} 
we constructed such an extension operator $\mathcal{F}_{\eta}$,  adapted to the \emph{no-slip} case.
We will recall the construction below.
The pair $\left(\mathcal{F}_{\eta}\left(\xi\right),\xi\right)$ belongs to $\mathcal{T}^{\eta}$.
We refer to Subsection~\ref{ssec:extension-no-slip}. Note that the resulting test functions will make the slip term vanish.

To overcome this,  we construct a new extension operator $\mathcal{F}^{s}_{\eta}$, adapted to the  Navier slip. That is, in  needs to fulfill $\left.\mathcal{F}_{\eta}^{s}\left(\xi\right)\right|_{r=R+\eta}\cdot\boldsymbol{\nu}^{\eta}=\xi\mathbf{e}_{r}\cdot\boldsymbol{\nu}^{\eta}$ and $\left.\mathcal{F}_{\eta}^{s}\left(\xi\right)\right|_{r=R+\eta}\neq\xi\mathbf{e}_{r}$, to keep the slip term nonzero.
We present the details in Subsection~\ref{ssec:extension}.
\subsection{An extension operator adapted to the no-slip condition}\label{ssec:extension-no-slip}

We recall the extension operator adapted to the no-slip context from 
\cite[Proposition 3.1]{mindrila-roy-multuilayered}.
We can simplify the construction and re-define the operator $\mathcal{F}_{\delta}$, for a sufficiently smooth $\delta:I\times \omega \mapsto \mathbb{R}$ such that $\left\Vert \delta\right\Vert _{L_{t,x}^{\infty}}\le M<R$.
Here $\delta$ is a generic function prescribing some moving domain $\Omega^{\delta}$.

Consider first a smooth function
\begin{equation}\label{eqn:alpha-1}
    \alpha_{1}\in C^{\infty}\left(0,R+M\right),\ \alpha_{1}=\begin{cases}
1 & r\in\left[R-M,R+M\right]\\
0 & r\in\left[0,\frac{R-M}{2}\right]\\
C^{\infty} & r\in\left(\frac{R-M}{2},R-M\right)
\end{cases}
\end{equation}
and set, for any $\xi \in V_{s}$ (admissible test function for the shell equation) 
\begin{equation}\label{eqn:defn-ext-op-noslip}
\mathcal{F}_{\delta}\left(\xi\right):=\frac{R+\delta}{r}\alpha_{1}\left(r\right)\xi\mathbf{e}_{r}+\left(1-\alpha_{1}\left(r\right)\right)\beta\mathbf{e}_{\theta},\quad\beta\left(r,\theta,z\right):=-\frac{\alpha_{1}^{\prime}\left(r\right)}{1-\alpha_{1}\left(r\right)}\int_{0}^{\theta}\left(R+\delta\right)\xi d\tilde{\theta}
\end{equation}
We have 
\begin{proposition}\label{prop:extension-no-slip}
Let $\delta\in C^{\infty}\left(I;C^{\infty}\left(\omega\right)\right)$ be any prescribed motion of a moving domains $\Omega^{\delta}\left(t\right)$ with $t\in I$. The operator $\mathcal{F}_{\delta}$ from \eqref{eqn:defn-ext-op-noslip} maps $\mathcal{F}_{\delta}:C_{0}^{\infty}\left(\omega\right)\mapsto C_{\text{div}}^{\infty}\left(\Omega^{\delta}\right)$  and is a linear extension operator (of test functions defined on $\omega$ to $\Omega^{\delta}$)  such that $\text{tr}_{\Gamma^{\delta}}\mathcal{F}_{\delta}\left(\xi\right)=\xi\mathbf{e}_{r}$ and $\mathcal{F}_{\delta}\left(\xi\right) \times \mathbf{e}_{z}=\mathbf{0}$ on $\Gamma_{in/out}$.

Furthermore, for all $p,q\in\left[1,\infty\right]$ and $k\in\left[0,1\right]$ we have that 
\begin{equation}\label{eqn:cons-ext-1}
\left\Vert \mathcal{F}_{\delta}\left(\xi\right)\right\Vert _{L_{t}^{p}W_{x}^{k,q}}\lesssim\left\Vert \delta\xi\right\Vert _{L_{t}^{p}W_{x}^{k,q}}
\end{equation}
and  
\begin{equation}\label{eqn:cons-ext-2}
\left\Vert \partial_{t}\mathcal{F}_{\delta}\left(\xi\right)\right\Vert _{L_{t}^{p}L_{x}^{q}}\lesssim\left\Vert \partial_{t}\left(\delta\xi\right)\right\Vert _{L_{t}^{p}L_{x}^{q}}.
\end{equation}
The continuity constants depend only on $\Omega$.
\end{proposition}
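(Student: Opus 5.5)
The plan is a direct verification in cylindrical coordinates, using the explicit formula \eqref{eqn:defn-ext-op-noslip}. Write $\mathbf{F}:=\mathcal{F}_{\delta}(\xi)=F_r\mathbf{e}_r+F_\theta\mathbf{e}_\theta$ with
\[
F_r=\frac{R+\delta}{r}\alpha_1\xi,\qquad F_\theta=-\alpha_1^{\prime}(r)\int_0^\theta (R+\delta)\xi\, d\tilde\theta .
\]
Here the factor $(1-\alpha_1)$ cancels the denominator of $\beta$, so $F_\theta$ is smooth even where $\alpha_1=1$; there $\alpha_1'=0$ and $F_\theta\equiv 0$. Note that $\delta$ and $\xi$ depend only on $(t,\theta,z)$, not on $r$.

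\textbf{Divergence-free.} I will use the cylindrical formula
\[
\operatorname{div}\mathbf{F}=\frac1r\partial_r(rF_r)+\frac1r\partial_\theta F_\theta+\partial_z F_z,\qquad F_z=0 .
\]
Since $rF_r=(R+\delta)\alpha_1\xi$, we get $\frac1r\partial_r(rF_r)=\frac1r(R+\delta)\xi\,\alpha_1'$. On the other hand, $\frac1r\partial_\theta F_\theta=-\frac1r\alpha_1'(R+\delta)\xi$. The two terms cancel, so $\operatorname{div}\mathbf{F}=0$.

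\textbf{Smoothness and the axis.} Near the axis $r\le (R-M)/2$ we have $\alpha_1=0$ and $\alpha_1'=0$, so $\mathbf{F}\equiv 0$ there. The factor $1/r$ and the singular basis vectors $\mathbf{e}_r,\mathbf{e}_\theta$ therefore cause no problem.

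\textbf{Periodicity in $\theta$.} This is the one genuinely delicate point. For $\mathbf{F}$ to be single-valued in $\theta$ one needs $\int_0^{2\pi}(R+\delta)\xi\,d\theta=0$ for each $z$, at least on the support of $\alpha_1'$. I expect this to be the main obstacle. Presumably it is imposed on admissible test functions, or obtained by first subtracting a correction supported near the in/outflow region. I would handle it by restricting to test functions satisfying this flux condition. This is natural, since $\int_{\Gamma^\delta}\xi\,\mathbf{e}_r\cdot\boldsymbol{\nu}^\delta\,dA_\delta=\int_\omega(R+\delta)\xi\,dA$ is the flux through the shell. The $z$-dependence is handled pointwise in $z$, so the condition must hold for each fixed $z$.

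\textbf{Boundary conditions.} On $\Gamma^\delta$ we have $r=R+\delta\in[R-M,R+M]$, where $\alpha_1=1$ and $\alpha_1'=0$. Hence $\mathbf{F}\circ\phi_\delta=\frac{R+\delta}{R+\delta}\xi\mathbf{e}_r=\xi\mathbf{e}_r$. On $\Gamma_{in/out}$ ($z\in\{0,L\}$), $\xi\in C_0^\infty(\omega)$ vanishes near $z=0,L$, so $\mathbf{F}=0$ there and $\mathbf{F}\times\mathbf{e}_z=\mathbf{0}$ trivially. Linearity in $\xi$ is evident from the formula.

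\textbf{Estimates.} All coefficients $\alpha_1,\alpha_1'$, $1/r$ (on the support) and their $r$-derivatives are bounded by constants depending only on $R$ and $M$. Therefore
\[
|\mathbf{F}|\lesssim |(R+\delta)\xi|+\int_0^{2\pi}|(R+\delta)\xi|\,d\theta .
\]
Differentiating in $r,\theta,z$ gives the same structure applied to $\nabla_{\theta,z}((R+\delta)\xi)$. Integrating over $\Omega^\delta\subset B_{R+M}$, with Hölder/Jensen in $\theta$, yields \eqref{eqn:cons-ext-1} for $k=0,1$; the intermediate $k$ follows by interpolation. Here $\delta\xi$ is to be read as $(R+\delta)\xi$, which is the natural quantity. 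The time derivative falls only on $(R+\delta)\xi$, which gives \eqref{eqn:cons-ext-2}.
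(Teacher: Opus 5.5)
Your argument is the paper's argument. Its proof does the same three checks you do: $\mathcal{F}_\delta(\xi)|_{r=R+\delta}=\xi\mathbf{e}_r$ because $\alpha_1\equiv 1$ on $[R-M,R+M]$; vanishing at $z\in\{0,L\}$ from the support of $\xi$; and the cancellation in the cylindrical divergence. It then calls the estimates straightforward. Your bookkeeping for those estimates (coefficients bounded on $\{r\ge (R-M)/2\}$, Jensen in $\theta$, $\partial_t$ falling only on $(R+\delta)\xi$) is what that word hides, and it is correct.

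The two places where you depart from the statement are places the paper's proof passes over, and both departures are forced.

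\textbf{The slice condition is necessary.} The condition $\int_0^{2\pi}(R+\delta)\xi\,d\theta=0$ for every $z$ is not merely convenient. Any divergence-free field with $F_z\equiv 0$ and trace $\xi\mathbf{e}_r$ on $\Gamma^\delta$ is, in each slice $\{z=\mathrm{const}\}$, a planar solenoidal field on the cross-section. Its outward flux through the curve $r=R+\delta(\cdot,z)$ is exactly $\int_0^{2\pi}(R+\delta)\xi\,d\theta$, so this quantity must vanish. For the explicit formula this shows up as a jump of $F_\theta$ by $-\alpha_1'(r)\int_0^{2\pi}(R+\delta)\xi\,d\theta$ across the half-plane $\{\theta=0\}$. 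That jump puts a singular part into the distributional divergence. A concrete case: take $\delta=0$ and $\xi=\psi(\theta)\chi(z)$ with $0\le\psi\in C_0^\infty(0,2\pi)$ and $0\le\chi\in C_0^\infty(0,L)$, both nonzero.

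Hence $\mathcal{F}_\delta$ does not map all of $C_0^\infty(\omega)$ into $C^\infty_{\mathrm{div}}(\Omega^\delta)$, and your restricted version is the correct form of the proposition.

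\textbf{The right-hand side must be $(R+\delta)\xi$.} Reading $\delta\xi$ as $(R+\delta)\xi$ is also forced. For $\delta=0$ the stated bounds would give $\mathcal{F}_0(\xi)=0$, which is false.

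\textbf{Two cautions about your side remarks.}
\begin{enumerate}
\item The paper imposes no such condition on admissible test functions. The functions it later feeds into this operator violate it in general: $\xi=\partial_t\eta_n$ with $\delta=\eta_n$ in the compactness argument, and $\xi=\eta$ with $\delta=\eta$ in the second-gradient argument. For instance, $\int_0^{2\pi}(R+\eta)\partial_t\eta\,d\theta$ is the rate of change of the cross-sectional area at height $z$. So the restriction is not harmless downstream.
\item A correction cannot be supported near $\Gamma_{in/out}$. It must itself carry the slice fluxes $\int_0^{2\pi}(R+\delta)\xi\,d\theta$ at every height $z$ where they are nonzero. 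By the divergence theorem, it therefore transports axial flux along the whole cylinder.
\end{enumerate}
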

\begin{proof}
    The estimates can be checked straightforwardly. Regarding the boundary values, since \[\mathcal{F}_{\delta}\left(\xi\right):=\frac{R+\delta}{r}\alpha_{1}\left(r\right)\xi\mathbf{e}_{r}+\left(-\alpha_{1}^{\prime}\left(r\right)\right)\int_{0}^{\theta}\left(R+\delta\right)\xi d\tilde{\theta}\mathbf{e}_{\theta}\]
we see that due to \eqref{eqn:alpha-1} and $\xi\left(\theta, z \right)=0$ for $z \in \left \{0, L\right\}$ we have
\[\left.\mathcal{F}_{\delta}\left(\xi\right)\right|_{r=R+\delta}=\xi\mathbf{e}_{r},\quad\left.\mathcal{F}_{\delta}\left(\xi\right)\right|_{z\in\left\{ 0,L\right\} }=\mathbf{0}.
\]
Using the formula for divergence in cylindrical coordinates 
\[
\text{div}\ \mathcal{F}_{\delta}\left(\xi\right)=\frac{1}{r}\partial_{r}\left(r\cdot\frac{R+\delta}{r}\alpha_{1}\left(r\right)\xi\right)+\frac{1}{r}\partial_{\theta}\left(\left(-\alpha_{1}^{\prime}\left(r\right)\right)\int_{0}^{\theta}\left(R+\delta\right)\xi d\tilde{\theta}\right)=0.
\]
\end{proof}
We will use this operator in Subsection~\ref{ssec:compactness}.

\subsection{An extension operator adapted to the Navier slip condition}\label{ssec:extension}
For the case of \emph{Navier-slip} the operator considered in Proposition~\ref{prop:extension-no-slip} contains in fact too much information. In the sense that the slip term in \eqref{eqn:weak-formulation} vanishes, and this  makes it not suitable for the construction of a solution, for example as in Subsection~\ref{ssec:construction-weak}. Therefore, we need to construct a new extension operator $\mathcal{F}_{\eta}^{s}\left(\xi\right)$ that extends an arbitrary function $\xi\in H^{2}\left(\omega\right)$ from $\omega$ to $\Omega^{\eta}$ with the property that $\left(\mathcal{F}_{\eta}^{s}\left(\xi\right),\xi\right)\in\mathcal{T}^{\eta}$ for any $\xi$ such that $(\mathbf{q},\xi)\in\mathcal{T}^{\eta}$ for some suitable $\mathbf{q}$.

We emphasize that the $s$ in the definition of $\mathcal{F}_{\eta}^{s}$ is by no means related to raising $\mathcal{F}_{\eta}$ to the power $s$; the $s$ comes from "slip".

First, let introduce the \emph{inverse Piola transform} of a function $\varphi:\Omega^{\eta}\mapsto\mathbb{R}^{3}$ by the formula
\begin{equation}\label{eqn:piola-inverse}
\mathcal{J}_{\eta}^{-1}\varphi:=\left(\det\nabla\psi_{\eta}\left(\nabla\psi_{\eta}\right)^{-1}\varphi\right)\circ\psi_{\eta}:\Omega\mapsto\mathbb{R}^{3}
\end{equation}
which enjoys the same (adapted) properties of $\mathcal{J}_{\eta}$ from Lemma~\ref{lm:Piola-est}.

We  assume that
$\left\Vert \eta\right\Vert _{L_{x}^{\infty}}\le M<R$.

Observing now that the Piola transform preserves the zero boundary values in normal direction, we would like to find first a function $\mathbf{q}:\Omega\mapsto \mathbb{R}^{3}$ such that
$\left.\left(\mathbf{q}-\mathcal{J}_{\eta}^{-1}\left(\xi\mathbf{e}_{r}\right)\right)\right|_{r=R}\cdot\mathbf{e}_{r}=0$, using \eqref{eqn:piola-inverse}.
The function $\xi\mathbf{e}_{r}$ is defined for all $r \in (0, R+\eta)$ and constant in the $r$ variable. Now, we want to extend $\mathbf{q}$ from $r=R$ to all $r\in (0,R)$ in a divergence free way. For this, since
$\mathcal{J}_{\eta}^{-1}\left(\xi\mathbf{e}_{r}\right)\cdot\mathbf{e}_{r}=\left(r+\tilde{\eta}\right)\xi$, 
 we may define
\[
\mathbf{q}=\mathbf{q}\left(\xi\right):=\left(r+\tilde{\eta}\right)\xi\mathbf{e}_{r}+\beta\mathbf{e}_{\theta}\]
with $\beta$ chosen such that
\[\beta=\beta_{\eta}\left(r,\theta,z\right)=-\partial_{r}\int_{0}^{\theta}r\left(r+\tilde{\eta}\left(r,s,z\right)\right)\xi\left(s,z\right)ds.\]
We compute
 \[
\text{div}\ \mathbf{q}=\frac{1}{r}\partial_{r}\left(r\left(r+\tilde{\eta}\right)\xi\right)+\frac{1}{r}\partial_{\theta}\beta=0\] due to the particular choice of $\beta$.

Now, since $\mathbf{q}:\Omega\mapsto\mathbb{R}$ we need to define it on $\Omega^{\eta}$ preserving its essential properties. Consequently we use the Piola transform $\mathcal{J}_{\eta}$ (from Lemma~\ref{lm:Piola}) and
we define 
\begin{equation}
\begin{aligned}\mathcal{F}_{\eta}^{s}\left(\xi\right): & =\mathcal{J}_{\eta}\left(\mathbf{q}\left(\xi\right)\right)=\frac{1}{\left(1+\partial_{r}\tilde{\eta}\right)\left(r+\tilde{\eta}\right)}\left[\begin{array}{c}
\left(1+\partial_{r}\tilde{\eta}\right)\left(r+\tilde{\eta}\right)\xi+\left(\partial_{\theta}\tilde{\eta}\right)\beta\\
\left(r+\tilde{\eta}\right)\beta\\
0
\end{array}\right]^{T}.
\end{aligned}
\end{equation}
Using Lemma~\ref{lm:Piola} we have that 
\begin{equation}
\text{div}\mathcal{F}^{s}_{\eta}\left(\xi\right)=0,\quad\text{tr}_{\Gamma^{\eta}}\mathcal{F}_{\eta}\left(\xi\right)\cdot\boldsymbol{\nu}^{\eta}=\xi\mathbf{e}_{r}\cdot\boldsymbol{\nu}^{\eta}.
\end{equation}
Regarding the boundary condition at $z\in\left\{ 0,L\right\}$, since  $\xi(z)=0$ we get $\beta=0$ and thus $\mathcal{F}^{s}_{\eta}\left(\xi\right)=0$.

\paragraph{Estimates}
Let $\eta\in L^{\infty}_{t}H^{2}_{x}$. 
First, by straight-forward computations we get that
\begin{lemma}\label{lm:q-ests} For all $p\in[1,\infty]$ and $q\in[1,\infty)$ we have that
\begin{equation}
\begin{aligned}\left\Vert \beta\right\Vert _{L_{x}^{p}}\lesssim & \left\Vert \eta\xi\right\Vert _{L_{x}^{p}}\\
\left\Vert \nabla\beta\right\Vert _{L_{x}^{q}}\lesssim & \left\Vert \eta\xi\right\Vert _{L_{x}^{q}}+\left\Vert \nabla\xi\right\Vert _{L_{x}^{q}}+\left\Vert \left|\nabla\eta\right|\xi\right\Vert _{L_{x}^{q}}+\left\Vert \eta\left|\nabla\xi\right|\right\Vert _{L_{x}^{q}}\\
\left\Vert \mathbf{q}\right\Vert _{L_{x}^{p}}\lesssim & \left\Vert \xi\right\Vert _{L_{x}^{p}}+\left\Vert \eta\xi\right\Vert _{L_{x}^{p}}\\
\left\Vert \nabla\mathbf{q}\right\Vert _{L_{x}^{q}}\lesssim & \left\Vert \eta\xi\right\Vert _{L_{x}^{q}}+\left\Vert \nabla\xi\right\Vert _{L_{x}^{q}}+\left\Vert \left|\nabla\eta\right|\xi\right\Vert _{L_{x}^{q}}+\left\Vert \eta\left|\nabla\xi\right|\right\Vert _{L_{x}^{q}}
\end{aligned}
\end{equation}

All continuity constants depend only on $\Omega$ and $M$.
\end{lemma}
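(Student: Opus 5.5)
The estimates follow by differentiating the explicit formulas for $\beta$ and $\mathbf{q}$ and using that $\rho$ is a fixed smooth cutoff. I would first expand $\beta$ with the product rule. Since $\tilde{\eta}(r,s,z)=\rho(r)\eta(s,z)$,
\[
\beta=-\int_{0}^{\theta}\Big(\big(2r+\tilde{\eta}\big)\xi+r\rho^{\prime}(r)\eta\,\xi\Big)ds
=-\int_{0}^{\theta}\Big(2r\xi+\big(\rho+r\rho^{\prime}\big)\eta\,\xi\Big)ds .
\]
So $\beta$ is a $\theta$-primitive of $\xi$ and $\eta\xi$, with coefficients that are smooth, bounded functions of $r$ alone, controlled by $R$ and the fixed $\rho$. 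Similarly,
\[
\mathbf{q}=(r+\rho\eta)\xi\mathbf{e}_{r}+\beta\mathbf{e}_{\theta},
\]
so $|\mathbf{q}|\lesssim|\xi|+|\eta\xi|+|\beta|$. The $L^{p}$ bounds then come from the one-dimensional Hölder/Minkowski inequality: the primitive is taken over the bounded interval $(0,2\pi)$, hence
\[
\Big\|\int_{0}^{\theta}g\,ds\Big\|_{L^{p}}\lesssim\|g\|_{L^{p}}
\]
uniformly in $r$, and integrating over $r\in(0,R)$ only adds a constant.

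I note that the first line as written, $\|\beta\|\lesssim\|\eta\xi\|$, misses the $2r\xi$ contribution. I would either read it as $\|\xi\|+\|\eta\xi\|$, which is consistent with the bound for $\mathbf{q}$, or absorb that term by recording it as part of the $\xi$-terms.

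Next, for the gradients I would use cylindrical coordinates, $\nabla=\mathbf{e}_{r}\partial_{r}+\mathbf{e}_{\theta}r^{-1}\partial_{\theta}+\mathbf{e}_{z}\partial_{z}$, together with the derivatives of the basis vectors. The pieces are as follows.
\begin{itemize}
\item $\partial_{r}\beta$ produces $\rho^{\prime},\rho^{\prime\prime}$ coefficients times primitives of $\xi$ and $\eta\xi$.
\item $\partial_{\theta}\beta$ gives back the integrand pointwise, which is of order $|\xi|+|\eta\xi|$.
\item $\partial_{z}\beta=-\int_{0}^{\theta}\big(2r\partial_{z}\xi+c(r)(\partial_{z}\eta\,\xi+\eta\,\partial_{z}\xi)\big)ds$, which yields exactly the terms $|\nabla\xi|$, $|\nabla\eta|\xi$ and $\eta|\nabla\xi|$.
\end{itemize}
Applying the same primitive bound gives the stated $L^{q}$ estimate for $\nabla\beta$, and the terms in $\nabla\mathbf{q}$ are of the same type.

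The main obstacle is not any single computation. It is the singular factor $1/r$ in the angular derivative near the axis, and the single-valuedness and periodicity of $\beta$ in $\theta$. I would handle the axis by checking where the support of $\rho$ sits. Since $\rho$ vanishes on $(0,a)$, near the axis $\beta=-r^{2}\partial_{r}(\ldots)$-type terms carry an explicit factor $r$, which cancels $1/r$. If periodicity fails because $\int_{0}^{2\pi}\xi\neq0$, I would note that the estimates themselves are local and unaffected, and defer that issue to the construction. This is also why $q<\infty$ is imposed for the gradients: $\nabla\eta\in H^{1}$ is not bounded, so products like $|\nabla\eta|\xi$ are kept as norms rather than pulled out in $L^{\infty}$.
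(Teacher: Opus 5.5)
Your approach is the paper's. The paper's proof only asserts that the bounds follow from the explicit formulas by the triangle inequality, and you actually carry out that computation; you also rightly need no Piola estimate for $\mathbf{q}$ itself. Your objection to the first line is correct, and only your first reading of it works. For $\eta\equiv0$ one has $\beta=-2r\int_0^\theta\xi\,ds\neq0$ while $\Vert\eta\xi\Vert_{L^p}=0$, and nothing on that right-hand side can absorb the $2r\xi$ contribution. The line must therefore read $\Vert\beta\Vert_{L^p}\lesssim\Vert\xi\Vert_{L^p}+\Vert\eta\xi\Vert_{L^p}$.

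The gap is that the same undifferentiated $\xi$ survives in the gradients. Your claim that the primitive bound ``gives the stated estimate'' for $\nabla\beta$ and $\nabla\mathbf{q}$ overlooks it. Indeed
\[
\frac{1}{r}\partial_\theta\beta=-2\xi-\frac{\rho+r\rho^{\prime}}{r}\eta\xi,\qquad \partial_r\beta=-\int_0^\theta\big(2\xi+(2\rho^{\prime}+r\rho^{\prime\prime})\eta\xi\big)\,ds .
\]
With $q_r=(r+\rho\eta)\xi$ and $q_\theta=\beta$, the diagonal entries of $\nabla\mathbf{q}$ are $\partial_r q_r=(1+\rho^{\prime}\eta)\xi$ and $\frac1r\partial_\theta q_\theta+\frac{q_r}{r}=-(1+\rho^{\prime}\eta)\xi$. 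So your computation produces $\Vert\xi\Vert_{L^q}$, which is not on the stated right-hand sides. Without further input the second and fourth lines are false: $\eta\equiv0$, $\xi\equiv1$ gives $\beta=-2r\theta$ and a vanishing right-hand side.

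The missing ingredient is the clamping. For $\xi\in H_0^2(\omega)$, as in Proposition~\ref{prop:extension-slip}, $\xi$ vanishes at $z\in\{0,L\}$. The one-dimensional Poincar\'e inequality in $z$ then gives $\Vert\xi\Vert_{L^q(\omega)}\lesssim\Vert\partial_z\xi\Vert_{L^q(\omega)}$, which absorbs these terms into $\Vert\nabla\xi\Vert_{L^q}$. This cannot rescue the first line, whose right-hand side contains no derivative of $\xi$.

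Finally, deferring the periodicity issue is consistent with the paper, which ignores it. But then all gradient bounds concern the a.e.\ gradient on the slit cylinder $0<\theta<2\pi$. Unless $\int_0^{2\pi}\xi\,ds=\int_0^{2\pi}\eta\xi\,ds=0$ for a.e.\ $z$, $\beta$ jumps across $\{\theta=0\}$, and the distributional gradient of $\mathbf{q}$ on $\Omega$ has a singular part.
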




\begin{proof}
The estimates for $\beta$ are straight-forward and use only the triangle inequality.  For the estimates of $\mathbf{q}$,
we combine Lemma~\ref{lm:Piola-est} and Lemma~\ref{lm:q-ests}..
    We also need to make use of the Sobolev embeddings $H^{2}\left(\omega\right)\hookrightarrow W^{1,q}\left(\omega\right)$ for any $q\in [1,\infty)$ and $H^{2}\left(\omega\right)\hookrightarrow L^{\infty}\left(\omega\right)$.

\end{proof}

Next, let us present some pointwise estimates for $\mathcal{F}^{s}_{\eta}$. We have
\begin{lemma}\label{lm:pointwise-F-s-eta} It holds, up to a constant depending on $\Omega$ and $M$, that
\begin{equation}
\begin{aligned}\left|\mathcal{F}_{\eta}^{s}\left(\xi\right)\right|\lesssim & \left(1+\left|\eta\right|+\left|\nabla\eta\right|\right)\left(\left|\xi\right|+\left|\eta\xi\right|\right)\\
\left|\nabla\mathcal{F}_{\eta}^{s}\left(\xi\right)\right|\lesssim & \left(1+\left|\eta\right|^{3}+\left|\nabla\eta\right|^{3}+\left|\nabla^{2}\eta\right|\right)\left(\left|\xi\right|+\left|\eta\xi\right|\right)+\\
 & \left(1+\left|\eta\right|+\left|\nabla\eta\right|\right)\left(\left|\eta\xi\right|+\left|\left|\nabla\eta\right|\xi\right|+\left|\eta\right|\left|\nabla\xi\right|\right)
\end{aligned}
\end{equation}
\end{lemma}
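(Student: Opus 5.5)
The plan is to obtain both bounds by composing two pointwise estimates already available: the pointwise Piola estimates of Lemma~\ref{lm:Piola-est}, applied with $\phi=\mathbf{q}(\xi)$, and pointwise versions of the bounds on $\beta$ and $\mathbf{q}$ behind Lemma~\ref{lm:q-ests}. Since $\mathcal{F}_{\eta}^{s}(\xi)=\mathcal{J}_{\eta}(\mathbf{q}(\xi))$, Lemma~\ref{lm:Piola-est} gives at every point of $\Omega^{\eta}$, with $\mathbf{q}$ and $\nabla\mathbf{q}$ evaluated at $\psi_{\eta}^{-1}(x)$,
\[
\left|\mathcal{F}_{\eta}^{s}(\xi)\right|\lesssim\left(1+|\eta|+|\nabla\eta|\right)|\mathbf{q}|,
\]
\[
\left|\nabla\mathcal{F}_{\eta}^{s}(\xi)\right|\lesssim\left(1+|\eta|+|\nabla\eta|\right)^{3}|\mathbf{q}|+|\nabla^{2}\eta||\mathbf{q}|+\left(1+|\eta|+|\nabla\eta|\right)|\nabla\mathbf{q}|.
\]
I will then use the elementary inequality $(1+a+b)^{3}\lesssim 1+a^{3}+b^{3}$, which merges the first two terms into the coefficient $\left(1+|\eta|^{3}+|\nabla\eta|^{3}+|\nabla^{2}\eta|\right)$ appearing in the statement.

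The second step is to bound $\mathbf{q}=(r+\tilde\eta)\xi\mathbf{e}_{r}+\beta\mathbf{e}_{\theta}$ and $\nabla\mathbf{q}$ pointwise. The radial component is handled directly: $r<R$ and $|\tilde\eta|\le|\eta|$, so $|(r+\tilde\eta)\xi|\lesssim|\xi|+|\eta\xi|$. For the angular component I will expand the $r$-derivative inside the integral,
\[
\beta=-\int_{0}^{\theta}\left(2r+\tilde\eta+r\rho'\eta\right)\xi\,ds .
\]
Using $0\le\rho'<1/M$ and $|\tilde\eta|\le|\eta|$, the integrand is controlled by $|\xi|+|\eta\xi|$. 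The derivatives of $\mathbf{q}$ are computed term by term. Differentiating $(r+\tilde\eta)\xi$ produces $\xi$, $\rho'\eta\xi$, $|\nabla\eta|\xi$ and $(r+\tilde\eta)\nabla\xi$. Differentiating $\beta$ in $\theta$ returns the integrand itself. Differentiating $\beta$ in $r$ or $z$ moves the derivative inside the integral and generates terms of the same type as before, namely $\eta\xi$, $|\nabla\eta|\xi$ and $\eta|\nabla\xi|$. All constants depend only on $R$, $M$ and the fixed cut-off $\rho$.

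The third step is to transfer these bounds from $\psi_{\eta}^{-1}(x)$ to $x$. The map $\psi_{\eta}$ preserves $(\theta,z)$, and $\eta$, $\xi$ depend only on $(\theta,z)$, so their values are unchanged under composition. The factor $\det\nabla\psi_{\eta}=(1+\rho'\eta)(r+\tilde\eta)$ is bounded above and below by \eqref{eqn:cond-eta-M} and \eqref{eqn:cond-rho}, so no additional factors appear. Substituting the bounds of the second step into the display of the first step and collecting terms should give the two stated inequalities.

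The main obstacle is the nonlocality of $\beta$: it is an integral in $\theta$, so a genuinely pointwise bound by $|\xi|+|\eta\xi|$ at the same point does not follow from the triangle inequality alone. My plan is to control $|\beta|$ and $|\partial_{r,z}\beta|$ by the corresponding integrands, integrated over the bounded range $\theta\in(0,2\pi)$, and to read the right-hand sides of the lemma in that sense. In the $L^{p}$ applications of Lemma~\ref{lm:q-ests} this is harmless: Hölder's inequality in $\theta$ on the bounded interval converts the angular integral back into the stated norms. A second delicate point is the lone $|\nabla\xi|$ term coming from $r\,\nabla\xi$ in $\nabla\mathbf{q}$, since the statement only displays $|\eta||\nabla\xi|$. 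I will track this term explicitly and absorb it using the fact that $R+\tilde\eta$ is bounded above and below, i.e.\ by writing $r+\tilde\eta$ as a single bounded factor multiplying $\nabla\xi$.
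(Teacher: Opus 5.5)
Your strategy is the paper's own. Its proof is the single sentence combining Lemma~\ref{lm:q-ests} (an $L^p$ statement which, note, does contain $\|\nabla\xi\|_{L^q}$) with Lemma~\ref{lm:Piola-est}. You go further than the paper by spotting the two places where this composition does not give the displayed pointwise bounds, but your repair of the lone $|\nabla\xi|$ term fails, and in fact the lemma is false as written.

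On the repair: bounding $(r+\tilde\eta)\nabla\xi$ by a constant times $|\nabla\xi|$ is correct. But $|\nabla\xi|$ on its own does not appear on the right-hand side, and no upper or lower bound on $r+\tilde\eta$ turns it into $|\eta|\,|\nabla\xi|$. The same lone gradient also enters through $\partial_z\beta$, which contains $-\int_0^\theta 2r\,\partial_z\xi(s,z)\,ds$. This contradicts your claim that the $r$- and $z$-derivatives of $\beta$ only produce terms of type $\eta\xi$, $|\nabla\eta|\xi$, $\eta|\nabla\xi|$.

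Neither defect can be repaired. Take $\eta\equiv 0$. Then $\tilde\eta=0$, $\psi_0=\mathrm{id}$, and the explicit formula gives $\mathcal{F}^{s}_{0}(\xi)=\xi\mathbf{e}_r+\beta\mathbf{e}_\theta$ with $\beta=-2r\int_0^\theta\xi(s,z)\,ds$. Both right-hand sides reduce to a constant times $|\xi(\theta,z)|$.
\begin{itemize}
\item The first inequality fails at any point with $r>0$ and $\xi(\theta,z)=0\neq\int_0^\theta\xi(s,z)\,ds$. This is your nonlocality objection made concrete.
\item The second inequality fails wherever $\xi(\theta,z)=0\neq\partial_z\xi(\theta,z)$, since $\partial_z\mathcal{F}^{s}_{0}(\xi)\cdot\mathbf{e}_r=\partial_z\xi$.
\end{itemize}

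So \emph{reading the right-hand sides in that sense} is not a proof step; it is a correction of the statement and must be made explicitly. What your computation actually proves (Lemma~\ref{lm:Piola-est} applied to your formulas for $\mathbf{q}$ and $\nabla\mathbf{q}$) is the displayed estimate with two changes:
\begin{itemize}
\item an additional term $(1+|\eta|+|\nabla\eta|)|\nabla\xi|$ in the gradient bound;
\item each of $|\xi|$, $|\eta\xi|$, $|\nabla\eta|\,|\xi|$, $|\nabla\xi|$ supplemented by its angular integral $\int_0^{2\pi}(\cdot)(s,z)\,ds$, these coming from $\beta$, $\partial_r\beta$ and $\partial_z\beta$.
\end{itemize}
That corrected version is all that is used afterwards. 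After integrating, H\"{o}lder's inequality in $\theta$ removes the angular integrals as you say, and the extra gradient term matches the $\|\nabla\xi\|_{L^{q_1}}$ in Proposition~\ref{prop:extension-slip}. You should state and prove that inequality instead of the original one.

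A minor point: with the paper's matrix, $\det\nabla\psi_\eta=(1+\rho'\eta)(r+\tilde\eta)$ equals $r$ for $r<a$, so it is not bounded below. A pointwise argument does not need that bound anyway.
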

\begin{proof}
We use  Lemma~\ref{lm:q-ests} and we combine it with the Piola estimates from Lemma~\ref{lm:Piola-est} 
\end{proof}
\begin{proposition}\label{prop:extension-slip}
Let $\eta \in L^{\infty}\left(I;H_{0}^{2}\left(\omega\right)\right)$ and assume without loss of generality that $\left\Vert \eta\right\Vert _{L_{t}^{\infty}H_{x}^{2}}\le1$. Then,
for any $\xi \in H_{0}^{2}(\omega)$ such that there exists $\mathbf{q}$ for which $\left(\mathbf{q},\xi\right)\in\mathcal{T}^{\eta}$, the extension operator $\mathcal{F}_{\eta}^{s}\left(\xi\right)$ constructed above has the property $\left(\mathcal{F}_{\eta}^{s}\left(\xi\right),\xi\right)\in\mathcal{T}^{\eta}$.
Furthermore, for any $1\le p < p_1\le \infty$ and $1\le q\le 2$ and any $q_1>q$ 
it enjoys the estimates 
\begin{equation}\label{eqn:extension-estimates}
\begin{aligned}\left\Vert \mathcal{F}_{\eta}^{s}\left(\xi\right)\right\Vert _{L_{x}^{p}}\lesssim & \left\Vert \xi\right\Vert _{L_{x}^{p_{1}}}\\
\left\Vert \nabla\mathcal{F}_{\eta}^{s}\left(\xi\right)\right\Vert _{L_{x}^{q}}\lesssim & \left\Vert \xi\right\Vert _{L_{x}^{q_{1}}}+\left\Vert \nabla\xi\right\Vert _{L_{x}^{q_{1}}}+\left\Vert \xi\right\Vert _{L_{x}^{2}}
\end{aligned}
\end{equation}
    
\end{proposition}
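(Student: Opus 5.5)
The proof has two parts: first show that $\left(\mathcal{F}_{\eta}^{s}(\xi),\xi\right)\in\mathcal{T}^{\eta}$, then derive \eqref{eqn:extension-estimates} from the pointwise bounds of Lemma~\ref{lm:pointwise-F-s-eta} by H\"older's inequality and Sobolev embeddings for $\eta\in H^{2}_{0}(\omega)$.

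\textbf{Membership in $\mathcal{T}^{\eta}$.} The structural properties have essentially been checked in the construction.
\begin{enumerate}
\item By the cylindrical divergence formula and the choice of $\beta$, we have $\operatorname{div}\mathbf{q}(\xi)=0$. The Piola transform preserves solenoidality, so $\operatorname{div}\mathcal{F}^{s}_{\eta}(\xi)=0$.
\item Near $r=R$ we have $\tilde\eta=\eta$ and $\partial_r\tilde\eta=0$. A computation analogous to \eqref{eqn:Piola-zero-normal} then gives $\operatorname{tr}_{\eta}\mathcal{F}^{s}_{\eta}(\xi)\cdot\boldsymbol{\nu}^{\eta}=\xi\mathbf{e}_{r}\cdot\boldsymbol{\nu}^{\eta}$.
\item Since $\xi=0$ at $z\in\{0,L\}$, the extension vanishes there, so the tangential condition on $\Gamma_{in/out}$ holds.
\end{enumerate}
The remaining requirements are regularity conditions.
\begin{itemize}
\item $\mathcal{F}^{s}_{\eta}(\xi)\in V^{\eta}_{fl}$: this follows from the estimates proved below, using that $\xi\in V_{s}$ and $\eta\in L^\infty_tH^2_x$.
\item $\partial_t\mathcal{F}^{s}_{\eta}(\xi)\in L^2_tL^2_x$: differentiating the explicit formula in time produces terms $\partial_t\xi$, $\partial_t\eta\,\xi$ and $\partial_t\nabla\eta\,\xi$. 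I would control these using the regularity inherited from the hypothesis that some $\mathbf{q}$ with $(\mathbf{q},\xi)\in\mathcal{T}^\eta$ exists. In practice I would assume $\partial_t\eta\in L^\infty_tL^2$ together with the regularity of $\xi$, and I would note that the $\partial_t\nabla\eta$ term is the delicate one.
\end{itemize}

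\textbf{$L^p$ estimate.} Start from the first pointwise bound of Lemma~\ref{lm:pointwise-F-s-eta}. Since $\|\eta\|_{H^2}\le1$, the embedding gives $\|\eta\|_{L^\infty}\lesssim1$, so $|\mathcal{F}^{s}_{\eta}(\xi)|\lesssim(1+|\nabla\eta|)|\xi|$. Here $\xi$ and $\eta$ are functions of $(\theta,z)$, extended constantly in $r$, so the $L^p(\Omega^\eta)$ norms reduce to $L^p(\omega)$ norms up to constants depending on $R,M$. Apply H\"older's inequality with $\frac1p=\frac1{p_1}+\frac1s$. Because $H^2(\omega)\hookrightarrow W^{1,s}(\omega)$ for every $s<\infty$ in two dimensions, $\|\nabla\eta\|_{L^s}\lesssim1$. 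This yields $\|\mathcal{F}^s_\eta(\xi)\|_{L^p}\lesssim\|\xi\|_{L^{p_1}}$. The strict inequality $p<p_1$ is needed exactly because $\nabla\eta\notin L^\infty$.

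\textbf{Gradient estimate.} Use the second pointwise bound. The terms fall into three groups.
\begin{itemize}
\item Terms $(1+|\nabla\eta|^3)|\xi|$ and $|\nabla\eta|^2|\xi|$: by H\"older with $\frac1q=\frac1{q_1}+\frac1s$ and $\|\nabla\eta\|_{L^{3s}}\lesssim1$, they are bounded by $\|\xi\|_{L^{q_1}}$.
\item Terms $(1+|\nabla\eta|)|\nabla\xi|$: likewise bounded by $\|\nabla\xi\|_{L^{q_1}}$.
\item The critical term $|\nabla^2\eta||\xi|$: here $\nabla^2\eta$ is only in $L^2$. Since $q\le2$, I would estimate $\|\nabla^2\eta\,\xi\|_{L^q}\le\|\nabla^2\eta\|_{L^2}\|\xi\|_{L^{m}}$ with $\frac1q=\frac12+\frac1m$. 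When $q<2$, I would bound $\|\xi\|_{L^m}$ by interpolation and the two-dimensional Sobolev embedding in terms of $\|\xi\|_{L^2}+\|\nabla\xi\|_{L^{q_1}}$. When $q=2$, one needs $\xi\in L^\infty$, and $W^{1,q_1}\hookrightarrow L^\infty$ holds since $q_1>2$.
\end{itemize}
The $\|\xi\|_{L^2}$ on the right-hand side of \eqref{eqn:extension-estimates} comes from this lower-order part of the embedding.

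\textbf{Main obstacle.} I expect the hardest step to be this $|\nabla^2\eta||\xi|$ term, together with the time-derivative term $\partial_t\nabla\eta\,\xi$ needed for membership in $\mathcal{T}^\eta$. Both involve derivatives of $\eta$ that the energy does not bound pointwise. They are handled only through the exponent gap $q_1>q$ and the assumption $q\le2$.
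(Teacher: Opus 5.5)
Your treatment of the estimates \eqref{eqn:extension-estimates} is fine, and it follows the route the paper intends; the paper gives no proof beyond the construction and Lemmas~\ref{lm:q-ests} and~\ref{lm:pointwise-F-s-eta}. You use H\"older against $\nabla\eta\in L^{s}(\omega)$, $s<\infty$. For $|\nabla^{2}\eta||\xi|$ you pair $L^{2}\times L^{m}$ with $\frac{1}{q}=\frac{1}{2}+\frac{1}{m}$ and use $W^{1,q_1}(\omega)\hookrightarrow L^{m}(\omega)$, which is where $q\le 2$ and $q_1>q$ enter. One caveat: $\beta$ is a $\theta$-integral of $\xi$, so the ``pointwise'' bounds hold only with $|\xi|$ replaced by $\int_0^{2\pi}|\xi(s,z)|\,ds$. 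This is harmless, since $\|\beta\|_{L^{p}}\lesssim\|(1+|\eta|)\xi\|_{L^{p}}$.

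The genuine gap is in the membership part. Your step 1 fails, and the construction you take over from the paper has the same defect. The function $\beta$ is not $2\pi$-periodic in $\theta$: $\beta(r,2\pi,z)=-\partial_r\int_0^{2\pi}r(r+\tilde\eta)\xi\,ds$. This vanishes for all $r$ only if $\int_0^{2\pi}\xi\,d\theta=\int_0^{2\pi}\eta\xi\,d\theta=0$ for a.e.\ $z$. Otherwise $\mathbf{q}(\xi)$ jumps across the half-plane $\{\theta=0\}$, is not in $W^{1,1}$, and its distributional divergence has a singular part there.

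This is not a technicality. Suppose a field has no $\mathbf{e}_z$-component, vanishes on $\Gamma_{in/out}$, is divergence-free, and has normal trace $\xi\mathbf{e}_r\cdot\boldsymbol{\nu}^{\eta}$. By the divergence theorem it would satisfy $\int_\omega(R+\eta)\xi\,dA=0$, which fails e.g.\ for $\xi\ge0$, $\xi\not\equiv0$. The hypothesis that some $(\mathbf{q},\xi)\in\mathcal{T}^{\eta}$ exists does not give this, because such $\mathbf{q}$ may carry flux through $\Gamma_{in/out}$, where only its tangential part vanishes.

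To repair it, split off the $\theta$-average $\overline{h}(z)$ of the flux density $(R+\eta)\xi$. For the mean-free remainder, choose the radial component so that the $\theta$-integral defining $\beta$ is periodic. Carry $\overline{h}$ by an axisymmetric divergence-free corrector, e.g.\ from a stream function, whose $\mathbf{e}_z$-component leaves through $\Gamma_{in/out}$ in the normal direction.

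Your proposed route to $\partial_t\mathcal{F}^{s}_{\eta}(\xi)\in L^{2}(I;L^{2}(\Omega^{\eta}))$ also cannot work. The existence of $\mathbf{q}$ with $(\mathbf{q},\xi)\in\mathcal{T}^{\eta}$ constrains $\xi$, not the geometry. The statement assumes only $\eta\in L^{\infty}(I;H^{2}_{0}(\omega))$, with no time derivative at all.

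Yet $\mathcal{F}^{s}_{\eta}(\xi)$ contains the Piola term $\partial_\theta\tilde\eta\,\beta$, whose time derivative $\partial_t\partial_\theta\eta\,\beta$ needs $\partial_t\nabla\eta\in L^{2}_{t}L^{2}_{x}$. Composition with $\psi_\eta^{-1}$ adds the transport term $-\nabla\mathbf{Q}\,(\nabla\psi_\eta)^{-1}\partial_t\psi_\eta$, as in the proof of Proposition~\ref{prop:compactness}, and this needs $\partial_t\eta$. Even energy-class $\eta\in V_s$ gives no control of $\partial_t\nabla\eta$.

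So this part of the claim needs an extra hypothesis, such as $\partial_t\eta\in L^{2}(I;W^{1,2}(\omega))$ or $\eta$ smooth. That covers the places where the operator is applied on a smooth geometry ($\delta$, $\mathcal{R}_\varepsilon\delta$, $\mathcal{R}_\varepsilon\eta_\varepsilon$). For the limit geometry $\eta$ you would have to argue by approximation rather than by direct membership. The paper's construction paragraph checks only the divergence and the boundary values, so it does not supply this step either.
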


\subsection{A regularizing operator}\label{ssec:regularization-operator}
We recall a regularizing operator from \cite[p. 234]{LR14} and \cite[Lemma 4.1]{BS18} in the following:
\begin{lemma}
Let $\varepsilon>0$. There exists an operator 
\[
\mathcal{R}_{\varepsilon}:C^{0}\left(\overline{I};C^{0}\left(\overline{\omega}\right)\right)\mapsto C^{4}\left(I;C^{4}\left(\omega\right)\right)
\]
such that 
\begin{enumerate}[(a)]
    \item $\mathcal{R}_{\varepsilon} \to \delta$ uniformly, for every $\delta \in C^{0}\left(\overline{I};C^{0}\left(\overline{\omega}\right)\right)$.
\item For any $\delta$ as above we have $\mathcal{R}_{\varepsilon}\delta\to\delta$ in $L^{p}\left(I;X\right)$ for any $p \in [1, \infty]$ and any $X$ of the following: $L^{q}\left(\omega\right)$, $W^{1,q}(\omega)$, $C^{\gamma}(\omega)$ with $q\in [1,\infty]$ and $\gamma \in (0,1)$.
\item If $\partial_{t}\delta\in L^{p}\left(I\times\omega\right)$ it holds that $\partial_{t}\mathcal{R}_{\varepsilon}\delta=\mathcal{R}_{\varepsilon}\partial_{t}\delta\to\partial_{t}\delta$.

\item For any $\delta$ as above we have $\mathcal{R}_{\varepsilon}\delta\ge\delta$ and $\left\Vert \mathcal{R}_{\varepsilon}\delta\right\Vert _{L_{t,x}^{\infty}}\le\left\Vert \delta\right\Vert _{L_{t,x}^{\infty}}+\varepsilon$ 
\end{enumerate}
\end{lemma}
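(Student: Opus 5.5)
The plan is to build $\mathcal{R}_{\varepsilon}$ by extension, mollification and a small upward shift, as in \cite[p. 234]{LR14}. The operator will depend on $\delta$ through a mollification scale $\kappa=\kappa(\varepsilon,\delta)$, so it is a family of smoothings rather than a linear operator.

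First extend $\delta\in C^{0}(\overline{I};C^{0}(\overline{\omega}))$ to $E\delta\in C^{0}(\mathbb{R}\times\mathbb{R}^{2})$ with compact support:
\begin{itemize}
\item periodically in $\theta$, which is natural since $\theta$ is an angle;
\item by even reflection across $t=0$ and $t=T$, followed by a cut-off;
\item by even reflection across $z\in\{0,L\}$ (or by zero, since $\delta$ vanishes there in our applications).
\end{itemize}
Reflection extensions are bounded on $L^{q}$, $W^{1,q}$ and $C^{\gamma}$, commute with $\partial_{t}$ up to sign on reflected pieces, and satisfy $\Vert E\delta\Vert_{L^{\infty}}=\Vert\delta\Vert_{L^{\infty}}$.

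Next let $\varrho_{\kappa}$ be a standard space-time mollifier and set $\delta_{\kappa}:=(\varrho_{\kappa}*E\delta)|_{I\times\omega}$. This function is $C^{\infty}$, hence in $C^{4}(I;C^{4}(\omega))$, and satisfies $\Vert\delta_{\kappa}\Vert_{L^{\infty}}\le\Vert\delta\Vert_{L^{\infty}}$. Since $E\delta$ is uniformly continuous, $s(\kappa):=\Vert\delta_{\kappa}-\delta\Vert_{L^{\infty}(I\times\omega)}\to0$ as $\kappa\to0$. Choose $\kappa=\kappa(\varepsilon)\le\varepsilon$ with $s(\kappa)\le\varepsilon/2$, and define
\[
\mathcal{R}_{\varepsilon}\delta:=\delta_{\kappa(\varepsilon)}+s(\kappa(\varepsilon)).
\]

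Then (d) is immediate: $\mathcal{R}_{\varepsilon}\delta\ge\delta_{\kappa}+(\delta-\delta_{\kappa})=\delta$, and $\Vert\mathcal{R}_{\varepsilon}\delta\Vert_{\infty}\le\Vert\delta\Vert_{\infty}+\varepsilon/2$. Property (a) follows from $\Vert\mathcal{R}_{\varepsilon}\delta-\delta\Vert_{\infty}\le2s(\kappa)\le\varepsilon$. For (c), the shift is constant in $t$ and $x$, so $\partial_{t}\mathcal{R}_{\varepsilon}\delta=\varrho_{\kappa}*\partial_{t}E\delta$. This equals $\mathcal{R}_{\varepsilon}$ (without shift) applied to $\partial_{t}\delta$, and it converges to $\partial_{t}\delta$ in $L^{p}$ for $p<\infty$ by standard mollifier theory. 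For (b), the shift tends to $0$ in every norm listed, so it suffices to treat $\delta_{\kappa}\to\delta$ in $L^{p}(I;X)$. Here one needs $\delta\in L^{p}(I;X)$, which is implicit in the statement. For $X=L^{q}$ or $W^{1,q}$ with $q<\infty$ and $p<\infty$, convergence follows from continuity of translations in $L^{p}(I;X)$ and the boundedness of $E$, since mollification commutes with spatial derivatives. The case $q=\infty$ for $L^{\infty}$ is (a).

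The main obstacle is making (b) precise in the endpoint spaces $p=\infty$, $W^{1,\infty}$ and $C^{\gamma}$. There, mollifiers do not converge for general elements of the space, only on the closure of smooth functions. I would first try to interpolate: if $\delta\in L^{p}(I;C^{\gamma'})$ for some $\gamma'>\gamma$, then
\[
\Vert\delta_{\kappa}-\delta\Vert_{C^{\gamma}}\lesssim\Vert\delta_{\kappa}-\delta\Vert_{\infty}^{1-\gamma/\gamma'}\,\Vert\delta\Vert_{C^{\gamma'}}^{\gamma/\gamma'},
\]
which gives convergence. In our application, $\delta=\eta\in L^{\infty}(I;H^{2})\hookrightarrow L^{\infty}(I;C^{\gamma'})$ for every $\gamma'<1$, so this suffices. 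The same reasoning, with $W^{1,q}$ for all $q<\infty$ coming from $H^{2}$, and uniform-in-time continuity from $\eta\in C(I;W^{1,2})$, handles $p=\infty$.
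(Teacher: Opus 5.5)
The paper gives no proof of this lemma. It recalls the operator from \cite{LR14} and \cite{BS18}, which, as the identity in (c) already indicates, is a space--time mollification at the fixed scale $\varepsilon$ after extension, and hence linear. Your core construction (extend, then mollify) is the same. What is genuinely different is the constant upward shift $s(\kappa)$ combined with the $\delta$-dependent scale $\kappa(\varepsilon,\delta)$. You are right that some nonlinearity is forced by (d): a linear $\mathcal{R}_\varepsilon$ with $\mathcal{R}_\varepsilon(\pm\delta)\ge\pm\delta$ would be the identity, which does not map $C^0$ into $C^4$. With your modification, (a) and (d) hold literally. Properties (b) and (c) hold in the restricted sense you describe, and that is the correct reading, since the endpoint cases of (b) fail for general $\delta$.

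Be aware of what this costs.

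First, the identity $\partial_t\mathcal{R}_\varepsilon\delta=\mathcal{R}_\varepsilon\partial_t\delta$ does not hold for your operator. Apart from the shift and the $\delta$-dependent scale, $\partial_t$ of your even time-reflection $E\delta$ is the \emph{odd} reflection of $\partial_t\delta$. So within distance $\kappa$ of $t=0$ and $t=T$, the function $(\varrho_\kappa*\partial_t E\delta)|_{I\times\omega}$ is not a mollification of the even extension of $\partial_t\delta$. Only the convergence $\partial_t\mathcal{R}_\varepsilon\delta\to\partial_t\delta$ in $L^p$, $p<\infty$, survives, and you should state (c) that way rather than claim the equality.

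Second, and more important for how the lemma is used: in Subsection~\ref{ssec:set-v-fixed-point}, $\varepsilon$ is fixed while $\delta$ ranges over $D$.
\begin{itemize}
\item The closed-graph step needs $\delta_n\to\delta$ in $C^0$ to give $\phi_{\mathcal{R}_\varepsilon\delta_n}\to\phi_{\mathcal{R}_\varepsilon\delta}$ and $\boldsymbol{\nu}^{\mathcal{R}_\varepsilon\delta_n}\to\boldsymbol{\nu}^{\mathcal{R}_\varepsilon\delta}$ uniformly.
\item The compactness of $F(D)$ is argued via the regularized geometry, which needs bounds on $\mathcal{R}_\varepsilon\delta$ that are uniform over $D$.
\end{itemize}
The fixed-scale operator gives $\|\mathcal{R}_\varepsilon\delta_1-\mathcal{R}_\varepsilon\delta_2\|_{C^4}\le C(\varepsilon)\|\delta_1-\delta_2\|_\infty$. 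Your $\kappa(\varepsilon,\delta)$ can jump as $\delta$ varies. Since $D$ carries no uniform modulus of continuity, it also becomes arbitrarily small on $D$. So neither property holds, not even $C^0$-continuity of $\delta\mapsto\mathcal{R}_\varepsilon\delta$.

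There are two ways around this. If you want (d) without losing continuity, keep the scale fixed and replace the shift by a sup-convolution before mollifying: $\mathcal{R}_\varepsilon\delta:=\varrho_\varepsilon*\big(\sup_{|h|\le\varepsilon}E\delta(\cdot-h)\big)$. This is pointwise $\ge\delta$, bounded by $\|\delta\|_\infty$, and Lipschitz from $C^0$ into every $C^k$, at the price of having to re-prove (b) and (c). Alternatively, note that the ordering $\mathcal{R}_\varepsilon\delta\ge\delta$ is not used anywhere in the paper. The linear operator, with $\|\mathcal{R}_\varepsilon\delta\|_\infty\le\|\delta\|_\infty$, is therefore the more useful object.
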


\begin{remark}
    For any $\mathbf{v}\in L^{2}\left(I;L^{2}\left(\mathbb{R}^{3}\right)\right)$ we  define $\mathcal{R}_{\varepsilon}\mathbf{v}$ as the usual regularization in time and space, obtained by convolution with a mollifier.
\end{remark}

\section{Proof of the main result}\label{sec:proof-main}
\paragraph{Plan of the proof of Theorem~\ref{thm:main}}
\begin{enumerate}
\item In Subsection~\ref{ssec:compactness} we establish the $L^{2}$ compactness arguments needed in the proof. They follow closely the ones from \cite[Subsection 3.1]{LR14}, but they are slightly modified in order to be adapted to our Navier slip situation. 
    \item In order to eliminate the double role of $\eta$, as an unknown in the shell equation but in the same time prescribing (unknown) moving domains $\Omega^{\eta}$, we \emph{decouple} the problem. By this, we mean that we replace $\Omega^{\delta}$ for \emph{any} given and sufficiently smooth mapping $\delta : I \times \omega\mapsto \mathbb{R}$. The details are presented in Subsection~\ref{ssec:decoupled}.
    \item We solve the corresponding system by means of Galerkin approximations in Subsection~\ref{ssec:construction-weak}
    \item To recover the moving domains $\Omega^{\eta}$ we perform a set-valued fixed-point procedure, explained in Subsection~\ref{ssec:set-v-fixed-point}.
    \item Finally we perform a limit passage for the sequence of suitable approximations of the original problem. 
\end{enumerate}
\subsection{The compactness argument}\label{ssec:compactness}
We present here an $L^{2}$ in time and space compactness argument which will be used several times in the proofs. 
As usual when dealing with incompressible Navier-Stokes equations we need to treat the $L^{2}$ compactness of an approximating sequence $\mathbf{u}_{n} \to \mathbf{u}$ in order to pass the limit in the convective term. 
For this, we proceed in a similar manner as in \cite[Proposition 3.8]{lengeler2014weak}. We have also presented the details in 
\cite[Section 4]{mindrila-roy-multuilayered}. We will therefore insist only on the main differences.
First, let us recall the extension operator $\mathcal{F}_{\eta}$  from Proposition~\ref{prop:extension-no-slip}. Although it will make the slip term (containing $\frac{1}{\alpha}$) vanish, its use suffices to prove the  compactness argument and simplify the estimates that will appear.

    With the aid of Proposition~\ref{prop:extension-no-slip} we may now prove the following:

\begin{proposition}\label{prop:compactness}
    Let  $\left(\mathbf{u}_{n},\eta_{n}\right)\in\mathcal{S}^{\eta_{n}}$ be a sequence of solutions of \eqref{eqn:weak-formulation} with initial data 
    \[\left(\mathbf{u}_{n}\left(0,\cdot\right),\eta_{n}\left(0,\cdot\right),\partial_{t}\eta_{n}\left(0,\cdot\right)\right)=:\left(\mathbf{u}_{0n},\eta_{0n},\eta_{1n}\right)\in\mathcal{S}^{\eta_{0n}}\]
    having finite energy, that is  
    \begin{equation}
    \sup_{n\ge1}E_{n}\left(0\right)<\infty .
    \end{equation}

  Then there exists a subsequence of the sequence $\left(\mathbf{u}_{n},\partial_{t}\eta_{n}\right)_{n\ge1}$ which we do not relabel and a pair $\left(\mathbf{u},\eta\right)\in\mathcal{S}^{\eta}$ such that 
  \begin{equation}\label{eqn:weak-conv-comp}
\left(\mathbf{u}_{n}\chi_{\Omega^{\eta_{n}}},\partial_{t}\eta_{n}\right)\to\left(\mathbf{u},\partial_{t}\eta\right)\quad\text{in}\ L^{2}\left(I;L^{2}\left(\mathbb{R}^{3}\right)\times L^{2}\left(\omega\right)\right)
  \end{equation}
 where  $\chi_{A}$ denotes the characteristic function of a set $A$.
\end{proposition}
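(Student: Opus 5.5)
Since the paper closely follows \cite[Proposition 3.8]{lengeler2014weak} and \cite[Section 4]{mindrila-roy-multuilayered}, I would reduce the claim to a weak-times-weak argument in $L^{2}$. The main identity is
\[
\int_{I}\int_{\Omega^{\eta_{n}}}|\mathbf{u}_{n}|^{2}+\int_{I}\int_{\omega}|\partial_{t}\eta_{n}|^{2}\;\longrightarrow\;\int_{I}\int_{\Omega^{\eta}}|\mathbf{u}|^{2}+\int_{I}\int_{\omega}|\partial_{t}\eta|^{2}.
\]
Combined with weak convergence, this gives strong convergence of the pair $(\mathbf{u}_{n}\chi_{\Omega^{\eta_{n}}},\partial_{t}\eta_{n})$.

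\textbf{Step 1 (uniform bounds and weak limits).} The energy inequality \eqref{eqn:energy}, with $\sup_n E_n(0)<\infty$, gives uniform bounds on $\eta_{n}$ in $V_{s}$, on $\mathbf{u}_{n}\chi_{\Omega^{\eta_n}}$ in $L^{\infty}_{t}L^{2}_{x}$, and on $\nabla \mathbf{u}_{n}$ in $L^{2}_tL^{r}_x$ for $r<2$ (Proposition~\ref{prop:korn}). Hence, up to a subsequence:
\begin{itemize}
\item $\eta_{n}\to\eta$ weakly-$*$ in $V_s$ and uniformly, by \eqref{eqn:comp-emd-AL} and the embedding $W^{1,2}\hookrightarrow$ Hölder in time;
\item $\partial_t\eta_n\rightharpoonup\partial_t\eta$ weakly-$*$;
\item $\mathbf{u}_{n}\chi_{\Omega^{\eta_{n}}}\rightharpoonup\mathbf{u}\chi_{\Omega^{\eta}}$ weakly-$*$.
\end{itemize}
By uniform convergence of the domains, the limit vanishes outside $\Omega^{\eta}$. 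The traces $\mathbf{u}_n\circ\phi_{\eta_n}$ are bounded in $L^2_tW^{1-1/r,r}_x$ (Lemma~\ref{lm:trace}), so the normal coupling condition passes to the limit and $(\mathbf{u},\eta)\in\mathcal{S}^{\eta}$.

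\textbf{Step 2 (splitting the product).} I would write the energy as the pairing of $(\mathbf{u}_n,\partial_t\eta_n)$ with itself and split the test pair into a part with controlled time derivative and a small remainder. For fixed $\varepsilon>0$ I define the regularized shell test function $\xi_n^\varepsilon=\mathcal{R}_\varepsilon\partial_t\eta_n$ and the fluid test function
\[
\mathbf{q}_n^\varepsilon=\mathcal{F}_{\eta_n}(\xi_n^\varepsilon)+\big(\text{a mollified, divergence-free correction of }\mathbf{u}_n-\mathcal{F}_{\eta_n}(\partial_t\eta_n)\big).
\]
The correction is compactly supported inside $\Omega^{\eta_n}$ after a small inward squeeze of the domain, so the pair is admissible. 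The no-slip extension $\mathcal{F}_{\eta_n}$ of Proposition~\ref{prop:extension-no-slip} is used here because it makes $\mathbf{q}\circ\phi_{\eta_n}-\xi\mathbf{e}_r=0$; the slip integral then disappears from \eqref{eqn:weak-formulation} for these tests. Testing \eqref{eqn:weak-formulation} with $(\mathbf{q}_n^\varepsilon,\xi_n^\varepsilon)$ controls the time derivatives of
\[
t\mapsto \int_{\Omega^{\eta_n}}\mathbf{u}_n\cdot\mathbf{q}^\varepsilon_n+\int_\omega\partial_t\eta_n\,\xi_n^\varepsilon
\]
in a negative Sobolev space, uniformly in $n$. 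An Aubin--Lions/Arzelà argument in $t$ (as in \cite{LR14}) then gives
\[
\int_I\Big(\int_{\Omega^{\eta_n}}\mathbf{u}_n\cdot\mathbf{q}^\varepsilon_n+\int_\omega\partial_t\eta_n\,\xi^\varepsilon_n\Big)\to\int_I\Big(\int_{\Omega^{\eta}}\mathbf{u}\cdot\mathbf{q}^\varepsilon+\int_\omega\partial_t\eta\,\xi^\varepsilon\Big)
\]
for each fixed $\varepsilon$. Note that $\mathbf{q}^\varepsilon_n$ converges strongly because $\eta_n\to\eta$ uniformly and by the estimates \eqref{eqn:cons-ext-1}--\eqref{eqn:cons-ext-2}.

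\textbf{Step 3 (remainders, the main obstacle).} What remains is to show that
\[
\|\mathbf{u}_n-\mathbf{q}^\varepsilon_n\|_{L^2_tL^2_x}+\|\partial_t\eta_n-\xi^\varepsilon_n\|_{L^2_tL^{2}_x}\to0 \quad\text{as } \varepsilon\to0,
\]
uniformly in $n$. For the shell component, $\partial_t\eta_n$ is only bounded in $L^2_x$, so I would pass to a negative norm and use interpolation. The key point is that $\partial_t\eta_n=\mathbf{u}_n\circ\phi_{\eta_n}\cdot\boldsymbol{\nu}^{\eta_n}/(\mathbf{e}_r\cdot\boldsymbol{\nu}^{\eta_n})$, which is bounded in $L^2_tW^{s,r}_x$ for some $s>0$ by Lemma~\ref{lm:trace}; this gives uniform smallness of $\partial_t\eta_n-\mathcal{R}_\varepsilon\partial_t\eta_n$ in $L^2_tL^2_x$. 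The slip setting enters only here: the tangential trace of $\mathbf{u}_n$ differs from $\partial_t\eta_n\mathbf{e}_r$, so the fluid correction must absorb a boundary mismatch. For the fluid part, the bound in $L^2_tW^{1,r}_x$ with $r<2$ close to $2$ and Sobolev embedding give $\mathbf{u}_n\in L^2_tW^{s,2}_x$, so mollification error is $O(\varepsilon^{s})$. The boundary layer of width $\sim\varepsilon$ near $\Gamma^{\eta_n}$ is handled by Hardy-type control through the $L^6{-}$ integrability from Remark~\ref{rmk:loss}, giving $\|\mathbf{u}_n\|_{L^2(\text{layer})}\lesssim\varepsilon^{\kappa}$. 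Restoring the divergence-free property after the squeeze should cost a small error via a Bogovskii-type correction in the fixed interior region; I expect this is where the estimates require the most care. Letting first $n\to\infty$ and then $\varepsilon\to0$ yields convergence of norms, hence \eqref{eqn:weak-conv-comp}.
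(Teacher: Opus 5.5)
There is a genuine gap in Step~2, and Step~2 is where all of the time compactness has to come from. Your test pair $(\mathbf{q}_n^\varepsilon,\xi_n^\varepsilon)$ is built from the solution itself.

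\emph{If the regularizations act only in space.} The pair is then not admissible in \eqref{eqn:weak-formulation}: $\partial_t\mathbf{q}_n^\varepsilon$ contains $\partial_t\mathbf{u}_n$ and $\partial_t\xi_n^\varepsilon$ contains $\partial_t^2\eta_n$. So the term $\int\mathbf{u}_n\cdot\partial_t\mathbf{q}_n^\varepsilon$ is as uncontrolled as the quantity you want to estimate. Your claim that $\mathbf{q}_n^\varepsilon$ converges strongly also fails in this case. Spatial mollification alone does not turn a merely weakly convergent sequence in $L^2_tL^2_x$ into a strongly convergent one (consider $\sin(nt)\varphi(x)$), and uniform convergence of $\eta_n$ together with \eqref{eqn:cons-ext-1}--\eqref{eqn:cons-ext-2} does not help. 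Finally, even an equicontinuity bound for the scalar function $t\mapsto\int\mathbf{u}_n\cdot\mathbf{q}_n^\varepsilon+\int\partial_t\eta_n\,\xi_n^\varepsilon$ would, via Arzel\`a--Ascoli, only give a convergent subsequence with an \emph{unidentified} limit. It would not show that the limit is $\int\mathbf{u}\cdot\mathbf{q}^\varepsilon+\int\partial_t\eta\,\xi^\varepsilon$.

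\emph{If you also regularize in time.} Step~2 becomes a trivial weak--strong product. But Step~3 then needs $\|\partial_t\eta_n-\mathcal{R}_\varepsilon\partial_t\eta_n\|_{L^2_tL^2_x}\to0$ and the fluid analogue uniformly in $n$. That is uniform $L^2$ time-equicontinuity of $(\mathbf{u}_n,\partial_t\eta_n)$, which is the proposition itself. The spatial bounds you invoke ($L^2_tW^{s,r}_x$, $L^2_tW^{1,r}_x$) cannot deliver it. Either way the argument is circular.

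Your splitting into $\mathcal{F}_{\eta_n}(\partial_t\eta_n)$ plus the remainder $\mathbf{u}_n-\mathcal{F}_{\eta_n}(\partial_t\eta_n)$ agrees with the paper. The missing idea, which the paper takes from \cite{LR14}, is to prove time-equicontinuity only against test functions that do not depend on the solution, uniformly over a bounded set of them. The paper shows that $c_{\xi,n}(t)=\int_{\Omega^{\eta_n}(t)}\mathbf{u}_n\cdot\mathcal{F}_{\eta_n}(\xi)+\int_\omega\partial_t\eta_n\,\xi$ is H\"older in $t$, uniformly in $n$ and in $\|\xi\|_{H^2}\le1$. It shows the same for $d_{n,\mathbf{q}}(t)=\int_{\Omega^{\eta_n}(t)}\mathbf{u}_n\cdot\mathcal{J}_{\eta_n}\mathbf{q}$ with $\mathbf{q}$ bounded in the fixed reference space $H_N$. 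These pairs are admissible because their time dependence enters only through $\eta_n$. Arzel\`a--Ascoli then gives $\sup_t\sup_{\xi}|c_{\xi,n}(t)-c_\xi(t)|\to0$ and its $H_N$ analogue, i.e.\ convergence in $C(\overline{I};X^{*})$ for fixed spaces $X$. Only afterwards are the solution-dependent functions $\partial_t\eta_n(t)$ and $\mathcal{M}_\rho\mathcal{J}_{\eta_n}^{-1}(\mathbf{u}_n-\mathcal{F}_{\eta_n}(\partial_t\eta_n))(t)$ inserted at fixed times. The resulting error is paid for by spatial regularity, by balancing $\rho^{-K}$ against the dual-norm error. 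Your Step~3 observations, such as $\partial_t\eta_n\in L^2_tW^{s,r}_x$ via the normal trace, are the right tools for that second stage.

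A secondary point concerns the fluid remainder. The paper handles it through the Piola pullback, which preserves the zero normal trace, and simply bounds the non-vanishing slip term by the slip dissipation. Your compactly supported correction avoids that term. However, it then requires a uniform Bogovskii-type estimate in a thin layer along a merely H\"older boundary, which you leave unjustified.
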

\begin{proof}
Let us start by noticing that from \eqref{eqn:energy-ineq}
we obtain that 
\begin{equation}\label{eqn:sup-n-energies}
\sup_{n\ge1}\left(\sup_{t\in\left(0,T\right)}E_{n}\left(t\right)+\int_{0}^{T}E_{slip,n}\left(t\right)dt+\int_{0}^{T}\int_{\Omega^{\eta_{n}}\left(t\right)}\left|\nabla\mathbf{u}_{n}\right|^{r}dxdt\right)<\infty
\end{equation}
for all $r<2$. This justifies the existence of a weak limit $\left(\mathbf{u}, \eta \right)$ as in \eqref{eqn:weak-conv-comp} and of a subsequence converging weakly to it.
    In the sequel we shall follow the same plan that was detailed in \cite[Section 4]{mindrila-roy-multuilayered}, with the appropriate adjustments.  
    \begin{enumerate}
        \item 
     First, by considering, for any $\xi \in H_{0}^{2}(\omega)$, the functionals \begin{equation}
        \begin{aligned}c_{\xi,n}\left(t\right):= & \int_{\Omega^{\eta_{n}}\left(t\right)}\mathbf{u}_{n}\cdot\mathcal{F}_{\eta_{n}}\left(\xi\right)dx+\int_{\omega}\partial_{t}\eta_{n}\xi dA\\
c_{\xi}\left(t\right):= & \int_{\Omega^{\eta}\left(t\right)}\mathbf{u}\cdot\mathcal{F}_{\eta}\left(\xi\right)dx+\int_{\omega}\partial_{t}\eta\xi dA
\end{aligned}
    \end{equation}
we  prove that 
\begin{equation}\label{eqn:compact-1}
    \sup_{t\in I}\sup_{\left\Vert \xi\right\Vert _{H_{x}^{2}}\le1}\left|c_{\xi,n}-c_{\xi}\right|\xrightarrow{n\to\infty}0
\end{equation}

To justify \eqref{eqn:compact-1} the main effort is to prove that there exists $\alpha \in (0,1)$ for which 
\begin{equation}
    \sup_{n\ge1}\sup_{t\neq s}\frac{\left|c_{\xi,n}\left(t\right)-c_{\xi,n}\left(s\right)\right|}{\left|t-s\right|^{\alpha}}<\infty 
\end{equation}
and to use the Arzela- Ascoli theorem.
The situation is analogue to  \cite[Section 4]{mindrila-roy-multuilayered}. The boundary term appearing from the slip condition vanishes, namely   for $\mathbf{q}=\mathcal{F}_{\eta_{n}}\left(\xi\right)$.
  
Let us recall the properties of the extension operator from Proposition~\ref{prop:extension-no-slip}.
To estimate the terms, we exemplify our argument on the diffusive term.
We have
\begin{equation}
\begin{aligned}
\int_{s}^{t}\int_{\Omega^{\eta_{n}\left(\tau\right)}}\mathbb{D}\mathbf{u}_{n}:\mathbb{D}\mathcal{F}_{\eta_{n}}\left(\xi\right)\le & \left|t-s\right|^{1/2}\left\Vert \mathbb{D}\mathbf{u}_{n}\right\Vert _{L_{t}^{2}L_{x}^{2}}\left\Vert \mathbb{D}\mathcal{F}_{\eta_{n}}\left(\xi\right)\right\Vert _{L_{t}^{\infty}L_{x}^{2}}\\
\lesssim & \left|t-s\right|^{1/2}\left\Vert \nabla\mathcal{F}_{\eta_{n}}\left(\xi\right)\right\Vert _{L_{t}^{\infty}L_{x}^{2}}\\
\lesssim & \left|t-s\right|^{1/2}\left\Vert \nabla\left(\eta_{n}\xi\right)\right\Vert _{L_{t}^{\infty}L_{x}^{2}}\\
\lesssim & \left|t-s\right|^{1/2}
\end{aligned}
\end{equation}
The other estimates follow similarly, see  \cite[Section 4.1]{mindrila-roy-multuilayered} for more details.
\item Next, having obtained that
\begin{equation}\label{eqn:unif-conv-H2-c-xi}
    h_{n}\left(t\right):=\sup_{\left\Vert \xi\right\Vert _{H_{x}^{2}}\le1}\left|c_{\xi,n}\left(t\right)-c_{\xi}\left(t\right)\right|\to0\quad\text{in}\ C\left(\overline{I}\right)
\end{equation}
 we extend it to functions $\xi \in L^{2}$, namely 
    \begin{equation}\label{eqn:unif-conv-xi-l2}
        g_{n}\left(t\right):=\sup_{\left\Vert \xi\right\Vert _{L_{x}^{2}}\le1}\left|c_{\xi,n}\left(t\right)-c_{\xi}\left(t\right)\right|\to0\quad\text{in}\ C\left(\overline{I}\right).
    \end{equation}
    
From \eqref{eqn:unif-conv-xi-l2}, we can choose $\xi:=\partial_{t}\eta_{n}\left(t,\cdot\right)$ in \eqref{eqn:unif-conv-xi-l2} and obtain that 
\begin{equation}\label{eqn:comp-u-eta-1}
\int_{I}\int_{\Omega^{\eta_{n}}\left(t\right)}\mathbf{u}_{n}\cdot\mathcal{F}_{\eta_{n}}\left(\partial_{t}\eta_{n}\right)+\int_{I}\int_{\omega}\left(\partial_{t}\eta_{n}\right)^{2}\to\int_{I}\int_{\Omega^{\eta}\left(t\right)}\mathbf{u}\cdot\mathcal{F}_{\eta}\left(\partial_{t}\eta\right)+\int_{I}\int_{\omega}\left(\partial_{t}\eta\right)^{2}
\end{equation}

\item  The remaining part of the compactness argument, complementary to \eqref{eqn:comp-u-eta-1}, is to prove that

\begin{equation}\label{eqn:comp-2}
\int_{I}\int_{\Omega^{\eta_{n}}\left(t\right)}\mathbf{u}_{n}\cdot\left(\mathbf{u}_{n}-\mathcal{F}_{\eta_{n}}\left(\partial_{t}\eta_{n}\right)\right)\to\int_{I}\int_{\Omega^{\eta}\left(t\right)}\mathbf{u}\cdot\left(\mathbf{u}-\mathcal{F}_{\eta}\left(\partial_{t}\eta\right)\right)
\end{equation}

For this,  similarly to \eqref{eqn:unif-conv-H2-c-xi}, we first prove that 
\begin{equation}\label{eqn:comp-unif-W1n}
\int_{\Omega^{\eta_{n}}\left(t\right)}\mathbf{u}_{n}\cdot\mathcal{J}_{\eta_{n}}\left[\mathbf{q}\right]\left(t\right)dx\to\int_{\Omega^{\eta}\left(t\right)}\mathbf{u}\cdot\mathcal{J}_{\eta}\left[\mathbf{q}\right]\left(t\right)dx\quad\text{in}\ C\left(\overline{I}\right)
\end{equation}
uniformly for all $\mathbf{q}\in H_{N}:=\left\{ \mathbf{f}:\mathbf{f}\in W_{\text{div}}^{1,N}\left(\Omega\right),\ \text{tr}_{\Gamma}\mathbf{f}\cdot\mathbf{e}_{r}=0= \text{tr}_{\Gamma_{in/out}}\mathbf{f}\cdot\mathbf{\tau}_{1,2}\right\} $ for a sufficiently large $N>1$.
We briefly sketch the argument leading to \eqref{eqn:comp-unif-W1n}. For $\mathbf{q}\in H_{N}$ let us denote 
\begin{equation}
d_{n,\mathbf{q}}\left(t\right):=\int_{\Omega^{\eta_{n}}\left(t\right)}\mathbf{u}_{n}\cdot\mathcal{J}_{\eta_{n}}\mathbf{q}dx,\quad d_{\mathbf{q}}\left(t\right):=\int_{\Omega^{\eta}\left(t\right)}\mathbf{u}\cdot\mathcal{J}_{\eta}\mathbf{q}dx
\end{equation}

We show that there exists $\lambda \in [0,1)$ for which 
\begin{equation}
\left|d_{n,\mathbf{q}}\left(t\right)-d_{n,\mathbf{q}}\left(s\right)\right|\lesssim\left|t-s\right|^{\lambda}.
\end{equation}
We use the test function $\left(\mathcal{J}_{\eta_{n}}\mathbf{q},0\right)\in\mathcal{T}^{\eta_{n}}$.

By integrating \eqref{eqn:formal-weak-formulation} in time on $(s,t)$ we obtain that for all $0<s<t<T$ 
\begin{equation}
\begin{aligned}d_{\mathbf{q},n}\left(t\right)-d_{\mathbf{q},n}\left(s\right)= & \int_{s}^{t}\int_{\Omega^{\eta_{n}}\left(\tau\right)}\mathbf{u}_{n}\cdot\partial_{t}\mathcal{J}_{\eta_{n}}\mathbf{q}-\mathbb{D}\mathbf{u}_{n}:\mathbb{D}\mathcal{J}_{\eta_{n}}\mathbf{q}-\int_{s}^{t}b\left(\tau,\mathbf{u}_{n},\mathbf{u}_{n},\mathcal{J}_{\eta_{n}}\mathbf{q}\right)+\\
 & \frac{1}{2}\int_{s}^{t}\int_{\Gamma^{\eta}\left(\tau\right)}\left(\mathbf{u}_{n}\cdot\mathcal{J}_{\eta_{n}}\mathbf{q}\right)\left(\partial_{t}\eta_{n}\mathbf{e}_{r}\circ\phi_{\eta_{n}\left(t\right)}^{-1}\right)\cdot\boldsymbol{\nu}^{\eta_{n}\left(t\right)}+\\
 & \frac{1}{\alpha}\int_{s}^{t}\int_{\omega}\left(\mathbf{u}_{n}\circ\phi_{\eta_{n}\left(t\right)}-\partial_{t}\eta_{n}\mathbf{e}_{r}\right)\cdot\mathcal{J}_{\eta_{n}}\mathbf{q}\circ\phi_{\eta_{n}\left(t\right)}J_{\eta_{n}\left(t\right)}+\\
 & \left\langle F\left(t\right),\mathcal{J}_{\eta_{n}}\mathbf{q}\right\rangle +\int_{\Omega^{\eta_{n}\left(0\right)}}\mathbf{u}_{0n}\cdot\mathcal{J}_{\eta_{n}}\mathbf{q}\left(0\right)
\end{aligned}
\end{equation}
And we proceed to  estimate each term, as follows: 
\begin{equation}
\begin{aligned}\int_{s}^{t}\int_{\Omega^{\eta_{n}}\left(\tau\right)}\mathbb{D}\mathbf{u}_{n}:\mathbb{D}\mathcal{J}_{\eta_{n}}\mathbf{q}\le & \left\Vert \mathbb{D}\mathbf{u}_{n}\right\Vert _{L_{t}^{2}L_{x}^{2}}\left\Vert \mathbb{D}\mathcal{J}_{\eta_{n}}\mathbf{q}\right\Vert _{L_{t}^{\infty}L_{x}^{2}}\left|t-s\right|^{1/2}\\
\lesssim & \left\Vert \mathcal{J}_{\eta_{n}}\mathbf{q}\right\Vert _{L_{t}^{\infty}W_{x}^{1,2}}\left|t-s\right|^{1/2}\\
\lesssim & \left|t-s\right|^{1/2}.
\end{aligned}
\end{equation}
Concerning the convective term, we use Remark~\ref{rmk:loss} to get

\begin{equation}
\begin{aligned}\left|\int_{s}^{t}b\left(\tau,\mathbf{u}_{n},\mathbf{u}_{n},\mathcal{J}_{\eta_{n}}\mathbf{q}\right)\right|\le & \int_{s}^{t}\int_{\Omega^{\eta_{n}}\left(\tau\right)}\left|\mathbf{u}_{n}\right|\left|\nabla\mathbf{u}_{n}\right|\left|\mathcal{J}_{\eta_{n}}\mathbf{q}\right|+\left|\mathbf{u}_{n}\right|^{2}\left|\nabla\mathcal{J}_{\eta_{n}}\mathbf{q}\right|\\
\lesssim & \left\Vert \mathbf{u}_{n}\right\Vert _{L_{t}^{10/3}L_{x}^{10/3-}}\left\Vert \nabla\mathbf{u}_{n}\right\Vert _{L_{t}^{2}L_{x}^{2-}}\left\Vert \mathcal{J}_{\eta_{n}}\mathbf{q}\right\Vert _{L_{t}^{\infty}L_{x}^{5+}}\left|t-s\right|^{1/5}+\\
 & \left\Vert \mathbf{u}_{n}\right\Vert _{L_{t}^{10/3}L_{x}^{10/3-}}\left\Vert \mathbf{u}_{n}\right\Vert _{L_{t}^{2}L_{x}^{6-}}\left\Vert \nabla\mathcal{J}_{\eta_{n}}\mathbf{q}\right\Vert _{L_{t}^{\infty}L_{x}^{2+}}\left|t-s\right|^{1/30}\\
\lesssim & \left\Vert \mathbf{q}\right\Vert _{L_{t}^{\infty}L_{x}^{5+}}\left|t-s\right|^{1/5}+\left\Vert \nabla\mathbf{q}\right\Vert _{L_{t}^{\infty}L_{x}^{2+}}\left|t-s\right|^{1/30}\\
\lesssim & \left|t-s\right|^{1/30}
\end{aligned}
\end{equation}
for $\mathbf{q}\in H_{N}$ with $N>2$.
The boundary terms are estimated in a similar manner.

The most tedious estimate is the one of the time-derivative term. Let us  denote, for $\mathbf{q}\in H_{N}$  $\mathbf{Q}:=\frac{\nabla\psi_{\eta_{n}}}{\det\nabla\psi_{\eta_{n}}}\mathbf{q}$. Thus, $\mathcal{J}_{\eta_{n}}\mathbf{q}=\mathbf{Q}\left(t,\psi_{\eta_{n}\left(t\right)}^{-1}\right)$.
By the chain rule we have 
\[
\partial_{t}\mathcal{J}_{\eta_{n}}\mathbf{q}\left(x\right)=\partial_{t}\mathbf{Q}\left(t,\psi_{\eta_{n}\left(t\right)}^{-1}\left(x\right)\right)+\nabla\mathbf{Q}\left(t,\psi_{\eta_{n}\left(t\right)}^{-1}\left(x\right)\right)\partial_{t}\psi_{\eta_{n}\left(t\right)}^{-1}\left(t,x\right),\ x\in\Omega^{\eta_{n}}\left(t\right)
\]
which for $x=\psi_{\eta_{n}\left(t\right)}\left(X\right)$ with $X\in \Omega$ becomes
\[
\partial_{t}\mathcal{J}_{\eta_{n}}\mathbf{q}\left(\psi_{\eta_{n}\left(t\right)}\left(X\right)\right)=\partial_{t}\mathbf{Q}\left(t,X\right)+\nabla\mathbf{Q}\left(t,X\right)\partial_{t}\psi_{\eta_{n}\left(t\right)}^{-1}\left(t,\psi_{\eta_{n}\left(t\right)}\left(X\right)\right),\ X\in\Omega
\]
Now, since $\psi_{\eta_{n}}^{-1}\left(t,\psi_{\eta_{n}}\left(X\right)\right)=X$ by taking time-derivatives we get
\[
\partial_{t}\psi_{\eta_{n}\left(t\right)}^{-1}\left(t,\psi_{\eta_{n}\left(t\right)}\left(X\right)\right)+\nabla\psi_{\eta_{n}\left(t\right)}^{-1}\left(t,\psi_{\eta_{n}\left(t\right)}\left(X\right)\right)\partial_{t}\psi_{\eta_{n}\left(t\right)}\left(X\right)=0
\]
and $
\nabla\psi_{\eta_{n}\left(t\right)}^{-1}\left(t,\psi_{\eta_{n}\left(t\right)}\left(X\right)\right)\nabla\psi_{\eta_{n}\left(t\right)}\left(X\right)=\mathbb{I}_{3}
$
so we get
\begin{equation}
\partial_{t}\mathcal{J}_{\eta_{n}}\mathbf{q}\left(\psi_{\eta_{n}\left(t\right)}\left(X\right)\right)=\partial_{t}\mathbf{Q}\left(t,X\right)-\nabla\mathbf{Q}\left(t,X\right)\left(\nabla\psi_{\eta_{n}\left(t\right)}\left(X\right)\right)^{-1}\partial_{t}\psi_{\eta_{n}},\ X\in\Omega.
\end{equation}
We denote $\overline{\mathbf{u}}_{n}:=\mathbf{u}_{n}\circ\psi_{\eta_{n}}$ and we obtain that 
\begin{equation}   \begin{aligned}\int_{s}^{t}\int_{\Omega^{\eta_{n}}\left(\tau\right)}\mathbf{u}_{n}\cdot\partial_{t}\mathcal{J}_{\eta_{n}}\mathbf{q} & =\\
\int_{s}^{t}\int_{\Omega}\overline{\mathbf{u}}_{n}\cdot\left(\partial_{t}\mathbf{Q}\left(t,X\right)-\nabla\mathbf{Q}\left(t,X\right)\left(\nabla\psi_{\eta_{n}\left(t\right)}\left(X\right)\right)^{-1}\partial_{t}\psi_{\eta_{n}}\right)\det\nabla\psi_{\eta_{n}\left(t\right)} & =:J_{1}-J_{2}
\end{aligned}
\end{equation}
Since $0<\det\nabla\psi_{\eta_{n}\left(t\right)}\lesssim1$ we write 
\begin{equation}
\begin{aligned}J_{1}\lesssim & \int_{s}^{t}\int_{\Omega}\overline{\mathbf{u}}_{n}\cdot\frac{\partial_{t}\nabla\psi_{\eta_{n}}\det\nabla\psi_{\eta_{n}}-\nabla\psi_{\eta_{n}}\partial_{t}\det\nabla\psi_{\eta_{n}}}{\det\nabla\psi_{\eta_{n}}}\mathbf{q}\\
\lesssim & \int_{s}^{t}\int_{\Omega}\overline{\mathbf{u}}_{n}\cdot\partial_{t}\nabla\psi_{\eta_{n}}\mathbf{q}-\frac{\partial_{t}\det\nabla\psi_{\eta_{n}}}{\det\nabla\psi_{\eta_{n}}}\overline{\mathbf{u}}_{n}\cdot\nabla\eta_{n}\mathbf{q}=:J_{1,1}-J_{1,2}
\end{aligned}
\end{equation}
and let us focus on the most tedious term, namely $J_{1,1}$. For this, we  make use of the estimate~\eqref{eqn:sup-n-energies}, we note that 
\[
\left|\partial_{t}\eta_{n}\left(R+\eta_{n}\right)\right|=\left|\partial_{t}\eta_{n}\mathbf{e}_{r}\cdot\boldsymbol{\nu}^{\eta_{n}}\right|=\left|\text{tr}_{\Gamma^{\eta_{n}}}\mathbf{u}_{n}\cdot\boldsymbol{\nu}^{\eta_{n}}\right|\le\left|\text{tr}_{\Gamma^{\eta_{n}}}\mathbf{u}_{n}\right|
\]
and hence $\partial_{t}\eta_{n}\in L_{t}^{2}L_{x}^{4-}$ 
and therefore by using linear interpolation we have
\[
\partial_{t}\eta_{n}\in L_{t}^{\infty}L_{x}^{2}\cap L_{t}^{2}L_{x}^{4-}\hookrightarrow L_{t}^{3}L_{x}^{3}
\]
Now, the important observation is that we integrate by parts to move the space derivatives from $\nabla\psi_{\eta_{n}}$ to $\overline{\mathbf{u}}_{n}$ and $\mathbf{q}$ and we obtain again the H\"{o}lder continuous estimates as before.  

The estimates concerning $J_{1,2}$ and $J_{2}$ can be obtained again in a (by now) straight-forward way via the energy estimates \eqref{eqn:sup-n-energies}.

Thus, we obtain that
\begin{equation}\label{eqn:unif-conv-HN}
    \sup_{\mathbf{q}\in H_{N}}\left|\int_{I}\int_{\Omega^{\eta_{n}}\left(t\right)}\mathbf{u}_{n}\cdot\mathcal{J}_{\eta_{n}}\mathbf{q}dxdt-\int_{I}\int_{\Omega^{\eta}\left(t\right)}\mathbf{u}\cdot\mathcal{J}_{\eta}\mathbf{q}dxdt\right|\xrightarrow{n\to\infty}0
\end{equation}

\item In order to cast $\mathbf{u}_{n}-\mathcal{F}_{\eta_{n}}\left(\partial_{t}\eta_{n}\right)$
 as  $\mathcal{J}_{\eta_{n}}\mathbf{q}$ for some $\mathbf{q}\in H_{N}$ (in order to get \eqref{eqn:comp-2})
we set 
\begin{equation}\label{eqn:choice-q}
\mathbf{q}:=\mathcal{M}_{\rho}\mathcal{J}_{\eta_{n}}^{-1}\left(\mathbf{u}_{n}-\mathcal{F}_{\eta_{n}}\left(\partial_{t}\eta_{n}\right)\right)
\end{equation} where $\mathcal{M}_{\rho}$ is a standard mollification operator\footnote{It can be defined as follows: given a  mollifier $m\in C_{0}^{\infty}\left(\mathbb{R}^{3};\mathbb{R}_{+}\right),\ \int m=1$ for any $\rho>0$ we define the mollification operator of a function $\mathbf{f}:\Omega\mapsto\mathbb{R}^{3}$ as follows:
$\mathcal{M}_{\rho}\mathbf{f}\left(x\right):=\rho^{-3}\int_{\mathbb{R}^{3}}m\left(\frac{x-y}{\rho}\right)\mathbf{f}\chi_{\Omega}\left(y\right)dy$.}

We notice that on $\Gamma^{\eta_n}$, at $r=R+\eta_{n}$, we have
$
\left(\mathbf{u}_{n}-\mathcal{F}_{\eta_{n}}\left(\partial_{t}\eta_{n}\right)\right)\cdot\boldsymbol{\nu}^{\eta_{n}}=0$. 
Using Lemma~\ref{lm:Piola} we get that at $r=R$ we have $\mathcal{J}_{\eta_{n}}^{-1}\left(\mathbf{u}_{n}-\mathcal{F}_{\eta_{n}}\left(\partial_{t}\eta_{n}\right)\right)\cdot\mathbf{e}_{r}=0$ so 
\[\mathcal{M}_{\rho}\mathcal{J}_{\eta_{n}}^{-1}\left(\mathbf{u}_{n}-\mathcal{F}_{\eta_{n}}\left(\partial_{t}\eta_{n}\right)\right)\cdot\mathbf{e}_{r}=0\] on $\Gamma$.
This is because around $\Gamma$, at $r\sim R$, the map $r \mapsto\mathcal{J}_{\eta_{n}}^{-1}\left(\mathbf{u}_{n}-\mathcal{F}_{\eta_{n}}\left(\partial_{t}\eta_{n}\right)\right)\cdot \mathbf{e}_{r}$ is a Sobolev function with zero trace and it can be approximated by a smooth function with compact support.  On the other hand for $z\in\left\{ 0,L\right\}$ (that is on $\Gamma_{in/out}$) we have that \[\mathcal{J}_{\eta_{n}}^{-1}\left(\mathbf{u}_{n}-\mathcal{F}_{\eta_{n}}\left(\partial_{t}\eta_{n}\right)\right)\times\mathbf{e}_{z}=0\]
and thus $\mathcal{M}_{\rho}\mathcal{J}_{\eta_{n}}^{-1}\left(\mathbf{u}_{n}-\mathcal{F}_{\eta_{n}}\left(\partial_{t}\eta_{n}\right)\right)\in H_{N}$ for any $N$, due to the smoothness of $\mathcal{M}_{\rho}$.

We now choose in \eqref{eqn:unif-conv-HN} a  $\mathbf{q}$ as in \eqref{eqn:choice-q}.
Let $\varepsilon>0$. There is $n(\varepsilon) \ge 1$ for which 

\begin{equation}
\left|\int_{I}\int_{\Omega^{\eta_{n}}\left(t\right)}\mathbf{u}_{n}\cdot\mathcal{J}_{\eta_{n}}\mathbf{q}-\int_{I}\int_{\Omega^{\eta}\left(t\right)}\mathbf{u}\cdot\mathcal{J}_{\eta}\mathbf{q}\right|<\varepsilon\left\Vert \mathbf{q}\right\Vert _{W_{x}^{1,N}},\ \forall \  n\ge n\left(\varepsilon\right)
\end{equation}

where $\mathbf{q}=\mathbf{q}_{\rho,n}:=\mathcal{M}_{\rho}\mathcal{J}_{\eta_{n}}^{-1}\left(\mathbf{u}_{n}-\mathcal{F}_{\eta_{n}}\left(\partial_{t}\eta_{n}\right)\right)\in H_{N}$. We see that \[\mathbf{q}_{\rho,n}\xrightarrow{\rho\to0}\mathbf{q}_{n}:=\mathcal{J}_{\eta_{n}}^{-1}\left(\mathbf{u}_{n}-\mathcal{F}_{\eta_{n}}\left(\partial_{t}\eta_{n}\right)\right) \quad \text{in} \ L^2(\Omega).\] 

From 
\[
\left|\int_{I}\int_{\Omega^{\eta_{n}}\left(t\right)}\mathbf{u}_{n}\cdot\mathcal{J}_{\eta_{n}}\mathbf{q}_{\rho,n}-\int_{I}\int_{\Omega^{\eta}\left(t\right)}\mathbf{u}\cdot\mathcal{J}_{\eta}\mathbf{q}_{\rho,n}\right|<\varepsilon\left\Vert \mathbf{q}_{\rho,n}\right\Vert _{W_{x}^{1,N}}
\]
since 
 $
 \left\Vert \mathbf{q}_{\rho,n}\right\Vert _{W_{x}^{1,N}}\lesssim\rho^{-K}\left\Vert \mathbf{q}_{n}\right\Vert _{L_{x}^{2}}\lesssim\rho^{-K}
 $
for a sufficiently large $K=K(N)$, we may choose $\rho=\rho_{\varepsilon}$ such that $\rho_{\varepsilon}^{K}=\sqrt{\varepsilon}$ to obtain that 
\[
\left|\int_{I}\int_{\Omega^{\eta_{n}}\left(t\right)}\mathbf{u}_{n}\cdot\mathcal{J}_{\eta_{n}}\mathbf{q}_{\rho_{\varepsilon},n}dxdt-\int_{I}\int_{\Omega^{\eta}\left(t\right)}\mathbf{u}\cdot\mathcal{J}_{\eta}\mathbf{q}_{\rho_{\varepsilon},n}dxdt\right|\lesssim \sqrt{\varepsilon}\ , \quad \forall \ n\ge n(\varepsilon) .
\]

For $\varepsilon$ small enough (and hence $\rho_{\varepsilon}$ small) we get that 
\[
\left|\int_{I}\int_{\Omega^{\eta_{n}}\left(t\right)}\mathbf{u}_{n}\cdot\mathcal{J}_{\eta_{n}}\mathbf{q}_{n}-\int_{I}\int_{\Omega^{\eta}\left(t\right)}\mathbf{u}\cdot\mathcal{J}_{\eta}\mathbf{q}_{n}\right|\lesssim\sqrt{\varepsilon},\quad \forall \  n\ge n\left(\varepsilon\right).
\]
Since $\mathcal{J}_{\eta_{n}}\mathbf{q}_{n}=\mathbf{u}_{n}-\mathcal{F}_{\eta_{n}}\left(\partial_{t}\eta_{n}\right)$
we are only left to prove that $\mathcal{J}_{\eta}\mathbf{q}_{n}\rightharpoonup\mathbf{u}-\mathcal{F}_{\eta}\left(\partial_{t}\eta\right)$ in an appropriate sense. But
$\mathbf{u}_{n}-\mathcal{F}_{\eta_{n}}\left(\partial_{t}\eta_{n}\right)\rightharpoonup\mathbf{u}-\mathcal{F}_{\eta}\left(\partial_{t}\eta\right)$ in $L^{2}_{t}L^{2}_{x}$ and $\nabla\psi_{\eta_{n}}\to\nabla\psi_{\eta}$ in $L_{t}^{\infty}L_{x}^{r}$ for any $1 \le r <\infty$ and $\det\nabla\psi_{\eta_{n}}\to\det\nabla\psi_{\eta}$ uniformly in $(t,x)$ we get that  $\mathcal{J}_{\eta_{n}}^{-1}\left(\mathbf{u}_{n}-\mathcal{F}_{\eta_{n}}\left(\partial_{t}\eta_{n}\right)\right)\rightharpoonup\mathcal{J}_{\eta}^{-1}\left(\mathbf{u}-\mathcal{F}_{\eta}\left(\partial_{t}\eta\right)\right)$ in $L^{2}_{t} L^{2}_{x}$. This means that $\mathcal{J}_{\eta}\mathbf{q}_{n}\rightharpoonup\mathbf{u}-\mathcal{F}_{\eta}\left(\partial_{t}\eta\right)$ in the same space. This allows as to establish \eqref{eqn:comp-2}.
And by adding \eqref{eqn:comp-u-eta-1} and \eqref{eqn:comp-2} we obtain the norm convergence in the Hilbert space $L^{2}\left(I;L^{2}\left(\Omega^{\eta}\right)\times L^{2}\left(\omega\right)\right)$. Which improves the weak convergence from \eqref{eqn:weak-conv-comp} to the strong convergence announced in Proposition~\ref{prop:compactness}.
\end{enumerate}
 \end{proof}

\subsection{The decoupled, linearized and regularized problem}\label{ssec:decoupled}
In order to construct a solution for our FSI problem, since $\eta$ plays the role of an unknown in the elastic equation and also prescribes the moving domains $\Omega^{\eta}$, it seems necessary to \emph{decouple} the problem. 
Suppose we are given a smooth function $\delta\in C^{\infty}\left(I\times\omega\right)$ which prescribes a domain $\Omega^{\delta}\left(t\right)$ (with moving boundary $\Gamma^{\delta}(t)$) we can solve the FSI problem in this new domain, thus obtaining a solution dependent of $\delta$, denoted $\left(\mathbf{u}_{\delta},\eta_{\delta}\right)\in\mathcal{S}^{\delta}$. We observe that if we find a $\delta$ for which $\delta=\eta_{\delta}$, then this would recover a solution of the original problem. This will be achieved throughout the set-valued fixed-point Theorem~\ref{thm:kakutani}. Since Theorem~\ref{thm:kakutani} requires convexity of some sets, we 
will \emph{linearize} the problem, and more precisely the convective term (the only nonlinearity of this problem) around a smooth vector field $\mathbf{v}\in C^{\infty}\left(I\times\mathbb{R}^{3};\mathbb{R}^{3}\right)$. This is why we will replace $b\left(t,\mathbf{u},\mathbf{u},\mathbf{q}\right)$ by $b\left(t,\mathbf{u},\mathbf{v},\mathbf{q}\right)$.  By decoupling we write 
\[
b\left(t,\mathbf{u},\mathbf{v},\mathbf{q}\right):=\int_{\Omega^{\delta}\left(t\right)}\frac{1}{2}\left(\mathbf{u}\cdot\nabla\right)\mathbf{v}\cdot\mathbf{q}-\frac{1}{2}\left(\mathbf{u}\cdot\nabla\right)\mathbf{q}\cdot\mathbf{v}dx.
\]
In general the domain of definition of the initial value $\mathbf{u}_{0}$ is $\Omega^{\eta}\left(0\right)$ and we fix this by requiring that $\delta\left(0,\cdot\right)=\eta_{0}$ and by assuming that $\eta_{0}$ is smooth; for this one would have to regularize $\eta_{0}$ by writing $\eta_{0}=\mathcal{R}_{\varepsilon}\eta_{0}$. This can be achieved without issues in Subsection~\ref{ssec:set-v-fixed-point}. See also \cite[p. 234]{LR14}.

We introduce now an adapted notion of the solution.
\begin{definition} Given a pair
$
\left(\delta,\mathbf{v}\right)\in C^{\infty}\left(I\times\omega\right)\times L^{2}\left(I\times\mathbb{R}^{3}\right)$
  we call  a couple $\left(\mathbf{u},\eta\right)\in\mathcal{S}^{\delta}$  a  solution for the \emph{decoupled and linearized} problem  provided that the following variational formulation holds:
  \begin{enumerate}
      \item For almost all $t\in I$ and for all $\left(\mathbf{q},\xi\right)\in\mathcal{T}^{\delta}$ the following holds: 
\begin{equation}
\begin{aligned}\int_{\Omega^{\delta}\left(t\right)}\mathbf{u}\cdot\mathbf{q}\left(t\right)dx+\int_{0}^{t}\int_{\Omega^{\delta}\left(s\right)}-\mathbf{u}\cdot\partial_{t}\mathbf{q}+\mathbb{D}\mathbf{u}:\mathbb{D}\mathbf{q}dxds & +\\
\int_{0}^{t}b\left(s,\mathbf{u},\mathbf{v},\mathbf{q}\right)ds+\int_{0}^{t}\int_{\Gamma^{\delta}\left(s\right)}-\frac{1}{2}\left(\mathbf{u}\cdot\mathbf{q}\right)\left(\partial_{t}\delta\mathbf{e}_{r}\circ\phi_{\delta\left(s\right)}^{-1}\right)\cdot\boldsymbol{\nu}^{\delta\left(s\right)}dA_{\delta}ds & +\\
\frac{1}{\alpha}\int_{0}^{t}\int_{\omega}\left(\mathbf{u}\circ\phi_{\delta\left(s\right)}-\partial_{t}\eta\mathbf{e}_{r}\right)\cdot\left(\mathbf{q}\circ\phi_{\delta\left(s\right)}-\xi\mathbf{e}_{r}\right)J_{\delta\left(s\right)}dAds & +\\
\int_{\omega}\partial_{t}\eta\cdot\xi\left(t\right)dA+\int_{0}^{t}\int_{\omega}-\partial_{t}\eta\cdot\partial_{t}\xi+\nabla^{2}\eta:\nabla^{2}\xi dAds & =\\
\int_{0}^{t}\left\langle F\left(t\right),\mathbf{q}\right\rangle ds+\int_{\Omega^{\delta}\left(0\right)}\mathbf{u}_{0}\cdot\mathbf{q}\left(0\right)dx+\int_{\omega}\eta_{1}\xi dA.
\end{aligned}
\end{equation}
\item The following energy balance holds 
\begin{equation}\label{eqn:delta-energy-ineq}
\sup_{t\in\left(0,T\right)}E_{\delta}\left(t\right)+\int_{0}^{T}E_{slip,\delta}\left(t\right)dt+\int_{0}^{T}\int_{\Omega^{\delta}\left(t\right)}\left|\nabla\mathbf{u}\right|^{r}dxdt\lesssim E(0)+\left\Vert P\right\Vert _{L_{t}^{2}}^{2}<\infty
\end{equation}
for all $r<2$.
  \end{enumerate}

\end{definition}
\begin{remark} The energy balance \eqref{eqn:delta-energy-ineq} is the analogue of \eqref{eqn:energy}. The corresponding $E_{\delta}$ energy is defined by \eqref{eqn:notation-energy-delta}. In order to obtain \eqref{eqn:delta-energy-ineq} by testing with $\left(\mathbf{u},\partial_{t}\eta\right)$, we have modified the term involving $\int_{\Gamma^{\delta}\left(t\right)}$ accordingly.
\end{remark}

\subsubsection{The construction of a decoupled and linearized solution}\label{ssec:construction-weak}
We will now construct the Galerkin approximations $\left(\mathbf{u}_{n},\eta_{n}\right)_{n\ge1}$ of a decoupled and linearized solution $\left(\mathbf{u},\eta\right)\in\mathcal{S}^{\delta}$.
For this, we recall the extension operator $\mathcal{F}^{s}_{\delta}$ from Proposition~\ref{prop:extension-slip}.
The Galerkin basis will consist of two parts: 
\begin{itemize}
    \item Let $\left(Y_{k}\right)_{k\ge1}$ be a smooth basis of the space $H_{0}^{2}(\omega)$. Set $\mathbf{Y}_{k}\left(t,x\right):=\mathcal{F}^{s}_{\delta}\left(Y_{k}\right)$ for $k\ge 1$. Note that $\mathbf{Y}_{k}\left(t,\cdot\right):\Omega^{\delta}\left(t\right)\mapsto\mathbb{R}^{3}$ with $\text{div} \mathbf{Y}_{k}\left(t,\cdot\right)=0$.
   By construction we also have that $\text{tr}_{\Gamma^{\delta}}\left(\mathbf{Y}_{k}\right)\cdot\boldsymbol{\nu}^{\delta}=Y_{k}\mathbf{e}_{r}\cdot\boldsymbol{\nu}^{\delta}$. 
\item Let $\left(\mathbf{Z}_{k}\right)_{k\ge1}$ be a basis of the space $\mathcal{H}:=\left\{ \mathbf{v}\in H_{\text{div}}^{1}\left(\Omega\right):\text{tr}_{\Gamma_{in/out}}\left(\mathbf{v}\right)\cdot\boldsymbol{\tau}_{1,2}=0,\ \text{tr}_{\Gamma}\left(\mathbf{v}\right)\cdot\mathbf{e}_{r}=0\right\}$. 
Such a basis can be constructed by using the Stokes operator in a similar manner as in \cite[p. 15]{mindrila-roy-multuilayered}.
By applying now the Piola mapping $\mathcal{J}_{\delta}$ to $\mathcal{H}$ we will obtain an $H^1$ basis of the Hilbert space
\[
\mathcal{H}^{\delta\left(t\right)}:=\left\{ \mathbf{v}\in H_{\text{div}}^{1}\left(\Omega^{\delta}\left(t\right)\right):\text{tr}_{\Gamma_{in/out}}\left(\mathbf{v}\right)\cdot\boldsymbol{\tau}_{1,2}=0,\ \text{tr}_{\Gamma^{\delta\left(t\right)}}\left(\mathbf{v}\right)\cdot\boldsymbol{\nu}^{\delta}=0\right\}. 
\]
For this we use again Lemma~\ref{lm:Piola} to ensure that the zero normal component is preserved.

\item We join the families $\left\{ \mathbf{Y}_{k}\right\} _{k\ge1}$ and $\left\{ \mathbf{Z}_{k}\right\} _{k\ge1}$ by setting 
\begin{equation}
\mathbf{X}_{k}\left(t,x\right):=\begin{cases}
\mathbf{Y}_{k}\left(t,x\right) & k\ \text{odd}\\
\mathbf{Z}_{k}\left(t,x\right) & k\ \text{even}
\end{cases},\quad X_{k}\left(x\right)=\begin{cases}
Y_{k} & k\ \text{odd}\\
0 & k\ \text{even}
\end{cases}
\end{equation}
 and by construction we have that
\begin{equation}\label{eqn:couple-basis}
\text{tr}_{\Gamma^{\delta}}\mathbf{X}_{k}\cdot\boldsymbol{\nu}^{\delta}=X_{k}\mathbf{e}_{r}\cdot\boldsymbol{\nu}^{\delta}
\end{equation}
\end{itemize}

We may now consider the Galerkin approximations: for each $n\ge 1$ and each $k\in\left\{ 1,2,\ldots,n\right\}$ we aim to find $n$ functions  denoted by $a_{n}^{k}\in C^{1}\left(I;\mathbb{R}\right)$ such that the approximations $\left(\mathbf{u}_{n},\eta_{n}\right)$ of  defined via
\begin{equation}
\begin{aligned}\mathbf{u}_{n}\left(t,x\right):= & \sum_{k=1}^{n}\left(a_{n}^{k}\right)^{\prime}\left(t\right)\mathbf{X}_{k}\left(t,x\right), & \left(t,x\right)\in I\times\Omega^{\delta}\\
\eta_{n}\left(t,x\right):= & \sum_{k=1}^{n}a_{n}^{k}\left(t\right)X_{k}\left(x\right), & \left(t,x\right)\in I\times\omega
\end{aligned}
\end{equation}
From \eqref{eqn:couple-basis} we have ensured  that \begin{equation}
\text{tr}_{\Gamma^{\delta}}\mathbf{u}_{n}\cdot\boldsymbol{\nu}^{\delta}=\partial_{t}\eta_{n}\mathbf{e}_{r}\cdot\boldsymbol{\nu}^{\delta}
\end{equation}
solve the system of $n$ differential equations given, for any $k=\overline{1,n}$ by 
\begin{equation}\label{eqn:system-ode}
\begin{aligned}\int_{\Omega^{\delta}\left(t\right)}\partial_{t}\mathbf{u}_{n}\cdot\mathbf{X}_{k}+\mathbb{D}\mathbf{u}_{n}:\mathbf{\mathbb{D}X}_{k}dx+b\left(t,\mathbf{u}_{n},\mathbf{v},\mathbf{X}_{k}\right) & +\\
\int_{\Gamma^{\delta}\left(t\right)}\frac{1}{2}\left(\mathbf{u}\cdot\mathbf{X}_{k}\right)\left(\partial_{t}\delta\mathbf{e}_{r}\circ\phi_{\delta\left(t\right)}^{-1}\right)\cdot\boldsymbol{\nu}^{\delta\left(t\right)}dA_{\delta} & +\\
\frac{1}{\alpha}\int_{\omega}\left(\mathbf{u}_{n}\circ\phi_{\delta}-\partial_{t}\eta_{n}\mathbf{e}_{r}\right)\cdot\left(\mathbf{X}_{k}\circ\phi_{\delta}-X_{k}\mathbf{e}_{r}\right)J_{\delta}dA & +\\
\int_{\omega}\partial_{tt}\eta_{n}X_{k}+\nabla^{2}\eta_{n}:\nabla^{2}X_{k}dA & =\\
\left\langle F\left(t\right),\mathbf{X}_{k}\right\rangle. 
\end{aligned}
\end{equation}
Denoting $\mathbf{a}_{n}\left(t\right):=\left(a_{n}^{k}\left(t\right)\right)_{k=\overline{1,n}}$  we see that  \eqref{eqn:system-ode} is a system of differential equations of second order in the unknown $\mathbf{a}_{n}$
with the initial conditions $\mathbf{a}_{n}^{\prime}\left(0\right),\mathbf{a}_{n}\left(0\right)$ chosen such that $(\mathbf{u}_{n}\left(0\right),\partial_{t}\eta_{n}\left(0\right))\to (\mathbf{u}_{0},\eta_{1})$ in $L^{2}_{x}$ and $\eta_{n}\left(0,\cdot\right)\to\eta_{0}$ in $H^{2}_{x}$.

The system  \eqref{eqn:system-ode} can be solved locally in a time interval $[0,T_{0})$ by a Picard-Lindel\"{o}f argument,  in a similar manner as presented in \cite[p. 239]{LR14} and \cite[Section 5]{mindrila-roy-multuilayered}.  The mass matrix $M(t)$ of the system is given via \[M_{ik}\left(t\right):=\int_{\Omega^{\delta}\left(t\right)}\mathbf{X}_{i}\cdot\mathbf{X}_{k}dx+\int_{\omega}X_{i}\cdot X_{k}dA\] and we see that for an arbitrary $\mathbf{a}\in \mathbb{R}^{n}$ we have that
\[
M\mathbf{a}\cdot\mathbf{a}=\int_{\Omega^{\delta}\left(t\right)}\left|\sum_{i=1}^{n}a_{i}\mathbf{X}_{i}\right|^{2}dx+\int_{\omega}\left|\sum_{i=1}^{n}a_{i}X_{i}\right|^{2}dA\ge0
\]
with equality when $\mathbf{a}=\mathbf{0}$ due to the construction of the basis.  So $M(t)$ is positive definite.

Now, we can prove that the system is solvable (at least) until $T_0=T$ for any $0<T\le \infty$. This follows from the energy balance, which ensures no blow-up occurs in $I=[0,T]$.
Indeed, by taking the scalar product between \eqref{eqn:system-ode} and $\left(a_{n}^{k}\right)^{\prime}$ for $k=\overline{1,n}$ we obtain that 
\begin{equation}\label{eqn:discrete-energy-balance}
   \frac{d}{dt}E_{n}\left(t\right)+\int_{\Omega^{\delta}\left(t\right)}\left|\mathbb{D}\mathbf{u}_{n}\right|^{2}dx+\frac{1}{\alpha}\int_{\omega}\left|\mathbf{u}_{n}\circ\phi_{\delta}-\partial_{t}\eta_{n}\mathbf{e}_{r}\right|^{2}J_{\delta}dA=\left\langle F\left(t\right),\mathbf{u}_{n}\right\rangle  
\end{equation}
where $E_n$ is the analogue of $E$ from \eqref{eqn:energy}.
And from \eqref{eqn:discrete-energy-balance} we conclude that
\begin{equation}\label{eqn:sup-n-En}
\sup_{n\ge1}\sup_{t\in\left(0,T\right)}E_{n}\left(t\right)<\infty.
\end{equation}

We multiply each of the $k$ equation from\eqref{eqn:system-ode} by a test function $\varphi\in C^{1}[0,T]$ and then integrate in time on $I=[0,T]$. By integration by parts we obtain that for every $1\le k \le n$ the following holds 
\begin{equation}
\begin{aligned}\int_{\Omega^{\delta}\left(t\right)}\mathbf{u}_{n}\cdot\left(\varphi\mathbf{X}_{k}\right)\left(t\right)dx+\int_{0}^{t}\int_{\Omega^{\delta}\left(s\right)}-\mathbf{u}_{n}\cdot\partial_{t}\left(\varphi\mathbf{X}_{k}\right)+\mathbb{D}\mathbf{u}_{n}:\mathbb{D}\left(\varphi\mathbf{X}_{k}\right)dxds & +\\
\int_{0}^{t}b\left(s,\mathbf{u}_{n},\mathbf{v},\varphi\mathbf{X}_{k}\right)ds+\frac{1}{2}\int_{0}^{t}\int_{\Gamma^{\delta}\left(s\right)}\left(\mathbf{u}_{n}\cdot\varphi\mathbf{X}_{k}\right)\left(\partial_{t}\delta\mathbf{e}_{r}\circ\phi_{\delta}^{-1}\right)\cdot\boldsymbol{\nu}^{\delta}dA_{\delta}ds & +\\
\frac{1}{\alpha}\int_{0}^{t}\int_{\omega}\left(\mathbf{u}_{n}\circ\phi_{\delta}-\partial_{t}\eta_{n}\mathbf{e}_{r}\right)\cdot\left(\varphi\mathbf{X}_{k}\circ\phi_{\delta}-\varphi X_{k}\mathbf{e}_{r}\right)J_{\delta}dAds & +\\
\int_{\omega}\partial_{t}\eta_{n}\cdot\left(\varphi X_{k}\right)\left(t\right)dA+\int_{0}^{t}\int_{\omega}-\partial_{t}\eta_{n}\cdot\partial_{t}\left(\varphi X_{k}\right)+\nabla^{2}\eta_{n}:\nabla^{2}\left(\varphi X_{k}\right)dAds & =\\
\int_{0}^{t}\left\langle F\left(s\right),\varphi\mathbf{X}_{k}\right\rangle ds+\int_{\Omega^{\delta}\left(0\right)}\mathbf{u}_{n}\left(0\right)\cdot\left(\varphi\mathbf{X}_{k}\left(0\right)\right)dx+\int_{\omega}\eta_{n}\left(0\right)\left(\varphi\left(0\right)X_{k}\right)dA.
\end{aligned}
\end{equation}

Then, for fixed $k$, we  pass to the limit as $n\to \infty$. 
Indeed, from \eqref{eqn:sup-n-En}  there exists a pair $\left(u,\eta \right)\in \mathcal{S}^{\delta}$ such that 
\begin{equation}
\left(\mathbf{u}_{n},\eta_{n}\right)\rightharpoonup\left(\mathbf{u},\eta\right)\quad\text{in}\ \mathcal{S^{\delta}}.
\end{equation}
This means we have, for a subsequence that we do not relabel, that
\begin{equation}
    \begin{aligned}\mathbf{u}_{n}\rightharpoonup\mathbf{u} & \quad\text{in}\ L^{\infty}\left(I;L^{2}\left(\Omega^{\delta}\left(t\right)\right)\right)\\
\mathbb{D}\mathbf{u}_{n}\rightharpoonup\overline{\mathbb{D}\mathbf{u}} & \quad\text{in}\ L^{2}\left(I;L^{2}\left(\Omega^{\delta}\left(t\right)\right)\right)\\
\eta_{n}\rightharpoonup\eta & \quad\text{in}\ L^{\infty}\left(I;H_{0}^{2}\left(\omega\right)\right)\\
\partial_{t}\eta_{n}\rightharpoonup\partial_{t}\eta & \quad\text{in}\ L^{\infty}\left(I;L^{2}\left(\omega\right)\right)
\end{aligned}
\end{equation}

 By $\overline{\mathbb{D}\mathbf{u}}$ we understand a weak limit for which we would like to have $\overline{\mathbb{D}\mathbf{u}}=\mathbb{D}\mathbf{u}$. However, with similar arguments as in \eqref{eqn:energy} we also obtain that 
 \begin{equation}
     \nabla\mathbf{u}_{n}\rightharpoonup\nabla\mathbf{u}\quad\text{in}\ L^{2}\left(I;L^{2-}\left(\Omega^{\delta}\left(t\right)\right)\right)
 \end{equation}
and hence $\overline{\mathbb{D}\mathbf{u}}=\mathbb{D}\mathbf{u}$ and also $\mathbf{u}_{n}\circ\phi_{\delta}\to\mathbf{u}\circ\phi_{\delta}$ in $L^{2}\left(I;L^{2}\left(\Gamma^{\delta}\left(t\right)\right)\right)$.

By weak lower semicontinuity we find that the analogue quantities of \eqref{eqn:energy}, namely $E_{\delta}$, $E_{slip,\delta}$, $D_{\delta}$
given by 
\begin{equation}\label{eqn:notation-energy-delta}
\begin{aligned}E_{\delta}\left(t\right):= & \frac{1}{2}\int_{\Omega^{\delta}\left(t\right)}\left|\mathbf{u}\left(t,x\right)\right|^{2}dx+\frac{1}{2}\int_{\omega}\left|\partial_{t}\eta\right|^{2}dA+\frac{1}{2}\int_{\omega}\left|\nabla^{2}\eta\right|^{2}dA\\
D_{\delta}\left(t\right):= & \int_{\Omega^{\delta}\left(t\right)}\left|\mathbb{D}\mathbf{u}\right|^{2}+\left|\nabla\mathbf{u}\right|^{2-}dx\\
E_{slip,\delta}\left(t\right):= & \frac{1}{\alpha}\int_{\omega}\left|\mathbf{u}\circ\phi_{\delta\left(t\right)}-\partial_{t}\eta\mathbf{e}_{r}\right|^{2}J_{\delta}\ dA.
\end{aligned}
\end{equation}
fulfill the relation
\begin{equation}\label{eqn:decoupled-energy-ineq}
    \sup_{t\in (0,T)}E_{\delta}\left(t\right)+\int_{0}^{T}E_{slip,\delta}\left(s\right)+D_{\delta}\left(s\right)ds\lesssim E\left(0\right)+\left\Vert P\right\Vert _{L_{t}^{2}}^{2}.
\end{equation}
which is similar to \ref{eqn:energy}.
With similar arguments as in \cite[p. 237]{LR14} we obtain that 
\[\text{span}\left\{ \left(\varphi\mathbf{X}_{k},\varphi X_{k}\right):\varphi\in C^{1}[0,T]),\ k\in\mathbb{N}_{\ge 1}\right\} \ \text{is dense in} \ \mathcal{T}^{\delta}.\] 

All these facts enable us to conclude that we proved the following:
\begin{proposition}\label{prop:dec-lin-problem}
For any given pair $
\left(\delta,\mathbf{v}\right)\in C^{\infty}\left(I\times\omega\right)\times C^{\infty}\left(I\times\mathbb{R}^{3}\right)$, there exists at least a corresponding  solution for the decoupled and linearized problem denoted by $\left(\mathbf{u},\eta\right)\in\mathcal{S}^{\delta}$.    Moreover, it fulfills the corresponding energy inequality \eqref{eqn:energy-ineq}.
\end{proposition}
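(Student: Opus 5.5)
The plan is to follow the Galerkin scheme of Subsection~\ref{ssec:construction-weak} and turn each of its steps into a rigorous argument. Since $(\delta,\mathbf{v})$ are smooth, the problem on $\Omega^{\delta}$ is \emph{linear} in $(\mathbf{u},\eta)$. All geometric quantities are then smooth in $(t,x)$: $\psi_{\delta}$, $J_{\delta}$, $\boldsymbol{\nu}^{\delta}$, the Piola map $\mathcal{J}_{\delta}$ and the slip extension $\mathcal{F}^{s}_{\delta}$ from Proposition~\ref{prop:extension-slip}. Consequently the basis functions $\mathbf{X}_{k}$ are smooth in time, and $\partial_{t}\mathbf{X}_{k}$ is bounded.

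\textbf{Step 1: solve the Galerkin system.} I would first check that the mass matrix $M(t)$ is uniformly positive definite on $\overline{I}$. Linear independence of $\{(\mathbf{X}_{k},X_{k})\}_{k\le n}$ holds for each $t$: if $\sum a_{k}X_{k}=0$, then the $Y$-coefficients vanish, and $\mathcal{J}_{\delta}$ is injective on the $\mathbf{Z}$-part. Together with continuity in $t$ and compactness of $\overline{I}$ this gives $M(t)\ge c_{n}>0$. Rewritten with $\partial_{t}\mathbf{u}_{n}=\sum (a^{k}_{n})''\mathbf{X}_{k}+(a^{k}_{n})'\partial_{t}\mathbf{X}_{k}$, the system \eqref{eqn:system-ode} is a linear second-order ODE $M\mathbf{a}''=A(t)\mathbf{a}'+B\mathbf{a}+\mathbf{f}(t)$ with continuous coefficients. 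Linear ODE theory then gives a global $C^{2}$ solution on all of $I$ directly, without needing Picard--Lindel\"of plus continuation. The forcing term $\langle F,\mathbf{X}_{k}\rangle$ is continuous provided $P$ is regularized; alternatively one uses Carath\'eodory solutions, since $P\in L^{2}$.

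\textbf{Step 2: uniform energy bounds.} I would multiply the $k$-th equation by $(a^{k}_{n})'$ and sum over $k$. Reynolds' transport theorem produces the boundary term $\frac12\int_{\Gamma^{\delta}}|\mathbf{u}_{n}|^{2}\partial_t\delta\,\mathbf{e}_r\cdot\boldsymbol{\nu}^{\delta}$. This term cancels exactly with the modified boundary term in the decoupled formulation. Moreover $b(t,\mathbf{u}_{n},\mathbf{v},\mathbf{u}_{n})$ is bounded by $\|\nabla\mathbf{v}\|_{\infty}\|\mathbf{u}_{n}\|_{2}^{2}+\|\mathbf{v}\|_{\infty}\|\mathbf{u}_{n}\|_{2}\|\nabla\mathbf{u}_{n}\|_{2}$. (The skew form does not vanish because the first argument differs from the second.) To control $\|\nabla\mathbf{u}_{n}\|_{L^{2}}$ by $\|\mathbb{D}\mathbf{u}_{n}\|_{L^{2}}+\|\mathbf{u}_{n}\|_{L^{2}}$, I would use that $\Omega^{\delta}(t)$ is a smooth domain (Lipschitz Korn inequality). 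Alternatively one can work with $L^{2-}$ via Proposition~\ref{prop:korn} and Young's inequality. The inflow term is handled as in Remark~\ref{rmk:steady}. Gr\"onwall's lemma then gives \eqref{eqn:sup-n-En} together with $\int_{0}^{T}(D_{n}+E_{slip,n})\,dt\le C$, uniformly in $n$.

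\textbf{Step 3: pass to the limit.} Weak and weak-$*$ compactness yields the limits listed above. All terms in the tested identity with fixed $\varphi\mathbf{X}_{k}$ are linear in $(\mathbf{u}_{n},\eta_{n})$, so weak convergence suffices. The only subtle terms are the traces in the slip and boundary integrals. For these I would use the continuity of $\mathrm{tr}_{\delta}$ from $W^{1,2-}$ into $L^{2}(\omega)$ (Lemma~\ref{lm:trace}), which is weak--weak continuous. The identification $\overline{\mathbb{D}\mathbf{u}}=\mathbb{D}\mathbf{u}$ follows from the distributional limit. The constraint $\mathrm{tr}_{\delta}\mathbf{u}\cdot\boldsymbol{\nu}^{\delta}=\partial_{t}\eta\,\mathbf{e}_{r}\cdot\boldsymbol{\nu}^{\delta}$ passes to the weak limit because it is linear. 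Hence $(\mathbf{u},\eta)\in\mathcal{S}^{\delta}$.

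\textbf{Step 4: density of test functions.} This is the step I expect to be the main obstacle: the span of $\{(\varphi\mathbf{X}_{k},\varphi X_{k})\}$ must be dense in $\mathcal{T}^{\delta}$. Given $(\mathbf{q},\xi)\in\mathcal{T}^{\delta}$, I would decompose $\mathbf{q}=\mathcal{F}^{s}_{\delta}(\xi)+(\mathbf{q}-\mathcal{F}^{s}_{\delta}(\xi))$. The second summand has zero normal trace, so $\mathcal{J}_{\delta}^{-1}$ of it lies in $\mathcal{H}$-valued functions of time (Lemma~\ref{lm:Piola}). The first summand is approximated by expanding $\xi$ in $Y_{k}$ with time-dependent coefficients, which are then approximated by $C^{1}$ functions. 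The continuity estimates \eqref{eqn:extension-estimates}, together with the smoothness of $\delta$ (which controls $\partial_{t}\mathcal{F}^{s}_{\delta}$ and $\partial_{t}\mathcal{J}_{\delta}$), give convergence in the $\mathcal{T}^{\delta}$ norm, and we conclude by linearity.

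\textbf{Step 5: energy inequality.} Finally, the energy inequality \eqref{eqn:decoupled-energy-ineq} follows from the uniform bounds and weak lower semicontinuity of norms. The initial data are attained because the approximating initial values converge to $(\mathbf{u}_{0},\eta_{0},\eta_{1})$.
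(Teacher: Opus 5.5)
Your proposal runs the same Galerkin scheme as the paper: the basis $\mathbf{X}_k$ is assembled from $\mathcal{F}^{s}_{\delta}(Y_k)$ and $\mathcal{J}_{\delta}\mathbf{Z}_k$, you pass to weak limits with fixed $\varphi\mathbf{X}_k$, you use density of these test functions in $\mathcal{T}^{\delta}$, and you conclude by lower semicontinuity. Two of your deviations are harmless simplifications. First, treating \eqref{eqn:system-ode} as a linear ODE with a mass matrix uniformly positive definite on $\overline{I}$ gives global solvability at once, where the paper uses Picard--Lindel\"of plus continuation by the energy. Second, your splitting $\mathbf{q}=\mathcal{F}^{s}_{\delta}(\xi)+(\mathbf{q}-\mathcal{F}^{s}_{\delta}(\xi))$, with the Piola pullback of the second part into $\mathcal{H}$, spells out the density step that the paper only cites from \cite{LR14}.

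The substantive difference is your Step 2. The paper writes the exact balance \eqref{eqn:discrete-energy-balance} with no convective contribution. With $b$ as the paper defines it, however, $b(t,\mathbf{u},\mathbf{v},\mathbf{w})$ is antisymmetric only in its last two slots. So $b(t,\mathbf{u}_n,\mathbf{v},\mathbf{u}_n)\neq 0$ in general; the relevant mismatch is between the second and third arguments, not the first and second as you wrote. You are right to keep this term and absorb it with Korn on the Lipschitz domains $\Omega^{\delta}(t)$ plus Gr\"onwall, and this makes your proof of the proposition, as literally formulated, correct.

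The price is that your constant depends on $\|\mathbf{v}\|_{W^{1,\infty}}$, on $\delta$, and exponentially on $T$. Since $b(t,\mathbf{u},\mathbf{v},\mathbf{u})$ equals $\int \mathbf{u}\otimes\mathbf{u}:\nabla\mathbf{v}$ up to a boundary term, you should not expect the data-only constant of \eqref{eqn:energy-ineq} for this $b$ by any argument. That data-only constant is exactly what the paper uses afterwards in two places:
\begin{itemize}
\item to get $F(D)\subset D$ for $\|\mathbf{v}\|_{L^2_tL^2_x}\le C_1$ in Subsection~\ref{ssec:set-v-fixed-point};
\item for the $\varepsilon$-uniform bound \eqref{eqn:eps-en-estimate} at the fixed point $\mathbf{v}=\mathcal{R}_{\varepsilon}\mathbf{u}_{\varepsilon}$, where $\|\nabla\mathcal{R}_{\varepsilon}\mathbf{u}_{\varepsilon}\|_{\infty}$ is not bounded as $\varepsilon\to 0$.
\end{itemize}

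The remedy is to linearize with the frozen field in the transporting slot, i.e.\ to use $b(t,\mathbf{v},\mathbf{u},\mathbf{q})$ as in \cite{LR14}. The problem stays linear in $\mathbf{u}$, and $b(t,\mathbf{v},\mathbf{u}_n,\mathbf{u}_n)=0$ identically. Your Step 2 then reduces to \eqref{eqn:discrete-energy-balance} without Gr\"onwall, with a bound independent of $\mathbf{v}$, and your Steps 1 and 3--5 go through unchanged.
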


Now, in order to relax the assumptions on $(\delta,\mathbf{v})$, let us considering the regularizing operator introduced in Subsection~\ref{ssec:regularization-operator} and  
denoted by $\mathcal{R}_{\varepsilon}$ for any $\varepsilon>0$ which maps 
\[
\mathcal{R}_{\varepsilon}:C^{0}\left(I\times\overline{\omega}\right)\times L^{2}\left(I\times\mathbb{R}^{3}\right)\mapsto C^{\infty}\left(I\times\omega\right)\times C^{\infty}\left(I\times\mathbb{R}^{3}\right).
\]
Then, from Proposition~\ref{prop:dec-lin-problem} we obtain the following
\begin{proposition}\label{prop:exist-dec-eps-reg}
Let $\left(\delta,\mathbf{v}\right)\in C^{0}\left(I\times\overline{\omega}\right)\times L^{2}\left(I\times\mathbb{R}^{3}\right)$.
For any $\varepsilon>0$ there exists at least one pair $\left(\mathbf{u}_{\varepsilon},\eta_{\varepsilon}\right)\in\mathcal{S}^{\mathcal{R}_{\varepsilon}\delta}$ which is a solution for the decoupled and linearized problem around $\left(\mathcal{R}_{\varepsilon}\delta,\mathcal{R}_{\varepsilon}\mathbf{v}\right)$. Furthermore, it fulfills the corresponding energy estimate
\begin{equation}\label{eqn:eps-energy-ineq}
\sup_{t\in(0,T)}E_{\mathcal{R}_{\varepsilon}\delta}\left(t\right)+\int_{0}^{T}E_{slip,\mathcal{R}_{\varepsilon}\delta}\left(s\right)+D_{\mathcal{R}_{\varepsilon}\delta}\left(s\right)ds\lesssim E\left(0\right)+\left\Vert P\right\Vert _{L_{t}^{2}}^{2}.
\end{equation}
The quantities in \eqref{eqn:eps-energy-ineq} are obtained from \eqref{eqn:notation-energy-delta}.
\end{proposition}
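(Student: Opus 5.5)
The plan is to apply Proposition~\ref{prop:dec-lin-problem} to the regularized data and then transfer the energy estimate, checking that every constant is independent of $\varepsilon$. Fix $\varepsilon>0$ and set $\delta_\varepsilon:=\mathcal{R}_{\varepsilon}\delta$ and $\mathbf{v}_\varepsilon:=\mathcal{R}_{\varepsilon}\mathbf{v}$. By the properties of the regularizing operator in Subsection~\ref{ssec:regularization-operator}, $\delta_\varepsilon$ belongs to $C^{4}(I;C^{4}(\omega))$, and a standard mollification in time and space makes it $C^\infty$ if needed. Likewise, $\mathbf{v}_\varepsilon\in C^\infty(I\times\mathbb{R}^3)$ because it is a space–time convolution. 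Property (d) gives $\|\delta_\varepsilon\|_{L^\infty_{t,x}}\le \|\delta\|_{L^\infty_{t,x}}+\varepsilon\le M<R$, provided $\|\delta\|_{L^\infty}<M$ and $\varepsilon$ is small. This ensures that $\Omega^{\delta_\varepsilon}(t)$ is well defined, that $J_{\delta_\varepsilon}$ is bounded above and below, and that the Piola map and the extension operator $\mathcal{F}^{s}_{\delta_\varepsilon}$ satisfy Lemma~\ref{lm:Piola-est} and Proposition~\ref{prop:extension-slip} with constants depending only on $M$ and $\Omega$.

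Next I would address compatibility of the initial data. Following the convention stated before the definition of decoupled solutions, one arranges $\delta_\varepsilon(0)=\eta_0$ by first regularizing $\eta_0$ and extending $\mathbf{u}_0$ suitably, as in \cite[p.~234]{LR14}. The initial data are then admissible on $\Omega^{\delta_\varepsilon}(0)$, and their energy is bounded by $E(0)$ up to a constant independent of $\varepsilon$. With this in place, Proposition~\ref{prop:dec-lin-problem} applies directly to the pair $(\delta_\varepsilon,\mathbf{v}_\varepsilon)$ and yields a solution $(\mathbf{u}_\varepsilon,\eta_\varepsilon)\in\mathcal{S}^{\delta_\varepsilon}$.

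The remaining task is the energy inequality \eqref{eqn:eps-energy-ineq} with a constant independent of $\varepsilon$ and of $\mathbf{v}$. I would redo the Galerkin energy identity \eqref{eqn:discrete-energy-balance} for the domain $\Omega^{\delta_\varepsilon}$. Testing with $(\mathbf{u}_n,\partial_t\eta_n)$ kills the convective term, because $b(t,\mathbf{u},\mathbf{v},\mathbf{u})$ is antisymmetric in its last two arguments in the sense that $b(t,\mathbf{u},\mathbf{u},\mathbf{u})$-type contributions cancel after using the modified boundary term on $\Gamma^{\delta}$. The $\Gamma^{\delta}$ boundary term combines with the transport term from Reynolds' theorem into an exact time derivative. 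The forcing term is absorbed exactly as in the derivation of \eqref{eqn:energy-ineq}, using the trace estimate on the fixed Lipschitz region $\Omega_s$ from Remark~\ref{rmk:steady}. Because $\delta_\varepsilon$ is clamped, the domains are stationary near $z\in\{0,L\}$, and that estimate does not depend on $\delta_\varepsilon$. The bound on $\int|\nabla\mathbf{u}|^r$ for $r<2$ then follows from Proposition~\ref{prop:korn}, whose constant depends only on the Hölder character of $\partial\Omega^{\delta_\varepsilon}$ and on $M$, hence is uniform in $\varepsilon$. Finally, passing $n\to\infty$ by weak lower semicontinuity gives \eqref{eqn:eps-energy-ineq}.

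The main obstacle I expect is precisely this uniformity. The Korn constant in Proposition~\ref{prop:korn} must not depend on the smoothness of $\delta_\varepsilon$, only on an $H^2$ or Hölder bound for it. I would control this by relying on the bound $\|\delta_\varepsilon\|\lesssim \|\delta\|$ that $\mathcal{R}_\varepsilon$ provides in the relevant spaces, and on the fact that the cancellation of the convective term is exact, so that no $\mathbf{v}$-dependence enters the estimate.
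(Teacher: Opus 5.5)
Your route is the paper's own: apply Proposition~\ref{prop:dec-lin-problem} to the smooth pair $(\mathcal{R}_{\varepsilon}\delta,\mathcal{R}_{\varepsilon}\mathbf{v})$ and inherit its energy inequality. The gap is in the step you use to make that inequality independent of $\mathbf{v}$.

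\textbf{The convective term does not cancel.} Antisymmetry of $b$ in its last two slots only gives $b(t,\mathbf{a},\mathbf{c},\mathbf{c})=0$. For the linearization $b(t,\mathbf{u},\mathbf{v},\mathbf{q})$ tested with $\mathbf{q}=\mathbf{u}$, using $\operatorname{div}\mathbf{u}=0$, it gives instead
\[
b(t,\mathbf{u},\mathbf{v},\mathbf{u})=-b(t,\mathbf{u},\mathbf{u},\mathbf{v})=\int_{\Omega^{\delta}(t)}(\mathbf{u}\cdot\nabla)\mathbf{v}\cdot\mathbf{u}\,dx-\frac{1}{2}\int_{\partial\Omega^{\delta}(t)}(\mathbf{u}\cdot\boldsymbol{\nu})(\mathbf{u}\cdot\mathbf{v})\,dA .
\]
The interior part is generically nonzero: if $\mathbf{v}(x)=x$ near the support of a compactly supported $\mathbf{u}$, the whole expression equals $\int|\mathbf{u}|^{2}dx$. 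No boundary term of the weak formulation absorbs it. The $\Gamma^{\delta}$ term only cancels the Reynolds transport term, as you correctly say in your next sentence.

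Testing with $(\mathbf{u}_{n},\partial_{t}\eta_{n})$ therefore leaves $\int_{0}^{t}b(s,\mathbf{u}_{n},\mathcal{R}_{\varepsilon}\mathbf{v},\mathbf{u}_{n})\,ds$ in the balance. At best you get a Gronwall bound whose exponent involves $\|\mathcal{R}_{\varepsilon}\mathbf{v}\|_{W^{1,\infty}}$, so it depends on $\mathbf{v}$ and degenerates as $\varepsilon\to0$. That would break both $F(D)\subset D$ in the fixed-point step and the $\varepsilon$-uniform bound \eqref{eqn:eps-en-estimate}.

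\textbf{The fix.} Put the given field in the transport slot, $b(t,\mathcal{R}_{\varepsilon}\mathbf{v},\mathbf{u},\mathbf{q})$, as in \cite{LR14}. Then $b(t,\mathcal{R}_{\varepsilon}\mathbf{v},\mathbf{u},\mathbf{u})=0$ directly from the definition of $b$, with no divergence condition on $\mathcal{R}_{\varepsilon}\mathbf{v}$ and no boundary term needed. The balance \eqref{eqn:discrete-energy-balance} then holds exactly. This is precisely the ordering under which the paper's \eqref{eqn:discrete-energy-balance}, which contains no convective term, is valid, so the paper's displayed $b(t,\mathbf{u},\mathbf{v},\mathbf{q})$ must be read this way. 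At the fixed point the term becomes $b(t,\mathcal{R}_{\varepsilon}\mathbf{u}_{\varepsilon},\mathbf{u}_{\varepsilon},\mathbf{q})$. It still converges to $b(t,\mathbf{u},\mathbf{u},\mathbf{q})$ by the strong $L^{2}$ compactness of Proposition~\ref{prop:compactness}.

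\textbf{A smaller point on uniformity.} Your $\varepsilon$-uniformity argument for the constant in Proposition~\ref{prop:korn} and for the in/outflow trace estimate uses information that a general $\delta\in C^{0}(I\times\overline{\omega})$ does not carry. $\mathcal{R}_{\varepsilon}$ yields no $\varepsilon$-uniform H\"older or $H^{2}$ bound on $\mathcal{R}_{\varepsilon}\delta$ from continuity alone. Neither the hypothesis nor the set $D$ forces $\mathcal{R}_{\varepsilon}\delta$ to vanish near $z\in\{0,L\}$, so the \emph{clamped} argument is not available either.

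For fixed $\varepsilon$ this is harmless. $\Omega^{\mathcal{R}_{\varepsilon}\delta}$ is then a Lipschitz domain with $\varepsilon$-dependent constants, and the statement does not require uniformity. $\varepsilon$-uniform control of $\int|\nabla\mathbf{u}|^{r}$ can only be recovered after the fixed point, when $\delta=\eta_{\varepsilon}$ is bounded in $L^{\infty}(I;H_{0}^{2}(\omega))$ by the energy.
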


\subsection{The (set-valued) fixed point argument}\label{ssec:set-v-fixed-point}
We now aim to obtain a fixed-point for the mapping $\left(\delta,\mathbf{v}\right)\mapsto\left(\eta,\mathbf{u}\right)$. Since it may be multi-valued, we employ the following well-known result due to Kakutani-Gliksberg-Fan
which can be found in \cite{Granas-Dug}:
\begin{theorem}\label{thm:kakutani}
    Let $C$ be a convex subset of a normed vector space $Z$ and $F:C \mapsto \mathcal{P}(C)$ a set-valued mapping, where $\mathcal{P}(C)$ denotes the subsets of $C$. We assume the following:
    \begin{enumerate}[(i)]
        \item $F$ is upper-semicontinuous; or, equivalently, it has the \emph{closed-graph} property: if $c_n \to c$ and $z_n \in F(c_n)$ with $z_n \to z$ then $z \in F(c)$
        \item $F(C)$ is contained in a compact subset of $C$
        \item For all $z\in C$, $F(z)$ is non-empty, convex and compact.
    \end{enumerate}
 Then $F$ has a fixed point, that is there exists $c_0 \in C$ with $c_0 \in F(c_0)$.  
\end{theorem}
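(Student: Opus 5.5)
We reduce to the finite-dimensional Kakutani theorem by a Schauder-type projection onto finite-dimensional polytopes inside $C$, and then pass to the limit using compactness (ii) and the closed-graph property (i).

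\emph{Step 1: an approximate problem for each $\varepsilon>0$.} Let $K$ be a compact subset of $C$ with $F(C)\subset K$, as given by (ii). Fix $\varepsilon>0$ and cover $K$ by finitely many balls $B(y_1,\varepsilon),\dots,B(y_m,\varepsilon)$ with $y_i\in K$. Set $L_\varepsilon:=\operatorname{conv}\{y_1,\dots,y_m\}$. It is a compact convex subset of a finite-dimensional affine space, and $L_\varepsilon\subset C$ by convexity of $C$. Let $P_\varepsilon:K\to L_\varepsilon$ be the Schauder projection
\[
P_\varepsilon y:=\frac{\sum_i \mu_i(y)\,y_i}{\sum_i\mu_i(y)},\qquad \mu_i(y):=\max\{0,\varepsilon-\|y-y_i\|\}.
\]
It is continuous and satisfies $\|P_\varepsilon y-y\|<\varepsilon$ for all $y\in K$. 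On $L_\varepsilon$ define the set-valued map
\[
G_\varepsilon(x):=L_\varepsilon\cap\bigl(F(x)+\overline{B}_\varepsilon(0)\bigr).
\]

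\emph{Step 2: $G_\varepsilon$ satisfies the hypotheses of finite-dimensional Kakutani.} Each $G_\varepsilon(x)$ is nonempty, since $P_\varepsilon y\in G_\varepsilon(x)$ for any $y\in F(x)$. It is convex, because $F(x)$ and the ball are convex. It is compact, because $F(x)$ is compact, so $F(x)+\overline{B}_\varepsilon$ is closed, and $L_\varepsilon$ is compact. For the closed graph, take $x_n\to x$ and $w_n\in G_\varepsilon(x_n)$ with $w_n\to w$. Write $w_n=z_n+b_n$ with $z_n\in F(x_n)\subset K$ and $\|b_n\|\le\varepsilon$. Extract a subsequence $z_n\to z$; by (i), $z\in F(x)$, hence $w\in G_\varepsilon(x)$. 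The finite-dimensional Kakutani theorem on $L_\varepsilon$ then gives $x_\varepsilon\in G_\varepsilon(x_\varepsilon)$. This means there is $z_\varepsilon\in F(x_\varepsilon)$ with $\|x_\varepsilon-z_\varepsilon\|\le\varepsilon$.

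\emph{Step 3: limit $\varepsilon\to0$.} Since $z_\varepsilon\in K$ and $K$ is compact, a subsequence satisfies $z_\varepsilon\to c_0\in K\subset C$. Then $x_\varepsilon\to c_0$ as well. Applying (i) to $x_\varepsilon\to c_0$ and $z_\varepsilon\in F(x_\varepsilon)$, $z_\varepsilon\to c_0$, we obtain $c_0\in F(c_0)$.

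\emph{Main obstacle: the finite-dimensional Kakutani theorem.} If we may not simply cite it, we prove it from Brouwer's theorem as follows. Take a simplicial subdivision of the simplex containing $L_\varepsilon$ (retracting onto $L_\varepsilon$) with mesh $1/k$. Pick a point of $G_\varepsilon(v)$ at each vertex $v$ and extend piecewise-affinely to a continuous map $f_k$. Brouwer's theorem gives a fixed point $x_k=f_k(x_k)$. Then $x_k$ is a convex combination of values $f_k(v^k_j)\in G_\varepsilon(v^k_j)$ taken at vertices $v^k_j$ within $1/k$ of $x_k$. Passing to the limit, with Carathéodory's theorem bounding the number of terms, and using the closed graph together with convexity of the values, we get a fixed point of $G_\varepsilon$. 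This limit passage, in particular keeping the convex weights and vertex values convergent along a common subsequence, is the delicate point; everything else is routine.
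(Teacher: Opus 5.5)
Your proof is correct. It differs from the paper only in that the paper gives no proof at all: it quotes the Kakutani--Glicksberg--Fan theorem from \cite{Granas-Dug} as a black box.

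Your argument is the standard self-contained reduction to Brouwer:
\begin{enumerate}
\item take an $\varepsilon$-net of the compact $K\supset F(C)$;
\item use the Schauder projection onto the polytope $L_\varepsilon=\operatorname{conv}\{y_1,\dots,y_m\}\subset C$;
\item pass to the fattened map $G_\varepsilon(x)=L_\varepsilon\cap\bigl(F(x)+\overline{B}_\varepsilon(0)\bigr)$ and apply finite-dimensional Kakutani;
\item let $\varepsilon=\varepsilon_k\to0$.
\end{enumerate}

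What it buys over the citation is that it shows exactly where each hypothesis enters:
\begin{itemize}
\item convexity of $C$ places $L_\varepsilon$ inside $C$;
\item compactness of $K\subset C$ gives the finite net, the convergent subsequence $z_n\to z$ in your closed-graph check for $G_\varepsilon$, and the final limit $c_0\in K\subset C$;
\item compact convex values of $F$ make $G_\varepsilon(x)$ closed and convex;
\item the closed-graph property (i) is used twice.
\end{itemize}
The citation, of course, buys brevity.

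Three small remarks on your sketch of the finite-dimensional step:
\begin{itemize}
\item Carath\'eodory's theorem is unnecessary. The Brouwer fixed point $x_k$ lies in a single cell of the triangulation, so it is already a convex combination of at most $d+1$ vertex values.
\item The ``delicate'' limit passage is only the extraction of a common convergent subsequence from finitely many sequences in compact sets: weights in the standard $d$-simplex, vertex values in $L_\varepsilon$, vertices converging to the limit of $x_k$ because the mesh tends to zero. Closed graph then puts each limiting value in $G_\varepsilon$ of the limit point, and convexity finishes.
\item If you work on an enclosing simplex with a retraction $r$ onto $L_\varepsilon$, apply the argument to $G_\varepsilon\circ r$. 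A fixed point satisfies $x\in G_\varepsilon(r(x))\subset L_\varepsilon$, which forces $r(x)=x$. Alternatively, triangulate the polytope $L_\varepsilon$ directly.
\end{itemize}
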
 
Let $\varepsilon>0$ be a regularizing parameter.
In order to apply Theorem~\ref{thm:kakutani} we set the normed space 
\[Z:=C^{0}\left(\overline{I};C\left(\overline{\omega}\right)\right)\times L^{2}\left(I\times\mathbb{R}^{3}\right).\]

Assuming that $\left\Vert \eta_{0}\right\Vert _{L_{t,x}^{\infty}}<R/2$, we choose the  convex subset  \[
D:=\left\{ \left(\delta,\mathbf{v}\right)\in Z:\delta\left(0,\cdot\right)=\eta_{0},\ \left\Vert \delta\right\Vert _{L_{t,x}^{\infty}}\le\alpha:=\frac{R+\left\Vert \eta_{0}\right\Vert _{L_{x}^{\infty}}}{2},\ \left\Vert \mathbf{v}\right\Vert _{L_{t}^{2}L_{x}^{2}}\le C_{1}\right\} 
\]
where the constant $C_1$ can be chosen so that $\left\Vert \mathbf{u}\right\Vert _{L_{t}^{2}L_{x}^{2}} \le C_1$ in accordance to \eqref{eqn:energy}.

We define the mapping 
\[F:D \subset Z \mapsto \mathcal{P}(Z)\]
as follows: 
for $\left(\delta,\mathbf{v}\right)\in D$ let $F\left(\delta,\mathbf{v}\right)$ be the set of all $\left(\eta,\mathbf{u}\right)\in\mathcal{S}^{\mathcal{R}_{\varepsilon}\delta}$ which are solutions to the decoupled and linearized problem around $\left(\mathcal{R}_{\varepsilon}\delta,\mathcal{R}_{\varepsilon}\mathbf{v}\right)$. Due to Proposition~\ref{prop:existence-eps-reg-soln} we see that this definition is meaningful. 
The existence of the fixed-point follows the same lines as of \cite[Section 3.3]{LR14} or \cite[Section 5.2]{mindrila-roy-multuilayered}.
Let us observe that due to \eqref{eqn:decoupled-energy-ineq} and the fact that $V_{s}\hookrightarrow C^{0,1-\theta}\left(\left(0,T\right);C^{0,2\theta-1}\left(\omega\right)\right)$ for any $\theta \in (1/2,1)$ we can obtain  that \begin{equation}
\begin{aligned}\left|\eta\left(t,x\right)\right| & \le\left|\eta\left(t,x\right)-\eta_{0}\left(x\right)\right|+\left|\eta_{0}\left(x\right)\right|\\
 & \lesssim T^{1-\theta}+\left\Vert \eta_{0}\right\Vert _{L_{x}^{\infty}}<T^{1-\theta}+R/2<R
\end{aligned}
\end{equation} 
provided we choose $T$ sufficiently small. 
Let us check the conditions of Theorem~\ref{thm:kakutani}:
\begin{itemize}
    \item For any $\left(\delta,\mathbf{v}\right)\in D$ we see that $F(\delta, \mathbf{v})$ is non-empty, convex (since the problem is linearized), and compact due to Proposition~\ref{prop:compactness}.
    \item The fact that $F\left(D\right)\subset D$ is compact follows again from Propositon~\ref{prop:compactness}, with the same arguments, even simplified due to the regularized geometry $\Omega^{\mathcal{R}_{\varepsilon}\delta}(t)$.
    \item In order to prove that $F$ is upper-semicontinuous, let $\left(\delta_{n},\mathbf{v}_{n}\right)\to\left(\delta,\mathbf{v}\right)$ in $Z$ and let $\left(\eta_{n},\mathbf{u}_{n}\right)\in F\left(\delta_{n},\mathbf{v}_{n}\right)$ with $\left(\eta_{n},\mathbf{u}_{n}\right)\to\left(\eta,\mathbf{u}\right)$ in $Z$. We have to prove that $\left(\eta,\mathbf{u}\right)\in F\left(\delta,\mathbf{v}\right)$. Due to the energy estimates \eqref{eqn:decoupled-energy-ineq} and the compact embedding $F(D)\subset D$ we have that 
    \begin{equation}
        \begin{aligned}\eta_{n}\rightharpoonup\eta & \quad\text{in}\ L^{\infty}\left(I;H_{0}^{2}\left(\omega\right)\right)\\
\partial_{t}\eta_{n}\to\partial_{t}\eta & \quad \text{in}\ L^{2}\left(I;L^{2}\left(\omega\right)\right)\\
\mathbf{u}_{n}\chi_{\mathcal{R}_{\varepsilon}\delta_{n}}\to\mathbf{u}\chi_{\mathcal{R}_{\varepsilon}\delta} & \quad \text{in}\ L^{2}\left(I;L^{2}\left(\mathbb{R}^{3}\right)\right)\\
\mathbf{u}_{n}\chi_{\mathcal{R}_{\varepsilon}\delta_{n}}\rightharpoonup\mathbf{u}\chi_{\mathcal{R}_{\varepsilon}\delta} &\quad \text{in}\ L^{2}\left(I;W^{1,2-}\left(\mathbb{R}^{3}\right)\cap E^{1,2}\left(\mathbb{R}^{3}\right)\right)\\
\mathbf{u}_{n}\circ\phi_{\mathcal{R}_{\varepsilon}\delta_{n}}-\partial_{t}\eta_{n}\mathbf{e}_{r}\to L&\quad\text{in}\ L^{2}\left(I;L^{2}\left(\omega\right)\right)
\end{aligned}
    \end{equation}
\end{itemize}

In order to prove that $\mathbf{u}\circ\phi_{\mathcal{R}_{\varepsilon}\delta}\cdot\nu^{\mathcal{R}_{\varepsilon}\delta}=\partial_{t}\eta\mathbf{e}_{r}\cdot\nu^{\mathcal{R}_{\varepsilon}\delta}$, since $\mathbf{u}_{n}\circ\phi_{\mathcal{R}_{\varepsilon}\delta_{n}}\cdot\nu^{\mathcal{R}_{\varepsilon}\delta_{n}}=\partial_{t}\eta_{n}\mathbf{e}_{r}\cdot\nu^{\mathcal{R}_{\varepsilon}\delta_{n}}$ and, due to $\mathcal{R}_{\varepsilon}$ we have that $\boldsymbol{\nu}^{\mathcal{R}_{\varepsilon}\delta_{n}}\to\boldsymbol{\nu}^{\mathcal{R}_{\varepsilon}\delta}$ pointwisely and uniformly, it suffices to prove that $\mathbf{u}_{n}\circ\phi_{\mathcal{R}_{\varepsilon}\delta_{n}}\to\mathbf{u}\circ\phi_{\mathcal{R}_{\varepsilon}\delta}$ in $L^{2}_{t,x}$. For this, we write 
\[
\mathbf{u}_{n}\circ\phi_{\mathcal{R}_{\varepsilon}\delta_{n}}-\mathbf{u}\circ\phi_{\mathcal{R}_{\varepsilon}\delta}=\mathbf{u}_{n}\circ\phi_{\mathcal{R}_{\varepsilon}\delta_{n}}-\mathbf{u}_{n}\circ\phi_{\mathcal{R}_{\varepsilon}\delta}+\mathbf{u}_{n}\circ\phi_{\mathcal{R}_{\varepsilon}\delta}-\mathbf{u}\circ\phi_{\mathcal{R}_{\varepsilon}\delta}.
\] The first summand is $\int_{0}^{1}\frac{d}{ds}\mathbf{u}_{n}\circ\left(s\phi_{\mathcal{R}_{\varepsilon}\delta_{n}}+\left(1-s\right)\phi_{\mathcal{R}_{\varepsilon}\delta}\right)ds$ which can be bounded with the estimate $\left\Vert \mathbf{u}_{n}\right\Vert _{L_{t}^{2}W_{x}^{1,2-}}$ and converges to zero, while for the second we use that $\phi_{\mathcal{R}_{\varepsilon}\delta_{n}}\to\phi_{\mathcal{R}_{\varepsilon}\delta}$ pointwisely and uniformly. In particular one can conclude that $L=\mathbf{u}\circ\phi_{\mathcal{R}_{\varepsilon}\delta}-\partial_{t}\eta\mathbf{e}_{r}$.

Let us observe that the weak formulation 
\begin{equation}\label{eqn:weak-form-approx-n}
\begin{aligned}\int_{\Omega^{\mathcal{R}_{\varepsilon}\delta_{n}}\left(t\right)}\mathbf{u}_{n}\cdot\mathbf{q}\left(t\right)dx+\int_{0}^{t}\int_{\Omega^{\mathcal{R}_{\varepsilon}\delta_{n}}\left(s\right)}-\mathbf{u}_{n}\cdot\partial_{t}\mathbf{q}+\mathbb{D}\mathbf{u}_{n}:\mathbb{D}\mathbf{q}dxds & +\\
\int_{0}^{t}b\left(s,\mathbf{u}_{n},\mathcal{R}_{\varepsilon}\mathbf{v}_{n},\mathbf{q}\right)ds-\frac{1}{2}\int_{0}^{t}\int_{\Gamma^{\mathcal{R}_{\varepsilon}\delta_{n}}\left(s\right)}\left(\mathbf{u}_{n}\cdot\mathbf{q}\right)\left(\partial_{t}\mathcal{R}_{\varepsilon}\delta_{n}\mathbf{e}_{r}\circ\phi_{\mathcal{R}_{\varepsilon}\delta_{n}}^{-1}\right)\cdot\boldsymbol{\nu}^{\mathcal{R}_{\varepsilon}\delta_{n}}dA_{\delta}ds & +\\
\frac{1}{\alpha}\int_{0}^{t}\int_{\omega}\left(\mathbf{u}_{n}\circ\phi_{\mathcal{R}_{\varepsilon}\delta_{n}}-\partial_{t}\eta\mathbf{e}_{r}\right)\cdot\left(\mathbf{q}\circ\phi_{\mathcal{R}_{\varepsilon}\delta_{n}}-\xi\mathbf{e}_{r}\right)J_{\mathcal{R}_{\varepsilon}\delta_{n}}dAds & +\\
\int_{\omega}\partial_{t}\eta_{n}\cdot\partial_{t}\xi\left(t\right)dA+\int_{0}^{t}\int_{\omega}-\partial_{t}\eta_{n}\cdot\partial_{t}\xi+\nabla^{2}\eta_{n}:\nabla^{2}\xi dAds & =\\
\int_{0}^{t}\left\langle F\left(t\right),\mathbf{q}\right\rangle ds+\int_{\Omega^{\mathcal{R}_{\varepsilon}\delta_{n}}\left(0\right)}\mathbf{u}_{0}\cdot\mathbf{q}\left(0\right)dx+\int_{\omega}\eta_{1}\xi dA.
\end{aligned}
\end{equation}
is valid for all test functions $\left(\mathbf{q},\xi\right)=\left(\mathbf{q}_{n},\xi_{n}\right)\in\mathcal{T}^{\mathcal{R}_{\varepsilon}\delta_{n}}$. The only problem in letting $n\to \infty$ is that  the limiting weak formulation has to be valid for all the test functions $\left(\mathbf{q},\xi\right)\in\mathcal{T}^{\mathcal{R}_{\varepsilon}\delta}$. 
To this end, let us use in \eqref{eqn:weak-form-approx-n} the pair $\left(\mathcal{F}_{\mathcal{R}_{\varepsilon}\delta_{n}}^{s}\left(\xi\right),\xi\right)$, where the operator $\mathcal{F}^{s}$ was introduced in Proposition~\ref{prop:extension-slip}. To observe that $\mathcal{F}_{\mathcal{R}_{\varepsilon}\delta_{n}}^{s}\left(\xi\right)\to\mathcal{F}_{\mathcal{R}_{\varepsilon}\delta}^{s}\left(\xi\right)$ as $n\to \infty$, in the appropriate spaces, we see that the regularizing effect $\mathcal{R}_{\varepsilon}$ simplifies the argument. Thus, for $\left(\mathbf{q},\xi\right)=\left(\mathcal{F}_{\mathcal{R}_{\varepsilon}\delta}^{s}\left(\xi\right),\xi\right)\in\mathcal{T^{\mathcal{R}_{\varepsilon}\delta}}$, we obtain that
\begin{equation}\label{eqn:weak-form-approx-limit}
\begin{aligned}\int_{\Omega^{\mathcal{R}_{\varepsilon}\delta}\left(t\right)}\mathbf{u}\cdot\mathbf{q}\left(t\right)dx+\int_{0}^{t}\int_{\Omega^{\mathcal{R}_{\varepsilon}\delta}\left(s\right)}-\mathbf{u}\cdot\partial_{t}\mathbf{q}+\mathbb{D}\mathbf{u}:\mathbb{D}\mathbf{q}dxds & +\\
\int_{0}^{t}b\left(t,\mathbf{u},\mathcal{R}_{\varepsilon}\mathbf{v},\mathbf{q}\right)ds & +\\
-\frac{1}{2}\int_{0}^{t}\int_{\Gamma^{\mathcal{R}_{\varepsilon}\delta}\left(s\right)}\left(\mathbf{u}_{n}\cdot\mathbf{q}\right)\left(\partial_{t}\mathcal{R}_{\varepsilon}\delta\mathbf{e}_{r}\circ\phi_{\mathcal{R}_{\varepsilon}\delta}^{-1}\right)\cdot\boldsymbol{\nu}^{\mathcal{R}_{\varepsilon}\delta}dA_{\delta}ds & +\\
\frac{1}{\alpha}\int_{0}^{t}\int_{\omega}\left(\mathbf{u}\circ\phi_{\mathcal{R}_{\varepsilon}\delta}-\partial_{t}\eta\mathbf{e}_{r}\right)\cdot\left(\mathbf{q}\circ\phi_{\mathcal{R}_{\varepsilon}\delta}-\xi\mathbf{e}_{r}\right)J_{\mathcal{R}_{\varepsilon}\delta}dAds & +\\
\int_{\omega}\partial_{t}\eta\cdot\partial_{t}\xi\left(t\right)dA+\int_{0}^{t}\int_{\omega}-\partial_{t}\eta\cdot\partial_{t}\xi+\nabla^{2}\eta_{n}:\nabla^{2}\xi dAds & =\\
\int_{0}^{t}\left\langle F\left(t\right),\mathbf{q}\right\rangle ds+\int_{\Omega^{\mathcal{R}_{\varepsilon}\delta}\left(0\right)}\mathbf{u}_{0}\cdot\mathbf{q}\left(0\right)dx+\int_{\omega}\eta_{1}\xi dA.
\end{aligned}
\end{equation}

We are left to prove that we may take $\left(\mathbf{q}-\mathcal{F}_{\mathcal{R}_{\varepsilon}\delta}^{s}\left(\xi\right),0\right)\in\mathcal{T^{\mathcal{R}_{\varepsilon}\delta}}$ in \eqref{eqn:weak-form-approx-limit},  for an arbitrary $\left(\mathbf{q},\xi\right)\in\mathcal{T^{\mathcal{R}_{\varepsilon}\delta}}$.
To this end approximate $\mathbf{q}$ by $\mathbf{q}_{n}:=\mathcal{J}_{\mathcal{R}_{\varepsilon}\delta_{n}}\mathcal{J}_{\mathcal{R}_{\varepsilon}\delta}^{-1}\left(\mathbf{q}\right)$ and use $\left(\mathbf{q}_{n},0\right)\in\mathcal{T}^{\mathcal{R}_{\varepsilon}\delta_{n}}$ in \eqref{eqn:weak-form-approx-limit} and let $n\to \infty$. In this way, the mapping $F$ has also closed graph.

Thus, we can  apply Theorem~\ref{thm:kakutani} and we obtain the following:

\begin{proposition}\label{prop:existence-eps-reg-soln} 
There exists a time $T_{\star}>0$ such that for any $0<T<T_{\star}$ and for any $\varepsilon>0$, there exists at least one pair $\left(\mathbf{u}_{\varepsilon},\eta_{\varepsilon}\right)\in\mathcal{S}^{\mathcal{R}_{\varepsilon}\eta_{\varepsilon}}$ with the following property: for almost every $t\in I=[0,T]$ and for all $\left(\mathbf{q},\xi\right)\in\mathcal{T}^{\mathcal{R}_{\varepsilon}\eta_{\varepsilon}}$, it holds that
 \begin{equation}\label{eqn:eps-weak-formulation}
\begin{aligned}\int_{\Omega^{\mathcal{R}_{\varepsilon}\eta_{\varepsilon}}\left(t\right)}\mathbf{u}_{\varepsilon}\cdot\mathbf{q}\left(t\right)+\int_{0}^{t}\int_{\Omega^{\mathcal{R}_{\varepsilon}\eta_{\varepsilon}}\left(t\right)}-\mathbf{u}_{\varepsilon}\cdot\partial_{t}\mathbf{q}+\mathbb{D}\mathbf{u}_{\varepsilon}:\mathbb{D}\mathbf{q}dxds & +\\
\int_{0}^{t}b\left(s,\mathbf{u}_{\varepsilon},\mathcal{R}_{\varepsilon}\mathbf{u}_{\varepsilon},\mathbf{q}\right)ds & +\\
\int_{0}^{t}\int_{\Gamma^{\mathcal{R}_{\varepsilon}\eta_{\varepsilon}}\left(s\right)}-\frac{1}{2}\left(\mathbf{u}_{\varepsilon}\cdot\mathbf{q}\right)\left(\partial_{t}\mathcal{R}_{\varepsilon}\eta_{\varepsilon}\mathbf{e}_{r}\circ\phi_{\mathcal{R}_{\varepsilon}\eta_{\varepsilon}}^{-1}\right)\cdot\boldsymbol{\nu}^{\mathcal{R}_{\varepsilon}\eta_{\varepsilon}}dA_{\mathcal{R}_{\varepsilon}\eta_{\varepsilon}}ds & +\\
\frac{1}{\alpha}\int_{0}^{t}\int_{\omega}\left(\mathbf{u}_{\varepsilon}\circ\phi_{\mathcal{R}_{\varepsilon}\eta_{\varepsilon}}-\partial_{t}\eta_{\varepsilon}\mathbf{e}_{r}\right)\cdot\left(\mathbf{q}\circ\phi_{\mathcal{R}_{\varepsilon}\eta_{\varepsilon}}-\xi\mathbf{e}_{r}\right)J_{\mathcal{R}_{\varepsilon}\eta_{\varepsilon}}dAds & +\\
\int_{\omega}\partial_{t}\eta_{\varepsilon}\cdot\xi\left(t\right)dA+\int_{0}^{t}\int_{\omega}-\partial_{t}\eta_{\varepsilon}\cdot\partial_{t}\xi+\nabla^{2}\eta_{\varepsilon}:\nabla^{2}\xi dAds & =\\
\int_{0}^{t}\left\langle F\left(t\right),\mathbf{q}\right\rangle ds+\int_{\Omega^{\mathcal{R}_{\varepsilon}\eta_{\varepsilon}}\left(0\right)}\mathbf{u}_{0}\cdot\mathbf{q}\left(0\right)dx+\int_{\omega}\eta_{1}\xi\left(0\right)dA.
\end{aligned}
    \end{equation}

In addition, we have the energy estimate 
\begin{equation}\label{eqn:eps-en-estimate}
\sup_{t\in(0,T)}E_{\mathcal{R}_{\varepsilon}\eta_{\varepsilon}}\left(t\right)+\int_{0}^{T}E_{slip,\mathcal{R}_{\varepsilon}\eta_{\varepsilon}}\left(s\right)+D_{\mathcal{R}_{\varepsilon}\eta_{\varepsilon}}\left(s\right)ds\lesssim E\left(0\right)+\left\Vert P\right\Vert _{L_{t}^{2}}^{2}
\end{equation}
where the quantities are the analogues of \eqref{eqn:notation-energy-delta}.
\end{proposition}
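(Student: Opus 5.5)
The proof is an application of Theorem~\ref{thm:kakutani} to the map $F$ on the convex set $D\subset Z$ introduced above, followed by identifying the fixed point. We fix $\varepsilon>0$ and choose $T_\star$ small enough that every $(\eta,\mathbf{u})\in F(\delta,\mathbf{v})$ satisfies $\|\eta\|_{L^\infty_{t,x}}\le \frac{R+\|\eta_0\|_{L^\infty_x}}{2}$. This follows from the uniform bound \eqref{eqn:eps-energy-ineq} together with the embedding $V_s\hookrightarrow C^{0,1-\theta}(I;C^{0,2\theta-1}(\omega))$, $\theta\in(1/2,1)$, exactly as in the estimate $|\eta(t,x)|\lesssim T^{1-\theta}+\|\eta_0\|_{L^\infty_x}$ above. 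The constant $C_1$ is taken from \eqref{eqn:eps-energy-ineq}, so $\|\mathbf{u}\|_{L^2_tL^2_x}\le C_1$. Since the right-hand side of \eqref{eqn:eps-energy-ineq} depends only on $E(0)$ and $\|P\|_{L^2_t}$, and since by property (d) of $\mathcal{R}_\varepsilon$ we have $\|\mathcal{R}_\varepsilon\delta\|_{L^\infty_{t,x}}\le \|\delta\|_{L^\infty_{t,x}}+\varepsilon$ (so we restrict to small $\varepsilon$), the time $T_\star$ does not depend on $\varepsilon$. Hence $F(D)\subset D$.

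Next we verify the hypotheses of Theorem~\ref{thm:kakutani}. Proposition~\ref{prop:exist-dec-eps-reg} shows that $F(\delta,\mathbf{v})$ is nonempty. It is convex because, once $(\mathcal{R}_\varepsilon\delta,\mathcal{R}_\varepsilon\mathbf{v})$ is frozen, the variational formulation is linear in $(\mathbf{u},\eta)$. The energy inequality, however, is not linear, so we would define the solution set through the linear formulation together with the a priori bound, and check that this bound is a convex constraint (a sublevel set of a convex functional). Relative compactness of $F(D)$ in $Z$ comes from two sources: the bound on $\eta$ in $V_s$ combined with the compact embedding \eqref{eqn:comp-emd-AL} and Arzel\`a--Ascoli gives compactness of the $\eta$ component in $C(\overline I;C(\overline\omega))$, and Proposition~\ref{prop:compactness} gives compactness of $\mathbf{u}\chi_{\Omega^{\mathcal{R}_\varepsilon\delta}}$ in $L^2(I\times\mathbb{R}^3)$. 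Both simplify here because the geometry $\mathcal{R}_\varepsilon\delta$ is smooth. For the closed-graph property we take the convergences listed above and pass to the limit in \eqref{eqn:weak-form-approx-n}.

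At a fixed point $(\eta_\varepsilon,\mathbf{u}_\varepsilon)\in F(\eta_\varepsilon,\mathbf{u}_\varepsilon)$, the formulation becomes \eqref{eqn:eps-weak-formulation}, with convective term $b(s,\mathbf{u}_\varepsilon,\mathcal{R}_\varepsilon\mathbf{u}_\varepsilon,\mathbf{q})$, and the bound \eqref{eqn:eps-en-estimate} is inherited directly.

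I expect the closed-graph step to be the main obstacle, because the test space $\mathcal{T}^{\mathcal{R}_\varepsilon\delta_n}$ changes with $n$. To handle it, I would first test \eqref{eqn:weak-form-approx-n} with $(\mathcal{F}^s_{\mathcal{R}_\varepsilon\delta_n}(\xi),\xi)$ and use the smoothness of $\mathcal{R}_\varepsilon\delta_n\to\mathcal{R}_\varepsilon\delta$ in $C^4$ to get $\mathcal{F}^s_{\mathcal{R}_\varepsilon\delta_n}(\xi)\to\mathcal{F}^s_{\mathcal{R}_\varepsilon\delta}(\xi)$ strongly in $W^{1,2}$, with convergent time derivatives (Lemma~\ref{lm:pointwise-F-s-eta}). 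I would then treat the remainder $\mathbf{q}-\mathcal{F}^s_{\mathcal{R}_\varepsilon\delta}(\xi)$, which has zero normal trace, through the Piola transfer $\mathcal{J}_{\mathcal{R}_\varepsilon\delta_n}\mathcal{J}^{-1}_{\mathcal{R}_\varepsilon\delta}$; Lemma~\ref{lm:Piola} guarantees that this keeps the remainder in $\mathcal{T}^{\mathcal{R}_\varepsilon\delta_n}$. The remaining convergences are as follows. The boundary and slip terms converge because $\mathbf{u}_n\circ\phi_{\mathcal{R}_\varepsilon\delta_n}\to\mathbf{u}\circ\phi_{\mathcal{R}_\varepsilon\delta}$ strongly in $L^2_{t,x}$, by the splitting argument above. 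The convective term converges since $\mathcal{R}_\varepsilon\mathbf{v}_n\to\mathcal{R}_\varepsilon\mathbf{v}$ in $C^1$ while $\mathbf{u}_n\rightharpoonup\mathbf{u}$ weakly. Finally, the energy inequality passes to the limit by weak lower semicontinuity.
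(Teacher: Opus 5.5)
Your proposal is correct and follows essentially the same route as the paper. Both apply the Kakutani--Glicksberg--Fan theorem on $D$, obtaining $F(D)\subset D$ from the H\"older embedding of $V_s$ for small $T$, convexity from linearity of the frozen problem, and compactness from Proposition~\ref{prop:compactness}. The closed-graph property is handled via the test functions $(\mathcal{F}^{s}_{\mathcal{R}_\varepsilon\delta_n}(\xi),\xi)$ together with the Piola transfer $\mathcal{J}_{\mathcal{R}_\varepsilon\delta_n}\mathcal{J}^{-1}_{\mathcal{R}_\varepsilon\delta}$ of the zero-normal-trace remainder. Your extra remarks make explicit points the paper leaves implicit: the energy bound as a convex sublevel-set constraint, the $\varepsilon$-independence of $T_\star$, and lower semicontinuity of the energy in the closed-graph step.
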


\subsection{Limit passage}\label{ssec:eps-limit-passage}

The final step towards the proof of Theorem~\ref{thm:main} is to perform the limit passage $\varepsilon \to 0$ in Proposition~\ref{prop:existence-eps-reg-soln}.
From \eqref{eqn:eps-en-estimate} we infer the existence of a pair $\left(\mathbf{u},\eta\right)$ and of a subsequence of $\left(\mathbf{u}_{\varepsilon},\eta_{\varepsilon}\right)_{\varepsilon>0}$ (which we do not relabel) such that 
\begin{equation}\label{eqn:weak-limit-eps}
\begin{aligned}\mathbf{u}_{\varepsilon}\chi_{\Omega^{\mathcal{R}_{\varepsilon}\eta_{\varepsilon}}}\rightharpoonup\  & \mathbf{u}\quad\text{in}\ L^{\infty}\left(I;L^{2}\left(\mathbb{R}^{3}\right)\right)\\
\mathbb{D}\mathbf{u}_{\varepsilon}\chi_{\Omega^{\mathcal{R}_{\varepsilon}\eta_{\varepsilon}}}\rightharpoonup\  & \overline{\mathbb{D}\mathbf{u}}\quad\text{in}\ L^{2}\left(I;L^{2}\left(\mathbb{R}^{3}\right)\right)\\
\nabla\mathbf{u}_{\varepsilon}\chi_{\Omega^{\mathcal{R}_{\varepsilon}\eta_{\varepsilon}}}\rightharpoonup\  & \nabla\mathbf{u}\quad\text{in}\ L^{2}\left(I;L^{2-}\left(\mathbb{R}^{3}\right)\right)\\
\eta_{\varepsilon},\mathcal{R}_{\varepsilon}\eta_{\varepsilon}\rightharpoonup\  & \eta\quad\text{in}\ L^{\infty}\left(I;H_{0}^{2}\left(\omega\right)\right)\\
\partial_{t}\eta_{\varepsilon}\rightharpoonup\  & \partial_{t}\eta\quad\text{in}\ W^{1,\infty}\left(I;L^{2}\left(\omega\right)\right)\\
\mathbf{u}_{\varepsilon}\circ\phi_{\mathcal{R}_{\varepsilon}\eta_{\varepsilon}}-\partial_{t}\eta_{\varepsilon}\mathbf{e}_{r}\rightharpoonup\  & \mathbf{u}_{}\circ\phi_{\eta}-\partial_{t}\eta\mathbf{e}_{r}\quad\text{in}\ L^{2}\left(I;L^{2}\left(\omega\right)\right)
\end{aligned}
\end{equation}
where $\overline{\mathbb{D}\mathbf{u}}$ represents only a notation for the weak limit. To prove that $\overline{\mathbb{D}\mathbf{u}}=\mathbb{D}\mathbf{u}$ we use the weak convergence of the gradients from \eqref{eqn:weak-limit-eps} to obtain  $\mathbb{D}\mathbf{u}_{\varepsilon}\chi_{\Omega^{\mathcal{R}_{\varepsilon}\eta_{\varepsilon}}}\rightharpoonup\mathbb{D}\mathbf{u}\quad\text{in}\ L_{t}^{2}L^{2-}\left(\mathbb{R}^{3}\right)$.

The convergence of the traces is due to Lemma~\ref{lm:trace}.

We now recall Proposition~\ref{prop:compactness} to see that with an analogous argument we can conclude that 
\begin{equation}\label{eqn:strong-limit-eps}
  \begin{aligned}\mathbf{u}_{\varepsilon}\chi_{\Omega^{\mathcal{R}_{\varepsilon}\eta_{\varepsilon}}}\to & \ \mathbf{u}\quad\text{in}\ L^{2}\left(I;L^{2}\left(\mathbb{R}^{3}\right)\right)\\
\partial_{t}\eta_{\varepsilon}\to & \ \partial_{t}\eta\quad\text{in}\ L^{2}\left(I;L^{2}\left(\omega\right)\right)
\end{aligned}
\end{equation}

Since $\boldsymbol{\nu}^{\mathcal{R}_{\varepsilon}\eta_{\varepsilon}}\to\boldsymbol{\nu}^{\eta}$ in $L^{\infty}\left(I;W^{1,2}\left(\omega\right)\right)$, we can also pass to the weak limits in
\[\left(\mathbf{u}_{\varepsilon}\circ\phi_{\mathcal{R}_{\varepsilon}\eta_{\varepsilon}}\right)\cdot\boldsymbol{\nu}^{\mathcal{R}_{\varepsilon}\eta_{\varepsilon}}=\partial_{t}\eta_{\varepsilon}\mathbf{e}_{r}\cdot\boldsymbol{\nu}^{\mathcal{R}_{\varepsilon}\eta_{\varepsilon}}\]

to obtain that 
\begin{equation}
\left(\mathbf{u}\circ\phi_{\eta}\right)\cdot\boldsymbol{\nu}^{\eta}=\partial_{t}\eta\mathbf{e}_{r}\cdot\boldsymbol{\nu}^{\eta}\quad\text{a.e. in }I\times\omega.
\end{equation}

By letting $\varepsilon \to 0$ in \eqref{eqn:eps-weak-formulation} we see that we can identify the correct limits, but, as in the proof of Proposition~\ref{prop:existence-eps-reg-soln}, \emph{the limit formulation has to be valid for test functions $\left(\mathbf{q},\xi\right)\in\mathcal{T}^{\eta}$}. To this end, we use in \eqref{eqn:eps-weak-formulation} the test function
\[
\left(\mathbf{q},\xi\right):=\left(\mathcal{F}_{\mathcal{R}_{\varepsilon}\eta_{\varepsilon}}^{s}\left(\xi\right),\xi\right)\in\mathcal{T}^{\mathcal{R}_{\varepsilon}\eta_{\varepsilon}}. 
\]
However, we observe that for the convergence \[\int_{I}\int_{\Omega^{\mathcal{R}_{\varepsilon}\eta_{\varepsilon}}}\mathbb{D}\mathbf{u}_{\varepsilon}:\mathbb{D}\mathcal{F}_{\mathcal{R}_{\varepsilon}\eta_{\varepsilon}}^{s}\left(\xi\right)\to\int_{I}\int_{\Omega^{\eta}}\mathbb{D}\mathbf{u}_{}:\mathbb{D}\mathcal{F}_{\eta}^{s}\left(\xi\right)\]
since we merely have $\mathbb{D}\mathbf{u}_{\varepsilon}\rightharpoonup\mathbb{D}\mathbf{u}$ in $L^{2}_{t,x}$ we would need the strong convergence $\mathbb{D}\mathcal{F}_{\mathcal{R}_{\varepsilon}\eta_{\varepsilon}}^{s}\left(\xi\right)\to\mathbb{D}\mathcal{F}_{\eta}^{s}\left(\xi\right)$ in $L^{2}_{t,x}$. This suggests the strong convergence of the second gradients of $\eta_{\varepsilon}$, which is not guaranteed by using simply energy estimates. We need to use the equation.

In order to complete the proof, we need to establish

\begin{proposition}\label{prop:2nd-gradients-convergence}
    It holds that
\begin{equation}\label{eqn:claim-2nd-gradients-eta}
    \nabla^{2}\eta_{\varepsilon}\to\nabla^{2}\eta\quad\text{in}\ L^{2}\left(I;L^{2}\left(\omega\right)\right)
\end{equation}
\end{proposition} 
\begin{proof}
Since we already enjoy the weak convergence and we work in a Hilbert space, it suffices to prove that we have norm convergence, that is 
\begin{equation}\label{eqn:norm-2nd-grad}
\int_{I}\int_{\omega}\left|\nabla^{2}\eta_{\varepsilon}\right|dAds\to\int_{I}\int_{\omega}\left|\nabla^{2}\eta\right|dAds.
\end{equation}

To prove \eqref{eqn:norm-2nd-grad} let us observe that we can use the extension operator $\mathcal{F}_{\eta}$ from Proposition~\ref{prop:extension-no-slip} and use 
the test function
$\left(\mathbf{q},\xi\right)=\left(\mathcal{F}_{\mathcal{R}_{\varepsilon}\eta_{\varepsilon}}\left(\xi\right),\xi\right)
$ in \eqref{eqn:eps-weak-formulation}. 
This time, due to the better estimates of $\mathcal{F}$ compared to $\mathcal{F}^{s}$ we can let $\varepsilon \to 0$ to obtain
\begin{equation}\label{eqn:weak-limit-eps-1}
\begin{aligned}\int_{\Omega^{\eta}\left(t\right)}\mathbf{u}\cdot\mathcal{F}_{\eta}\left(\xi\right)\left(t\right)dx+\int_{0}^{t}\int_{\Omega^{\eta}\left(s\right)}-\mathbf{u}\cdot\partial_{t}\mathbf{\mathcal{F}_{\eta}\left(\xi\right)}+\mathbb{D}\mathbf{u}:\mathbb{D}\mathcal{F}_{\eta}\left(\xi\right)dxds & +\\
\int_{0}^{t}b\left(t,\mathbf{u},\mathbf{u},\mathcal{F}_{\eta}\left(\xi\right)\right)dt-\frac{1}{2}\int_{0}^{t}\int_{\Gamma^{\eta}\left(s\right)}\left(\mathbf{u}\cdot\mathcal{F}_{\eta}\left(\xi\right)\right)\left(\partial_{t}\eta\mathbf{e}_{r}\circ\phi_{\eta\left(t\right)}^{-1}\right)\cdot\boldsymbol{\nu}^{\eta}dA_{\eta}ds & +\\
\int_{\omega}\partial_{t}\eta\cdot\xi\left(t\right)dA+\int_{0}^{t}\int_{\omega}-\partial_{t}\eta\cdot\partial_{t}\xi+\nabla^{2}\eta:\nabla^{2}\xi dAds & =\\
\int_{0}^{t}\left\langle F\left(s\right),\mathcal{F}_{\eta}\left(\xi\right)\right\rangle ds+\int_{\Omega^{\eta}\left(0\right)}\mathbf{u}_{0}\cdot\mathcal{F}_{\eta}\left(\xi\right)\left(0\right)dx+\int_{\omega}\eta_{1}\xi\left(0\right)dA.
\end{aligned}
\end{equation}

In particular, taking  $\xi =\eta$ in \eqref{eqn:weak-limit-eps-1} provides
\begin{equation}\label{eqn:2nd-grad-eta}
\begin{aligned}-\int_{0}^{t}\int_{\omega}\left|\nabla^{2}\eta\right|^{2}= & \int_{\Omega^{\eta}\left(t\right)}\mathbf{u}\cdot\mathcal{F}_{\eta}\left(\eta\right)\left(t\right)+\int_{0}^{t}\int_{\Omega^{\eta}\left(s\right)}-\mathbf{u}\cdot\partial_{t}\mathbf{\mathcal{F}_{\eta}\left(\eta\right)}+\mathbb{D}\mathbf{u}:\mathbb{D}\mathcal{F}_{\eta}\left(\eta\right)+\\
 & \int_{0}^{t}b\left(s,\mathbf{u},\mathbf{u},\mathcal{F}_{\eta}\left(\eta\right)\right)-\frac{1}{2}\int_{0}^{t}\int_{\Gamma^{\eta}\left(s\right)}\left(\mathbf{u}\cdot\mathcal{F}_{\eta}\left(\eta\right)\right)\left(\partial_{t}\eta\mathbf{e}_{r}\circ\phi_{\eta}^{-1}\right)\cdot\boldsymbol{\nu}^{\eta}+\\
 & \int_{\omega}\partial_{t}\eta\cdot\eta\left(t\right)-\int_{0}^{t}\int_{\omega}\left(\partial_{t}\eta\right)^{2}-\int_{0}^{t}\left\langle F\left(s\right),\mathcal{F}_{\eta}\left(\eta\right)\right\rangle -\int_{\Omega^{\eta}\left(0\right)}\mathbf{u}_{0}\cdot\mathcal{F}_{\eta}\left(\eta\right)\left(0\right)-\int_{\omega}\eta_{1}\eta_{0}
\end{aligned}
\end{equation}
An analogous equality for $-\int_{0}^{t}\int_{\omega}\left|\nabla^{2}\eta_{\varepsilon}\right|$  is obtained 
by taking $\left(\mathbf{q},\xi\right)=\left(\mathcal{F}_{\mathcal{R}_{\varepsilon}\eta_{\varepsilon}}\left(\eta_{\varepsilon}\right),\eta_{\varepsilon}\right)$ in \eqref{eqn:eps-weak-formulation}. We need to prove the convergence of each corresponding term. We see that 
\begin{equation}
\int_{\Omega^{\mathcal{R}_{\varepsilon}\eta_{\varepsilon}}\left(t\right)}\mathbf{u}_{\varepsilon}\cdot\mathcal{F}_{\mathcal{R}_{\varepsilon}\eta_{\varepsilon}}\left(\eta_{\varepsilon}\right)\left(t\right)dx\to\int_{\Omega^{\eta}\left(t\right)}\mathbf{u}\cdot\mathcal{F}_{\eta}\left(\eta\right)\left(t\right)dx
\end{equation}
for a.e. $t\in I$ due to $\mathbf{u}_{\varepsilon}\rightharpoonup\mathbf{u}$ in $L_{t}^{\infty}L_{x}^{2}$ and $\mathcal{F}_{\mathcal{R}_{\varepsilon}\eta_{\varepsilon}}\left(\eta_{\varepsilon}\right)\to\mathcal{F}_{\eta}\left(\eta\right)$ in $C^{0}_{t,x}$.
Then \begin{equation}
\int_{I}\int_{\Omega^{\mathcal{R}_{\varepsilon}\eta_{\varepsilon}}\left(t\right)}\mathbf{u}_{\varepsilon}\cdot\partial_{t}\mathcal{F}_{\mathcal{R}_{\varepsilon}\eta_{\varepsilon}}\left(\eta_{\varepsilon}\right)dxdt\to\int_{I}\int_{\Omega^{\eta}\left(t\right)}\mathbf{u}\cdot\partial_{t}\mathcal{F}_{\eta}\left(\eta\right)dxdt
\end{equation}
can also be obtained by recalling Proposition~\ref{prop:extension-no-slip}. We also have that 
$\nabla\mathcal{F}_{\mathcal{R}_{\varepsilon}\eta_{\varepsilon}}\left(\eta_{\varepsilon}\right)\to\nabla\mathcal{F}_{\eta}\left(\eta\right)$ in $L_{t}^{\infty}W_{x}^{1,2}$. We can pass to the limit in each $\varepsilon$- term. This proves \eqref{eqn:norm-2nd-grad} and Proposition~\ref{prop:2nd-gradients-convergence}.
\end{proof}

Resuming now to the limiting procedure
we take
$\left(\mathbf{q},\xi\right):=\left(\mathcal{F}_{\mathcal{R}_{\varepsilon}\eta_{\varepsilon}}^{s}\left(\xi\right),\xi\right)\in\mathcal{T}^{\mathcal{R}_{\varepsilon}\eta_{\varepsilon}}$
in \eqref{eqn:eps-weak-formulation} and let $\varepsilon \to 0$
to obtain 
\begin{equation}\label{eqn:lim-weak-form}
\begin{aligned}\int_{\Omega^{\eta}\left(t\right)}\mathbf{u}\cdot\mathcal{F}_{\eta}^{s}\left(\xi\right)\left(t\right)dx+\int_{0}^{t}\int_{\Omega^{\eta}\left(s\right)}-\mathbf{u}\cdot\partial_{t}\mathcal{F}_{\eta}^{s}\left(\xi\right)+\mathbb{D}\mathbf{u}:\mathbb{D}\mathcal{F}_{\eta}^{s}\left(\xi\right)dxds & +\\
\int_{0}^{t}b\left(s,\mathbf{u},\mathbf{u},\mathcal{F}_{\eta}^{s}\left(\xi\right)\right)ds-\frac{1}{2}\int_{0}^{t}\int_{\Gamma^{\eta}\left(t\right)}\left(\mathbf{u}\cdot\mathcal{F}_{\eta}^{s}\left(\xi\right)\right)\left(\partial_{t}\eta\mathbf{e}_{r}\circ\phi_{\eta_{\varepsilon}\left(t\right)}^{-1}\right)\cdot\boldsymbol{\nu}^{\eta}dA_{\eta}ds & +\\
\frac{1}{\alpha}\int_{0}^{t}\int_{\omega}\left(\mathbf{u}_{\varepsilon}\circ\phi_{\eta}-\partial_{t}\eta\mathbf{e}_{r}\right)\cdot\left(\mathcal{F}_{\eta}^{s}\left(\xi\right)\circ\phi_{\eta}-\xi\mathbf{e}_{r}\right)J_{\eta}dAds & +\\
\int_{\omega}\partial_{t}\eta\cdot\xi\left(t\right)dA+\int_{0}^{t}\int_{\omega}-\partial_{t}\eta\cdot\partial_{t}\xi+\nabla^{2}\eta:\nabla^{2}\xi dAds & =\\
\int_{0}^{t}\left\langle F\left(t\right),\mathcal{F}_{\eta}^{s}\left(\xi\right)\right\rangle ds+\int_{\Omega^{\mathcal{R}_{\varepsilon}\eta_{\varepsilon}}\left(0\right)}\mathbf{u}_{0}\cdot\mathcal{F}_{\eta}^{s}\left(\xi\right)\left(0\right)dx+\int_{\omega}\eta_{1}\xi\left(0\right)dA.
\end{aligned}
\end{equation}
This means \eqref{eqn:lim-weak-form} is valid for all test functions of the form $\left(\mathcal{F}_{\eta}^{s}\left(\xi\right),\xi\right)\in\mathcal{T}^{\eta}$. 
We need to show that \eqref{eqn:lim-weak-form} is valid for  arbitrary test function $\left(\mathbf{q},\xi\right)\in\mathcal{T}^{\eta}$ . Let us assume without loss of generality that $\mathbf{q}$ is smooth in space. We need to prove the validity of \eqref{eqn:lim-weak-form} for $\left(\mathbf{q}-\mathcal{F_{\eta}}\left(\xi\right),0\right)\in\mathcal{T}^{\eta}$. Since $\text{tr}_{\eta}\left(\mathbf{q}-\mathcal{F_{\eta}}\left(\xi\right)\right)\cdot\boldsymbol{\nu}^{\eta}=0$ we consider $\mathbf{q}_{\varepsilon}:=\mathcal{J}_{\mathcal{R}_{\varepsilon}\eta_{\varepsilon}}\mathcal{J}_{\eta}^{-1}\left(\mathbf{q}-\mathcal{F_{\eta}}\left(\xi\right)\right)$
 We can see that $\text{div}\ \mathbf{q}_{\varepsilon}=0$ and that $\left(\mathbf{q}_{\varepsilon},0\right)\in\mathcal{T^{\mathcal{R}_{\varepsilon}\eta_{\varepsilon}}}$, by recalling the properties of the Piola transform from Lemma~\ref{lm:Piola}.
 Using \eqref{eqn:2nd-grad-eta} we can obtain that 
 \begin{equation}
\mathcal{J}_{\mathcal{R}_{\varepsilon}\eta_{\varepsilon}}\mathcal{J}_{\eta}^{-1}\left(\mathbf{q}-\mathcal{F_{\eta}}\left(\xi\right)\right)\to\mathcal{J}_{\eta}\mathcal{J}_{\eta}^{-1}\left(\mathbf{q}-\mathcal{F_{\eta}}\left(\xi\right)\right)=\mathbf{q}-\mathcal{F_{\eta}}\left(\xi\right)\quad\text{in}\ L_{t}^{2}H_{x}^{1}
 \end{equation}
which enables us to let $\varepsilon \to 0$ in \eqref{eqn:eps-weak-formulation} for the test function $\left(\mathbf{q}_{\varepsilon},0\right)$. We obtain that 
\begin{equation}\label{eqn:weak-limit-eps-2}
\begin{aligned}\int_{\Omega^{\eta}\left(t\right)}\mathbf{u}\cdot\left(\mathbf{q}-\mathcal{F}_{\eta}^{s}\left(\xi\right)\right)dx+\int_{0}^{t}\int_{\Omega^{\eta}\left(s\right)}-\mathbf{u}\cdot\partial_{t}\left(\mathbf{q}-\mathcal{F}_{\eta}^{s}\left(\xi\right)\right)dxds & +\\
\int_{0}^{t}\int_{\Omega^{\eta}\left(s\right)}\mathbb{D}\mathbf{u}:\mathbb{D}\left(\mathbf{q}-\mathcal{F}_{\eta}^{s}\left(\xi\right)\right)dxds+\int_{0}^{t}b\left(s,\mathbf{u},\mathbf{u},\mathbf{q}-\mathcal{F}_{\eta}^{s}\left(\xi\right)\right)ds & +\\
\int_{0}^{t}\int_{\Gamma^{\eta}\left(s\right)}-\frac{1}{2}\left(\mathbf{u}\cdot\left(\mathbf{q}-\mathcal{F}_{\eta}^{s}\left(\xi\right)\right)\right)\left(\partial_{t}\eta\mathbf{e}_{r}\circ\phi_{\eta_{\varepsilon}\left(t\right)}^{-1}\right)\cdot\boldsymbol{\nu}^{\eta\left(t\right)}dA_{\eta}ds & +\\
\frac{1}{\alpha}\int_{0}^{t}\int_{\omega}\left(\mathbf{u}_{\varepsilon}\circ\phi_{\eta}-\partial_{t}\eta\mathbf{e}_{r}\right)\cdot\left(\left(\mathbf{q}-\mathcal{F}_{\eta}^{s}\left(\xi\right)\right)\circ\phi_{\eta}-\xi\mathbf{e}_{r}\right)J_{\eta}dAds & +\\
\int_{0}^{t}\left\langle F\left(t\right),\mathbf{q}-\mathcal{F}_{\eta}^{s}\left(\xi\right)\right\rangle ds+\int_{\Omega^{\mathcal{R}_{\varepsilon}\eta_{\varepsilon}}\left(0\right)}\mathbf{u}_{0}\cdot\left(\mathbf{q}-\mathcal{F}_{\eta}^{s}\left(\xi\right)\right)\left(0\right)dx.
\end{aligned}
\end{equation}
By adding up \eqref{eqn:lim-weak-form} and \eqref{eqn:weak-limit-eps-2} we arrive at the conclusion: we can let $\varepsilon\to 0$ in Proposition~\ref{prop:existence-eps-reg-soln}. 

\subsection{Proof of Theorem~\ref{thm:main}}\label{ssec:proof-main-thm}
\begin{proof}
    We start by decoupling the problem as in Subsection~\ref{ssec:decoupled} and we prove the existence of a solution for the decoupled and linearized problem in Proposition~\ref{prop:dec-lin-problem}. We recover the coupling (and the nonlinearity) by the fixed argument from Subsection~\ref{ssec:set-v-fixed-point}. We obtain, therefore, that for every $\varepsilon>0$ there exists at least one solution for the regularized problem, as in Proposition~\ref{prop:existence-eps-reg-soln}. Finally, we let $\varepsilon\to0$ as explained in Subsection~\ref{ssec:eps-limit-passage} to conclude the proof.
\end{proof}

\section{Nonlinear Koiter shells}\label{sec:nonlinear-koiter}
The more general nonlinear Koiter elastic energies that we  present below, represents a popular and frequently used model in the literature on elasticity theory, see e.g. \cite{ciarlet2018nonlinear}, \cite[Part B]{ciarlet2000theory} and the references therein. This model generalizes the linear and simplified model presented in Subsection~\ref{ssec:problem}.
We now follow the presentation and notations of \cite[Section 2.1]{MS22}\footnote{This work was the first to study incompressible fluids interacting with nonlinear Koiter shells.}

With the same notations as above,  let us introduce:
\begin{itemize}
    \item The \emph{change of metric tensor} as 
    \begin{equation}
        \mathbb{G}\left(\eta\right):=\left[\begin{array}{cc}
\left(R+\eta\right)^{2}+\left(\partial_{\theta}\eta\right)^{2}-R^{2} & \partial_{\theta}\eta\partial_{z}\eta\\
\partial_{\theta}\eta\partial_{z}\eta & 1+\left(\partial_{z}\eta\right)^{2}
\end{array}\right]
    \end{equation}
\item The \emph{change of curvature tensor} as
\begin{equation}
    \mathbb{R}^{\sharp}\left(\eta\right):=\left[\begin{array}{cc}
\left(1+\frac{\eta}{R}\right)\partial_{\theta\theta}\eta-\frac{1}{R}\left(R+\eta\right)^{2}-\frac{2}{R}\left(\partial_{\theta}\eta\right)^{2}+R & \left(1+\frac{\eta}{R}\right)\partial_{\theta z}\eta-\frac{1}{R}\partial_{\theta}\eta\partial_{z}\eta\\
\left(1+\frac{\eta}{R}\right)\partial_{\theta z}\eta-\frac{1}{R}\partial_{\theta}\eta\partial_{z}\eta & \left(1+\frac{\eta}{R}\right)\partial_{zz}\eta
\end{array}\right]
\end{equation}
\item The \emph{nonlinear Koiter elastic energy} as
\begin{equation}\label{eqn:nonl-koiter}
K\left(\eta\right):=K\left(\eta, \eta \right):= \frac{h}{6}\int_{\omega}\mathcal{A}\mathbb{G}\left(\eta\right):\mathbb{G}\left(\eta\right)dA+\frac{h^{3}}{48}\int_{\omega}\mathcal{A}\mathbb{R}^{\sharp}\left(\eta\right):\mathbb{R}^{\sharp}\left(\eta\right)dA
\end{equation}
\end{itemize}
In \eqref{eqn:nonl-koiter} $h$ denotes the thickness of the shell,
while $\mathcal{A}$ denotes a fourth-order tensor whose entries are the contravariant components of the shell elasticity.
We refer to \cite[p.162]{Ciarlet05} and \cite{ciarlet-roquefort-2001} for further details about $\mathcal{A}$ and the model, where in particular it is shown that that $\mathcal{A}$ is coercive. By denoting \[\gamma\left(\eta \right):=1+\frac{\eta}{R}\] we see that
\begin{equation}
\left\Vert \eta\right\Vert _{L_{x}^{4}}^{4}+\left\Vert \nabla\eta\right\Vert _{L_{x}^{4}}^{4}+\left\Vert \gamma\left(\eta\right)\nabla^{2}\eta\right\Vert _{L_{x}^{2}}^{2}\lesssim K\left(\eta\right)
\end{equation}
which ensures the $H^{2}$ coercivity of $K$ provided that $\gamma\left(\eta \right) \neq 0$ which we ensure throughout the condition $\left\Vert \eta\right\Vert _{L_{x}^{\infty}}<R$. The $H^{2}$ estimates will be our main focus since in two dimensions because the first-order derivatives term can be estimated due to the embeddings $H^{2}\left(\omega\right)\hookrightarrow\hookrightarrow W^{1,4}\left(\omega\right)\hookrightarrow\hookrightarrow L^{\infty}\left(\omega\right)$.

The corresponding Euler-Lagrange equation follows \eqref{eqn:lame} and  reads
\begin{equation}\label{eqn:shell-koiter}
    \partial_{tt}\eta+K^{\prime}\left(\eta\right)=f
\end{equation}
with $K^{\prime}$ the corresponding Fr\'{e}chet derivative. The equation \eqref{eqn:shell-koiter} replaces \eqref{eqn:lame} in \eqref{eqn:FSI-system} and updates it to the following  system
\begin{equation}\label{eqn:nonl-system-fsi}
    \begin{cases}
\partial_{t}\mathbf{u}+\left(\mathbf{u}\cdot\nabla\right)\mathbf{u}=\operatorname{div}\sigma & \text{in}\ I\times\Omega^{\eta}\\
\text{div}\mathbf{u}=0 & \text{in}\ I\times\Omega^{\eta}\\
\partial_{tt}\eta+K^{\prime}\left(\eta\right)=f & \text{on}\ I\times\omega\\
\left|\eta\right|=\left|\nabla\eta\right|=0 & \text{on}\ I\times\partial\omega\\
\frac{1}{2}\left|\mathbf{u}\right|^{2}+p=P(t):=P_{in/out}\left(t\right),\ \mathbf{u}\cdot\boldsymbol{\tau}_{1,2}=0 & \text{on}\ I\times\Gamma_{in/out}\\
\left(\partial_{t}\eta\mathbf{e}_{r}-\mathbf{u}\left(t,\phi_{\eta\left(t\right)}\right)\right)\cdot\boldsymbol{\nu}^{\eta\left(t\right)}=0 & \text{on}\ I\times\omega\\
\left(\partial_{t}\eta\mathbf{e}_{r}-\mathbf{u}\left(t,\phi_{\eta\left(t\right)}\right)-\alpha\ \sigma\left(\phi_{\eta\left(t\right)}\right)\boldsymbol{\nu}^{\eta}\left(t,x\right)\right)\cdot\boldsymbol{\tau}_{1,2}^{\eta}=0 & \text{on}\ I\times\omega\\
\mathbf{u}\left(0,\cdot\right)=\mathbf{u}_{0}, & \text{in \ensuremath{\Omega^{\eta_{0}}}}\\
\eta\left(0,\cdot\right)=\eta_{0},\ \partial_{t}\eta\left(0,\cdot\right)=\eta_{1} & \text{in}\ \omega
\end{cases}
\end{equation}
In order to study the existence of weak solution for \eqref{eqn:nonl-system-fsi} several additional challenges appear, compared to its linearized version \eqref{eqn:FSI-system}, due to the nonlinear effect of $K(\eta)$.

We list below its most important changes:
\begin{itemize}
    \item In the new energy of the system,  the term  $\frac{1}{2}\int_{\omega}\left|\nabla^{2}\eta\right|^{2}dA$ in \eqref{eqn:energy-E-D-Eslip} is replaced with $\frac{1}{2}K\left(\eta\right)$ and thus the new energy, corresponding to \eqref{eqn:nonl-system-fsi} is
    \begin{equation}
      \tilde{E}\left(t\right):=\frac{1}{2}\int_{\Omega^{\eta}\left(t\right)}\left|\mathbf{u}\left(t,x\right)\right|^{2}dx+\frac{1}{2}\int_{\omega}\left|\partial_{t}\eta\right|^{2}dA+\frac{1}{2}K\left(\eta\right).
    \end{equation}
    
\item In the weak formulation the term $\int_{\omega}\nabla^{2}\eta:\nabla^{2}\xi dA$ is replaced by 
\begin{equation}\label{eqn:K-eta-xi}
    K\left(\eta,\xi\right):=\frac{h}{6}\int_{\omega}\mathcal{A}\mathbb{G}\left(\eta\right):\mathbb{G}^{\prime}\left(\eta\right)\xi dA+\frac{h^{3}}{48}\int_{\omega}\mathcal{A}\mathbb{R}^{\sharp}\left(\eta\right):\left(\mathbb{R}^{\sharp}\right)^{\prime}\left(\eta\right)\xi dA
\end{equation}
for $\xi \in H_{0}^{2}(\omega)$. By $\mathbb{G}^{\prime},\left(\mathbb{R}^{\sharp}\right)^{\prime}$ we denote the Fr\'{e}chet derivatives of $\mathbb{G},\mathbb{R}^{\sharp}$ respectively.
\end{itemize}
With this  we can now formulate the analogous result of Theorem~\ref{thm:main}, namely
\begin{theorem}\label{thm:nonl-shell-sect5}
 In the same hypothesis as in Theorem~\ref{thm:main}, the system \eqref{eqn:nonl-system-fsi} admits at least one weak solution, defined by Definition~\ref{def:weak-soln}.
\end{theorem}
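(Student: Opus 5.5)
The proof follows the scheme of Section~\ref{sec:proof-main} step by step, replacing the bilinear term $\int_{\omega}\nabla^{2}\eta:\nabla^{2}\xi\,dA$ by $K(\eta,\xi)$ from \eqref{eqn:K-eta-xi}. We only track where the nonlinearity of $K$ enters.

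\textbf{A priori estimates.} Testing with $(\mathbf{u},\partial_{t}\eta)$ gives $K(\eta,\partial_{t}\eta)=\frac{1}{2}\frac{d}{dt}K(\eta)$, so \eqref{eqn:energy-balance} holds with $E$ replaced by $\tilde{E}$. As long as $\Vert\eta\Vert_{L^{\infty}}\le M<R$, we have $\gamma(\eta)\ge 1-M/R>0$. The coercivity bound then yields $\sup_{t}\Vert\eta\Vert_{H^{2}}\lesssim \tilde{E}(0)+\Vert P\Vert_{L^{2}_{t}}^{2}$. Hence all consequences of \eqref{eqn:energy} used before remain available: Hölder continuity of $\eta$, Korn via Proposition~\ref{prop:korn}, the trace Lemma~\ref{lm:trace}, and the Piola/extension bounds. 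The time $T_{\star}$ is defined exactly as before, through the Hölder bound keeping $\Vert\eta\Vert_{L^{\infty}}<R$.

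\textbf{Approximation.} The decoupled problem is no longer linear in $\eta$, which destroys the convexity required in Theorem~\ref{thm:kakutani}. I would therefore linearize the shell operator as well, around the given $\delta$. Concretely, I replace $K(\eta,\xi)$ by $K_{\delta}(\eta,\xi)$, obtained by freezing the lower-order coefficients at $\mathcal{R}_{\varepsilon}\delta$. The most natural choice is to keep $\mathcal{A}\,\gamma(\delta)^{2}\nabla^{2}\eta:\nabla^{2}\xi$ as the leading part and to put all lower-order nonlinear terms (quadratic in $\nabla\delta$, etc.) as a forcing evaluated at $\delta$.

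This choice loses the exact energy identity, so I would instead use a monotone-style Galerkin variant with the energy computed along the solution. Alternatively, following \cite{MS22}, I would include the shell unknown in the fixed point with $\delta$ and treat $K'(\eta)$ directly in the Galerkin ODE. The ODE is then locally Lipschitz, and the energy identity $\frac{d}{dt}\tilde{E}_{n}+\dots$ holds exactly. The fixed point is then taken only in the geometry and the convective field, and convexity of $F(\delta,\mathbf{v})$ is obtained by selecting the solution set of a problem whose only nonlinearity is in $\eta$. If convexity fails, I would replace Kakutani by a Schauder argument applied to a single-valued selection, which the Lipschitz Galerkin ODE gives at the finite-dimensional level, and pass $n\to\infty$ after the fixed point.

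\textbf{Limit passages and the main obstacle.} Proposition~\ref{prop:compactness} carries over verbatim, since it only uses the energy bound. The compact embedding \eqref{eqn:comp-emd-AL} gives $\eta_{\varepsilon}\to\eta$ in $C(\bar I;W^{1,q})$ for every $q<\infty$. With this, every term of $K(\eta_{\varepsilon},\xi)$ converges except those quadratic in $\nabla^{2}\eta_{\varepsilon}$ coming from $\mathbb{R}^{\sharp}$. For example, $\mathcal{A}\gamma\nabla^{2}\eta_{\varepsilon}:\gamma'\,\xi\,\nabla^{2}\eta_{\varepsilon}$ arises because $(\mathbb{R}^{\sharp})'(\eta)\xi$ contains $\xi\,\partial^{2}\eta/R$.

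The key step is therefore Proposition~\ref{prop:2nd-gradients-convergence} in the nonlinear setting, namely $\gamma(\eta_{\varepsilon})\nabla^{2}\eta_{\varepsilon}\to\gamma(\eta)\nabla^{2}\eta$ strongly in $L^{2}_{t,x}$. I would prove it as before. Test \eqref{eqn:eps-weak-formulation} with $(\mathcal{F}_{\mathcal{R}_{\varepsilon}\eta_{\varepsilon}}(\eta_{\varepsilon}),\eta_{\varepsilon})$, and the limit formulation with $(\mathcal{F}_{\eta}(\eta),\eta)$, both valid by Proposition~\ref{prop:extension-no-slip}. This yields $\int_{0}^{t}K(\eta_{\varepsilon},\eta_{\varepsilon})\to\int_{0}^{t}K(\eta,\eta)$, because all the fluid terms converge exactly as in the linear proof.

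Two points need care here. First, the limit identity involves $\overline{K(\eta_{\varepsilon},\cdot)}$, a weak limit, so I would use a Minty/monotonicity trick. The principal part $\xi\mapsto\int\mathcal{A}\gamma(\eta)^{2}\nabla^{2}\eta:\nabla^{2}\xi$ is uniformly monotone in $\nabla^{2}$ with coefficients converging strongly. One then shows $\limsup\int\mathcal{A}\gamma_{\varepsilon}\nabla^{2}(\eta_{\varepsilon}-\eta):\gamma_{\varepsilon}\nabla^{2}(\eta_{\varepsilon}-\eta)\le0$, the lower-order and cubic-in-$\nabla\eta$ terms converging by strong $W^{1,q}$ compactness. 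Coercivity of $\mathcal{A}$ and $\gamma\ge 1-M/R$ then give the strong convergence of $\nabla^{2}\eta_{\varepsilon}$. Second, the same step is needed one level earlier, in the closed-graph property of the fixed-point map; there it is easier thanks to $\mathcal{R}_{\varepsilon}$.

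With strong $H^{2}$ convergence in hand, $K(\eta_{\varepsilon},\xi)\to K(\eta,\xi)$ and $\mathbb{D}\mathcal{F}^{s}_{\mathcal{R}_{\varepsilon}\eta_{\varepsilon}}(\xi)\to\mathbb{D}\mathcal{F}^{s}_{\eta}(\xi)$ follow. The remaining density argument with $\mathcal{J}_{\mathcal{R}_{\varepsilon}\eta_{\varepsilon}}\mathcal{J}_{\eta}^{-1}$ is then unchanged.
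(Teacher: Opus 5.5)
Your overall architecture is the paper's. You use the energy bound with $\tilde{E}$ and $H^{2}$-coercivity from $\gamma(\eta)\ge 1-M/R$, and you linearize the shell operator around the prescribed geometry so that Theorem~\ref{thm:kakutani} still applies. Proposition~\ref{prop:compactness} is used unchanged, and strong $L^{2}_{t,x}$ convergence of $\nabla^{2}\eta_{\varepsilon}$ is the only genuinely new ingredient of the $\varepsilon\to0$ limit.

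The real difference is how you obtain that convergence. The paper simply invokes either the $L^{2}_{t}H^{2+s}_{x}$ bound of \cite{MS22} or its own Proposition~\ref{prop:2nd-gradients-convergence}. But the latter is proved for the bilinear form $\int_{\omega}\nabla^{2}\eta:\nabla^{2}\xi\,dA$, where the limit identity obtained with $(\mathcal{F}_{\eta}(\xi),\xi)$ automatically contains the correct shell term. For the Koiter operator it does not. Since $(\mathbb{R}^{\sharp})'(\eta_{\varepsilon})\xi$ contains $\frac{\xi}{R}\nabla^{2}\eta_{\varepsilon}$, $K(\eta_{\varepsilon},\xi)$ is quadratic in $\nabla^{2}\eta_{\varepsilon}$, and the limit identity only contains a weak limit $\overline{K}$.

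Your Minty step fills exactly this hole, and it works:
\begin{itemize}
\item The two tested identities give $\int_{0}^{t}K(\eta_{\varepsilon},\eta_{\varepsilon}-\eta)\to0$.
\item The term $\mathcal{A}\mathbb{R}^{\sharp}(\eta_{\varepsilon}):\frac{\eta_{\varepsilon}-\eta}{R}\nabla^{2}\eta_{\varepsilon}$ vanishes by uniform convergence, and the lower-order terms vanish by $W^{1,q}$-compactness.
\item After subtracting the weakly vanishing term with $\nabla^{2}\eta$ in the first slot, what remains is the coercive quantity $\int\mathcal{A}\gamma(\eta_{\varepsilon})\nabla^{2}(\eta_{\varepsilon}-\eta):\gamma(\eta_{\varepsilon})\nabla^{2}(\eta_{\varepsilon}-\eta)$.
\item Having the coefficients at $\mathcal{R}_{\varepsilon}\eta_{\varepsilon}$ in the approximate operator does not change this decomposition.
\end{itemize}
Compared with \cite{MS22}, your route needs no fractional difference quotients compatible with the slip extension, but it gives only strong convergence, not extra regularity.

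In the approximation step you should stop hedging. Any shell operator affine in $\eta$ with $\delta$-dependent coefficients works. That includes your first choice, and also the paper's bilinear tensors $\mathbb{G}_{\delta}(\eta)$, $\mathbb{R}^{\sharp}_{\delta}(\eta)$, which reduce to $\mathbb{G}(\eta)$, $\mathbb{R}^{\sharp}(\eta)$ at $\delta=\eta$ and have principal part $\gamma(\delta)\nabla^{2}\eta$. Affinity keeps $F(\delta,\mathbf{v})$ convex. It also makes your closed-graph remark unnecessary: with smooth, uniformly converging coefficients at $\mathcal{R}_{\varepsilon}\delta_{n}$, weak convergence of $\nabla^{2}\eta_{n}$ suffices.

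The alternative of keeping $K'(\eta)$ in the decoupled problem is what fails. The solution set of a problem nonlinear in $\eta$ is not convex in general, so ``selecting the solution set of a problem whose only nonlinearity is in $\eta$'' gives no convexity for Theorem~\ref{thm:kakutani}. A Schauder argument on a selection needs a continuous single-valued map, which you only have at the Galerkin level. That would mean placing the fixed point inside a Galerkin scheme whose basis depends on $\delta$, which you have not set up.

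Your concern that freezing coefficients loses the exact energy identity is legitimate. It affects the paper's linearization just as much: with coefficients at $\mathcal{R}_{\varepsilon}\eta_{\varepsilon}$ one does not recover $K$ exactly at the fixed point, and the paper's sketch does not explain the $\varepsilon$-uniform $H^{2}$ bound. This has to be handled by an estimate, not by giving up linearity.
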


\begin{proof} The proof is mostly based on the proof of Theorem~\ref{thm:main} and therefore follows the steps presented during Section~\ref{sec:proof-main}.
    We explain the analogies and the adjustments that need to be made.
\paragraph{The decoupled and linearized problem.} We refer to  Subsection~\ref{ssec:decoupled} for the formulation. The decoupling procedure is almost the same, with the most important modification concerns the linearization done in Subection~\ref{ssec:decoupled} as $K^\prime(\eta)$ is an additional nonlinearity. The key observation is that, since the main focus in on the $H^{2}$ estimates, we should linearize the term $K(\eta,\xi)$ in the weak formulation as follows: 
    the change of metric tensor is updated to its linearized version 
    \begin{equation}\label{eqn:chg-metric-lin}
        \mathbb{G}_{\delta}\left(\eta\right):=\left[\begin{array}{cc}
\left(R+\delta\right)\left(R+\eta\right)+\partial_{\theta}\delta\partial_{\theta}\eta-R^{2} & \frac{\partial_{\theta}\delta\partial_{z}\eta+\partial_{\theta}\eta\partial_{z}\delta}{2}\\
\frac{\partial_{\theta}\delta\partial_{z}\eta+\partial_{\theta}\eta\partial_{z}\delta}{2} & 1+\partial_{z}\delta\partial_{z}\eta
\end{array}\right]
    \end{equation}
and the linearized change of curvature tensor is given by

\begin{equation}\label{eqn:chg-curv-lin}
    \mathbb{R}_{\delta}^{\sharp}\left(\eta\right):=\left[\begin{array}{cc}
\left(1+\frac{\delta}{R}\right)\partial_{\theta\theta}\eta-\frac{1}{R}\left(R+\eta\right)\left(R+\delta\right)-\frac{2}{R}\partial_{\theta}\delta\partial_{\theta}\eta+R & \left(1+\frac{\delta}{R}\right)\partial_{\theta z}\eta-\frac{1}{R}\frac{\partial_{\theta}\delta\partial_{z}\eta+\partial_{\theta}\eta\partial_{z}\delta}{2}\\
\left(1+\frac{\delta}{R}\right)\partial_{\theta z}\eta-\frac{1}{R}\frac{\partial_{\theta}\delta\partial_{z}\eta+\partial_{\theta}\eta\partial_{z}\delta}{2} & \left(1+\frac{\delta}{R}\right)\partial_{zz}\eta
\end{array}\right]
\end{equation}
    We assemble them to define the Koiter energy (associated to the lineraized tensors) as  
    \begin{equation}
        K_{\delta}\left(\eta\right):=\frac{h}{6}\int_{\omega}\mathcal{A}\mathbb{G}_{\delta}\left(\eta\right):\mathbb{G}_{\delta}\left(\eta\right)dA+\frac{h^{3}}{48}\int_{\omega}\mathcal{A}\mathbb{R}_{\delta}^{\sharp}\left(\eta\right):\mathbb{R}_{\delta}^{\sharp}\left(\eta\right)dA.
    \end{equation}
We point out that we preserve the $H^{2}$ estimates of $\eta$ since the term $\gamma(\eta)=1+\frac{\eta}{R}$ appearing in \eqref{eqn:chg-curv-lin} is replaced by $\gamma(\delta)$ for $\delta$'s such that $\left\Vert \delta\right\Vert _{L_{t,x}^{\infty}}<R$ which ensure that $\gamma\left(\delta\right)>0$.
The remaining terms, containing lower-order derivatives of $\eta$ can be estimated by $\left\Vert \eta\right\Vert _{H_{x}^{2}}$ and the appropriate norms of $\delta$ --which is given and smooth as in Subsection~\ref{ssec:decoupled}.

With this observation, the construction of the decoupled and regularized solution follows in the same manner.
\paragraph{The fixed-point and the limiting process.} We recall Subsections~\ref{ssec:set-v-fixed-point} and ~\ref{ssec:eps-limit-passage}. The linearization as above is important as the set-valued Theorem~\ref{thm:kakutani} requires convexity. The fixed-point $\delta \mapsto \eta$ can be performed without additional difficulties. We recover $K_{\delta}\left(\eta\right)=K\left(\eta\right)$ and $K_{\delta}\left(\eta,\xi\right)=K\left(\eta,\xi\right)$, recall here \eqref{eqn:nonl-koiter} and \eqref{eqn:K-eta-xi}. Concerning the limiting procedure $\varepsilon \to 0$ as in Subsection~\ref{ssec:compactness}, in order to pass to the limit in the term $K\left(\eta_{\varepsilon},\xi\right)$ we see, following \cite[Section 6]{MS22} that the most tedious terms are the ones involving $\gamma\left(\eta_{\varepsilon} \right)\nabla^{2}\eta_{\varepsilon}$, as the limit in the lower order terms can be justified by an application of the Aubin-Lions lemma, using only energy estimates, see~\eqref{eqn:comp-emd-AL}. To pass to the limit in the terms containing $\left(1+\frac{\eta_{\varepsilon}}{R}\right)\nabla^{2}\eta_{\varepsilon}$ a crucial observation of \cite{MS22} was that the following additional estimate can be established:
\begin{equation}
    \sup_{\varepsilon>0}\left(\left\Vert \eta_{\varepsilon}\right\Vert _{L_{t}^{2}H_{x}^{2+s}}+\left\Vert \partial_{t}\eta_{\varepsilon}\right\Vert _{L_{t}^{2}H_{x}^{s}}\right)<\infty,\quad s\in\left(0,\frac{1}{2}\right).
\end{equation}
These improved estimates are  a consequence of the equations and the use of  appropriate test functions involving difference quotients of fractional type-- see \cite[Theorem 1.2]{MS22} and its proof.
These new estimates can be proved in our situation, too, using the extension operator $\mathcal{F}_{\eta}$ from Proposition~\ref{prop:extension-no-slip}. 
We have proposed in Proposition~\ref{prop:2nd-gradients-convergence} a more direct approach. In any case it holds that $\nabla^{2}\eta_{\varepsilon}\to\nabla^{2}\eta$ in $L^{2}_{t}L^{2}_{x}$ so $\gamma\left(\eta_{\varepsilon}\right)\nabla^{2}\eta_{\varepsilon}\to\gamma\left(\eta\right)\nabla^{2}\eta$ in the same space and  we obtain that $K\left(\eta_{\varepsilon},\xi\right)\to K\left(\eta,\xi\right)$. This limit passage is the final step in proving Theorem~\ref{thm:main} and also in the proof of Theorem~\ref{thm:main-nonlinear}.
\end{proof}

\paragraph{Acknowledgements.} A.R is supported by the Grant RYC2022-036183-I funded by 
MICIU/AEI/10.13039/501100011033 
and by ESF+. A.R, C.M have been partially supported by the Basque Government through the BERC 2022-2025 program and by the Spanish State Research Agency through BCAM Severo Ochoa CEX2021-001142-S and through project PID2023-146764NB-I00 funded by MICIU/AEI/10.13039/501100011033 and cofunded by the European Union.
\paragraph{Data Availability Statement.} Our manuscript has no available data.
\bibliographystyle{plain}
\bibliography{bibliography} 
\medskip
\textbf{Claudiu M\^{i}ndril\u{a}} \\
Basque Center for Applied Mathematics (BCAM), 
Alameda de Mazarredo 14, 48009 Bilbao, Spain \\
Email: \href{mailto:cmindrila@bcamath.org}{cmindrila@bcamath.org}

\vspace{1em}
\noindent
\textbf{Arnab Roy} \\
Basque Center for Applied Mathematics (BCAM), 
Alameda de Mazarredo 14, 48009 Bilbao, Spain \\
IKERBASQUE, Basque Foundation for Science, 
Plaza Euskadi 5, 48009 Bilbao, Bizkaia, Spain\\
Email: \href{mailto:aroy@bcamath.org}{aroy@bcamath.org}

\end{document}